\documentclass[10pt,twoside]{homg3} 


\usepackage{amssymb}
\usepackage{amsmath}
\usepackage{amsfonts}
\usepackage{graphicx}
\usepackage{tikz}
\usetikzlibrary{arrows}

\newcommand{\image}{{\mathrm{Im\,}}}

\newtheorem{proposition}[theorem]{Proposition}
\newtheorem{example}[theorem]{Example}
\newtheorem{notation}[theorem]{Notation}

\setcounter{PagUno}{1}


\begin{document}

\tittle{The Contact Structure in the Space of Light Rays\hspace{-1mm}\thanks{This work has been partially supported by the Spanish MINECO project MTM2014-54692 and QUITEMAD+, S2013/ICE-2801.}}

\def\authors{\aaa{Alfredo {\surname BAUTISTA}$\, ^{(a)}$,
\ Alberto {\surname IBORT}$\, ^{(a)}$
\ and  Javier {\surname LAFUENTE}$\, ^{(b)}$
}}

\def\direc{\address{
$(a)$ Departmento de Matem\'aticas\\
Universidad Carlos III de Madrid\\
28040 Madrid, Spain \\
abautist@math.uc3m.es, albertoi@math.uc3m.es \\ \vskip .2cm
$(b)$ Departmento de Geometr\'{\i}a y Topolog\'{\i}a \\
Universidad  Complutense de
Madrid \\
28040 Madrid, Spain\\
lafuente@mat.ucm.es}}
\maketitle

\begin{center}
{\em Dedicado a Jose Mar\'{\i}a Montesinos con motivo de su jubilaci\'on.}
\end{center}


\begin{abstract}
\noindent The natural topological, differentiable and geometrical structures on the space of light rays of a given spacetime are discussed.   The relation between the causality properties of the original spacetime and the natural structures on the space of light rays are stressed.   Finally, a symplectic geometrical approach to the construction of the canonical contact structure on the space of light rays is offered.
\end{abstract}

\MSC{{53C50, 53D35, 58A30.}}  

\keywords{{causal structure, strongly causal spacetime, null geodesic, light rays, contact structures}.}
\par
\medskip


\section{Introduction}
In the recent articles \cite{Ba14,Ba15} it was shown that causality relations on spacetimes can be described alternatively in terms of the geometry and topology of the space of light rays and skies.   This alternative description of causality, whose origin can be traced back to Penrose, was pushed forward by R. Low \cite{Lo88,Lo06} and, as indicate above, largely accomplished in the referred works by Bautista, Ibort and Lafuente.

Shifting the point of view from ``events'' to ``light rays'' and ``skies''  to analyze causality relations has deep and worth discussing implications.    Thus, for instance, as Low himself noticed \cite{Lo90, Lo94}, in some instances, two events are causally related iff the corresponding skies are topologically linked and, a more precise statement of this fact, constitutes the so called Legendrian Low's conjecture (see for instance \cite{Na04, Ch10}).

The existence of a canonical contact structure on the space of light rays plays a cornerstone role in this picture. Actually it was shown in \cite{Ba15} that two events on a strongly causal spacetime are causally related iff there exists a non-negative sky Legendrian isotopy relating their corresponding skies.

Moreover, and as an extension of Penrose's twistor programme, it would be natural to describe attributes of the conformal class of the Lorentzian metric such as the Weyl tensor, in terms of geometrical structures on the space of light rays.

It is also worth to point out that in dimension 3 this dual approach to causality becomes very special.  Actually if the dimension of spacetime is $m= 3$, the dimension of the space of light rays is also $2m-3 = 3$ and the skies are just Legendrian circles.  Even more because the dimension of the contact distribution is 2 the space of 1-dimensional subspaces is the 1-dimensional projective space $\mathbb{RP}^1$, hence the curve defined by the tangent spaces to the skies along the points of a light ray defines a projective segment of it and it is possible to define Low's causal boundary \cite{Lo06} unambiguously.    We will not dwell into these matters in the present paper and we will leave it to a detailed discussion elsewhere. (see also A. Bautista Ph. D. Thesis \cite{Ba15b}).

Because of all these reasons we have found relevant to describe in a consistent and uniform way the fundamental structures present on the space of light rays of a given spacetime: that is, its topological, differentiable and geometrical structures, the later one, exemplified by its canonical contact structure.
Thus, the present work will address in a sistematic and elementary way the description and construction of the aforementioned structures, highlighting the interplay between the causility properties of the original spacetime and the structures of the corresponding space of light rays.

The paper will be organized as follows.  Section 2 will be devoted to introduce properly the space of light rays and in Sect. 3 its natural differentiable structure will be constructed.   The tangent bundle of the space of light rays and the canonical contact structure on the space of light rays will be the subject of Sects. 4 and 5.

Looking for brevity in the present work, because a self--contained article will become a very long text, we suggest to the readers references \cite{On83}, \cite{BE96}, \cite{HE73}, \cite{Mi08}  and \cite{Pe79} in order to review the basic elements of causality theory in Lorentzian manifolds.


\section{The space $\mathcal{N}$ of light rays and its differentiable structure} \label{sec:seccion-estruc-dif}

\subsection{Constructions of the space $\mathcal{N}$} 
Given a spacetime $\left(M,\mathbf{g}\right)$, i.e., a Hausdoff, time-oriented, Lorentzian smooth manifold, we will denote by $\mathbb{N}$ the subset of all null vectors of $\widehat{T}M$, where in what follows, $\widehat{T}M$ will be used to make reference to the bundle resulting of eliminating the zero section of $TM$, that is $\widehat{T}M=\left\{ v\in TM:v\neq 0 \right\}$.
The zero section of $TM$ separates both connected components of $\mathbb{N}$ that will be denoted by
\begin{equation*}
\mathbb{N}^+=\left\{ v\in \mathbb{N}: v \text{ future }\right\} \, , \qquad \mathbb{N}^-=\left\{ v\in \mathbb{N}: v \text{ past }\right\} \, .
\end{equation*}%
We will call the fibres $\mathbb{N}_p$, $\mathbb{N}^+_p$ and $\mathbb{N}^-_p$ \emph{lightcone}\index{lightcone}, \emph{future lightcone}\index{lightcone!future} and \emph{past lightcone}\index{lightcone!past} at $p\in M$ respectively.     We define the \emph{set of light rays} of $\left(M,\mathbf{g}\right)$ by
\[
\mathcal{N}_{\mathbf{g}}=\left\{ \image \left(\gamma\right)\subset M:\gamma \text{ is a maximal null geodesic in }\left(M,\mathbf{g}\right) \right\}
\]
where $\image \left(\gamma\right)$ denotes the image of the curve $\gamma$. This definition seems to depend on the metric $\mathbf{g}$, however we will show that the space of light rays depends only on the conformal class of the spacetime metric.

We define the \emph{conformal class of metrics in $M$ equivalent to $\mathbf{g}$}\index{conformal class!of metrics}  by
\[
\mathcal{C}_{\mathbf{g}}=\left\{ \overline{\mathbf{g}}\in \mathfrak{T}_0^2\left(M\right): \overline{\mathbf{g}}=e^{2\sigma}\mathbf{g},\hspace{2mm}0<\sigma\in\mathfrak{F}\left(M\right) \right\}
\]
and we call $\left(M,\mathcal{C}_{\mathbf{g}}\right)$ the corresponding \emph{conformal class of spacetimes equivalent to $\left(M,\mathbf{g}\right)$}\index{conformal class!of spacetimes}.

It is known that two metric are conformally equivalent if the lightcones coincide at every point (see \cite[Prop. 2.6 and Lem. 2.7]{Mi08} or \cite[p. 60-61]{HE73}). This fact can be automatically translated to the spaces of light rays defined by two different metrics on a manifold $M$. The following proposition brings to light the equivalence among spaces of light rays. 

\begin{proposition}\label{prop00100}
Let $\left( M,\mathbf{g}\right)$ and $\left( M,\overline{\mathbf{g}}\right)$ be two spacetimes and let  $\mathcal{N}_{\mathbf{g}}$ and $\mathcal{N}_{\overline{\mathbf{g}}}$ be their corresponding spaces of light rays.
Then $\left( M,\mathbf{g}\right)$ and $\left( M,\overline{\mathbf{g}}\right)$ are conformally equivalent if and only if $\mathcal{N}_{\mathbf{g}} = \mathcal{N}_{\overline{\mathbf{g}}}$.
\end{proposition}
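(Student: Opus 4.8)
\medskip
\noindent\emph{Proof strategy.}\quad I would prove the two implications separately.

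For ``conformally equivalent $\Rightarrow \mathcal{N}_{\mathbf{g}}=\mathcal{N}_{\overline{\mathbf{g}}}$'', write $\overline{\mathbf{g}}=e^{2\sigma}\mathbf{g}$ with $\sigma$ smooth and use the transformation rule for the Levi--Civita connections, $\overline\nabla_X Y=\nabla_X Y+(X\sigma)\,Y+(Y\sigma)\,X-\mathbf{g}(X,Y)\,\mathrm{grad}_{\mathbf{g}}\sigma$. Evaluated on $X=Y=\dot\gamma$ with $\gamma$ a $\mathbf{g}$-null geodesic, the last term vanishes because $\mathbf{g}(\dot\gamma,\dot\gamma)=0$, giving $\overline\nabla_{\dot\gamma}\dot\gamma=2(\dot\gamma\sigma)\,\dot\gamma$; thus $\gamma$ is a $\overline{\mathbf{g}}$-null pregeodesic, and a reparametrisation of $\gamma$ is a $\overline{\mathbf{g}}$-null geodesic with the very same image (nullity itself being conformally invariant since $\overline{\mathbf{g}}(v,v)=e^{2\sigma}\mathbf{g}(v,v)$). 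The relation is symmetric ($\mathbf{g}=e^{-2\sigma}\overline{\mathbf{g}}$), and inextensibility is preserved under reparametrisation: if a reparametrised maximal $\mathbf{g}$-null geodesic admitted a proper $\overline{\mathbf{g}}$-geodesic extension, reparametrising that extension back would properly extend the original, a contradiction. Hence the images of maximal null geodesics of the two metrics coincide, i.e. $\mathcal{N}_{\mathbf{g}}=\mathcal{N}_{\overline{\mathbf{g}}}$.

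For the converse I would reduce to the cited fact that two Lorentzian metrics on $M$ sharing the lightcone $\mathbb{N}_p$ at every point $p$ are conformally equivalent (\cite[Prop.~2.6, Lem.~2.7]{Mi08}); it therefore suffices to show $\mathbb{N}_p^{\mathbf{g}}=\mathbb{N}_p^{\overline{\mathbf{g}}}$ for all $p$, and by symmetry that every $\mathbf{g}$-null direction at $p$ is $\overline{\mathbf{g}}$-null. Given $0\neq v\in\mathbb{N}_p^{\mathbf{g}}$, let $\gamma$ be the maximal $\mathbf{g}$-null geodesic with $\gamma(0)=p$, $\dot\gamma(0)=v$; then $\ell:=\image\gamma\in\mathcal{N}_{\mathbf{g}}=\mathcal{N}_{\overline{\mathbf{g}}}$, so $\ell=\image\overline\gamma$ for some maximal $\overline{\mathbf{g}}$-null geodesic $\overline\gamma$. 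For small $\varepsilon$ the arc $A:=\gamma((-\varepsilon,\varepsilon))$ is an embedded curve through $p$ with tangent line $\R v$ at $p$. If I can show that the tangent line to $\ell$ at $p$ read off from the second parametrisation $\overline\gamma$ equals $\R v$, then, that line being $\overline{\mathbf{g}}$-null, the claim follows; letting $v$ and $p$ vary gives $\mathbb{N}_p^{\mathbf{g}}\subseteq\mathbb{N}_p^{\overline{\mathbf{g}}}$, hence equality by symmetry, and then conformal equivalence via the cited lemma.

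The main obstacle is exactly this matching of tangent directions, because a light ray need not be an embedded submanifold of $M$: a null geodesic may self-intersect or be recurrent, so a priori ``the tangent line to $\ell$ at $p$'' is a finite (or larger) collection of lines, and one must exclude that $\overline\gamma$ meets $p$ only along a branch transverse to $A$. I would handle this by using that a geodesic is locally an embedding near every parameter value: near an interior point $q$ of $A$ at which $\gamma$ neither returns nor accumulates, $\ell$ coincides with $A$, so every passage of $\overline\gamma$ through $q$ is tangent to $A$, and there the common tangent line is simultaneously $\mathbf{g}$-null and $\overline{\mathbf{g}}$-null; since such points $q$ occur densely in $M$ — sweeping out, as $v$ and $p$ vary, a dense set of null directions over a dense set of base points — and the lightcone field $p\mapsto\mathbb{N}_p^{\mathbf{g}}$ is continuous, the equality of lightcones propagates to all of $M$. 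The one genuinely technical point the argument rests on is thus that the set of tangent lines to $\ell$ at one of its points is an invariant of the subset $\ell\subset M$ alone, independent of the geodesic parametrisation and of the metric used to produce it.
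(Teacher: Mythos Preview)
The paper does not supply a proof of this proposition: the sentence preceding it simply asserts that the known characterisation of conformal equivalence via coincidence of lightcones ``can be automatically translated to the spaces of light rays'' and then states the result. Your argument is therefore already more than the paper offers; the forward implication, via the conformal transformation law for the Levi--Civita connection together with your inextensibility check, is correct and standard, and for the converse you follow exactly the paper's implicit route --- reduce to equality of lightcones and invoke \cite{Mi08} --- while correctly isolating the one non-obvious step: recovering the null direction at $p$ from the subset $\ell\subset M$ when $\ell$ may self-intersect. The lemma you name in your last sentence is precisely what is needed.

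The gap is in the density argument you propose for that lemma. The claim that points where $\gamma$ ``neither returns nor accumulates'' sweep out a dense set can fail outright: on the flat Lorentzian $2$-torus $\mathbb{R}^2/\mathbb{Z}^2$ with metric $-dt^2+dx^2$ every null geodesic is closed, so for every $\gamma$ no such point exists --- yet the proposition holds there, because each light ray is an embedded circle with a well-defined tangent line. Two repairs are available. First, if $M$ is causal --- and the paper assumes strong causality from Proposition~\ref{prop00150} onward --- null geodesics cannot self-intersect, strong causality additionally rules out accumulation, and your argument then goes through verbatim; this already covers everything the paper actually uses. Second, in full generality one can argue via Baire: writing the domain $J$ of $\overline{\gamma}$ as $\bigcup_n K_n$ with $K_n$ compact, the closed sets $\overline{\gamma}(K_n)\cap A$ cover the embedded arc $A\ni p$, so some $\overline{\gamma}(K_N)$ contains an open subarc $A'\subset A$; for $q\in A'$ a convergent subsequence of $\overline{\gamma}$-preimages in $K_N$ yields $s_*$ with $\overline{\gamma}'(s_*)\parallel T_qA$, giving your tangent-line invariance on $A'$. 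Pushing this from an open subarc to the original point $p$ still needs an extension step, so the general case is genuinely more delicate than the paper's one-line dismissal suggests.
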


Because of Prop, \ref{prop00100} we have the following definition.

\begin{definition}
Let $\left( M,\mathcal{C}_{\mathbf{g}}\right)$ be a conformal class of spacetimes with $\dim M=m\geq 3$.
We will call \emph{light ray}\index{light ray} the image $\gamma\left(I\right)$ in $M$ of a maximal null geodesic $\gamma:I\rightarrow M$ related to any metric $\bar{\mathbf{g}}\in\mathcal{C}_{\mathbf{g}}\left(M\right)$.
It will be denoted by $\left[\gamma\right]$ or $\gamma$ when there is not possibility of confusion, that is $\left[\gamma\right] \in \mathcal{N}$, $\gamma \in \mathcal{N}$ or also $\gamma \subset M$.
So, every light ray is equivalent to an unparametrized null geodesic.
Then, we will say that the \emph{space of light rays $\mathcal{N}$ of a conformal class of spacetimes}\index{space!of light rays} $\left(M,\mathcal{C}_{\mathbf{g}}\right)$ is the set
\begin{equation*}
\mathcal{N}=\left\{ \gamma\left(I\right)\subset M \mid \gamma:I \rightarrow M \text{ is a maximal null geodesic for any metric }  \bar{\mathbf{g}}\in \mathcal{C}_{\mathbf{g}} \right\} \, .
\end{equation*}
\end{definition}

A more geometric construction of $\mathcal{N}$ is possible as a quotient space of the tangent bundle $TM$ \cite{Lo06}.
This construction will allow us to show how $\mathcal{N}$ inherits the topological and differentiable structures of $TM$.

Let us consider the \emph{geodesic spray}\index{geodesic!spray} $X_{\mathbf{g}}$ related to the metric $\mathbf{g}$, that is the vector field in $TM$ such that its integral curves define the geodesics in $\left(M,\mathbf{g}\right)$ and their tangent vectors, and the \emph{Euler field $\Delta$ in $TM$}\index{field!Euler!in $TM$}\index{Euler!field in $TM$}, the vector field in $TM$ whose flow are scale transformations along the fibres of $TM$.

It is easy to see that both $X_{\mathbf{g}}$ and $\Delta$ are tangent to $\mathbb{N}$, and the differentiable distribution in $\mathbb{N}^+$ given by $\mathcal{D}=\mathrm{span}\left\{ X_{\mathbf{g}},\Delta\right\} $ verifies
\begin{equation*}
\left[ \Delta,X_{\mathbf{g}}\right] =X_{\mathbf{g}} \, ,
\end{equation*}%
hence $\mathcal{D}$ is involutive and, by Fr\"{o}benius' Theorem \cite[Thm. 1.60]{Wa83}, it is also integrable.
This means that the quotient space $\mathbb{N}^+/\mathcal{D}$ is well defined.
Every leaf of $\mathcal{D}$ is the equivalence class consisting of a future--directed null geodesic and all its affine reparametrizations preserving time--orientation, hence the space
\begin{equation*}
\mathcal{N}^+=\mathbb{N}^+/\mathcal{D} \, .
\end{equation*}
is the space of future-oriented light rays of $M$.
In a similar way we may construct the space of past-oriented light rays $\mathcal{N}^- = \mathbb{N}^-/\mathcal{D}$.

The space of light rays $\mathcal{N}$ can be obtained as the quotient $\mathbb{N} /\widetilde{\mathcal{D}}$
where $\widetilde{D}$ denotes the scale transformation group acting on $\mathbb{N}$, that is, $v \mapsto \lambda v$, $\lambda \neq 0$, $v \in \mathbb{N}$.  Notice that $\widetilde{\mathcal{D}} \cong \mathbb{R} - \{ 0 \}$ and the orbits of the connected component containing the identity can be identified with the leaves of $\mathcal{D}$.

Because our spacetime $M$ is time-orientable, in what follows we will consider the space of future-oriented light rays $\mathcal{N}^+$ and we will denote it again as $\mathcal{N}$ without risk of confusion.   As it will be shown later on, this convention will be handy as the space $\mathcal{N}^+$ carries a co-orientable contact structure (see Sect. \ref{sec:contact}) whereas $\mathcal{N}$ does not.

\subsection{Differentiable structure of $\mathcal{N}$}

If we require the space of light rays of $M$ to be a differentiable manifold, it is necessary to ensure that the leaves of the distribution that builds $\mathcal{N}$, are regular submanifolds.
This is not automatically true for any spacetime $M$, as example \cite[Ex. 1]{Lo01} shows, so it will be necessary to impose further conditions to ensure it.

It important to observe that if $M$ is strongly causal, every causal curve has a finite number of connected components in a relatively compact neighbourhood, and therefore the distribution $\mathcal{D}=\mathrm{span}\left\{ X_{\mathbf{g}}, \Delta \right\}$ is regular, then the quotient $\mathcal{N}=\mathbb{N}^{+} / \mathcal{D}$ is a manifold, as the following proposition \cite[Pr. 2.1]{Lo89} claims.

\begin{proposition}\label{prop00150}
Let $M$ be a strongly causal spacetime, then the distribution above defined by $\mathcal{D}=\mathrm{span}\left\{ X_{\mathbf{g}}, \Delta \right\}$ is regular and the space of light rays $\mathcal{N}$ inherits from $\mathbb{N}^{+}$ the structure of differentiable manifold such that $p_{\mathbb{N}^{+}} :\mathbb{N}^{+}\rightarrow \mathcal{N}$ defined by $p_{\mathbb{N}^{+}}\left(u\right)=\left[\gamma_u\right]$ is a submersion.
\end{proposition}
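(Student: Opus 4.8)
The plan is to apply the standard quotient-manifold theorem: a regular (in the Palais sense) foliation of a manifold has a quotient that is naturally a smooth manifold for which the projection is a submersion. Thus the real content is to show that the $2$-dimensional distribution $\mathcal{D} = \mathrm{span}\{X_{\mathbf{g}}, \Delta\}$ on $\mathbb{N}^+$, which we already know is involutive and hence integrable by the discussion preceding the statement, is in fact \emph{regular}: every point of $\mathbb{N}^+$ has a foliated (flow-box) neighbourhood $U$ such that each leaf of $\mathcal{D}$ meets the corresponding plaque-space in at most one plaque, i.e. each leaf intersects $U$ in a single connected piece. Once regularity is in hand, the quotient topology on $\mathcal{N} = \mathbb{N}^+/\mathcal{D}$ is Hausdorff and second countable, local plaque-coordinates transverse to $\mathcal{D}$ descend to charts on $\mathcal{N}$, and $p_{\mathbb{N}^+}$ is by construction a surjective submersion with the leaves $[\gamma_u]$ as its fibres.

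\textbf{Reducing regularity to strong causality.} First I would unwind what a leaf of $\mathcal{D}$ is: the leaf through $u \in \mathbb{N}^+$ is the set of all tangent vectors $\lambda\,\dot\gamma_u(s)$, $\lambda>0$, $s$ in the domain of the (time-orientation preserving, affinely parametrized) maximal null geodesic $\gamma_u$ with $\dot\gamma_u(0)=u$; this is a $2$-manifold immersed in $\mathbb{N}^+$, and the scaling direction $\Delta$ is "harmless" since it is always embedded in the fibre. So failure of regularity can only come from the geodesic direction $X_{\mathbf{g}}$: a null geodesic could return arbitrarily close, in $\mathbb{N}^+$ rather than merely in $M$, to a vector it has already passed through. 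Here I would invoke the structure of strongly causal spacetimes. By strong causality, every point $p \in M$ has arbitrarily small \emph{causally convex} neighbourhoods $V$ (no causal curve leaves and re-enters $V$). Lift this to $\mathbb{N}^+$: choose $V$ relatively compact and causally convex around $\pi(u)$, and work in the open set $\widehat{T}V \cap \mathbb{N}^+$. A flow-box for $X_{\mathbf{g}}$ together with fibre-scaling coordinates for $\Delta$ gives a candidate foliated chart $U$; I must check that a single leaf cannot re-enter $U$. If $\lambda\,\dot\gamma_u(s_1)$ and $\mu\,\dot\gamma_u(s_2)$ both lie in $U$ with $s_1<s_2$, then the null geodesic segment $\gamma_u|_{[s_1,s_2]}$ is a causal curve starting and ending in $V$; causal convexity forces it to stay in $V$, and then (after shrinking $V$ so that $X_{\mathbf{g}}$ restricted to $\widehat{T}V\cap\mathbb{N}^+$ has a genuine flow-box) the two vectors lie on the same plaque. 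This is precisely the statement, quoted from Low, that in a relatively compact neighbourhood a causal curve has only finitely many components — and after shrinking, exactly one.

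\textbf{Assembling the manifold structure.} With $\mathcal{D}$ regular, I would then write $\mathcal{N} = \mathbb{N}^+/\mathcal{D}$ with the quotient topology and verify the hypotheses of the quotient theorem (e.g. \cite[Thm. 1.60]{Wa83} and the associated slice/plaque argument): $p_{\mathbb{N}^+}$ is open because $\mathcal{D}$ is $\pi$-saturated with a flow, the equivalence relation is a closed embedded submanifold of $\mathbb{N}^+\times\mathbb{N}^+$ (using relative compactness and strong causality again to rule out leaves accumulating on each other, which gives Hausdorffness of the quotient), and $\mathbb{N}^+$ being second countable passes to the quotient. Transverse plaque coordinates on each foliated chart $U$ descend to homeomorphisms onto open subsets of $\mathbb{R}^{2m-3}$, and the transition maps are smooth because they are restrictions of the smooth transition maps of the foliated atlas; this gives the differentiable structure, and in these adapted charts $p_{\mathbb{N}^+}$ is the canonical projection $(x,y)\mapsto y$, hence a submersion.

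\textbf{Main obstacle.} The one genuinely delicate point is the passage from "strong causality of $(M,\mathbf{g})$" to "the leaves of $\mathcal{D}$ are \emph{regular} (not merely immersed) submanifolds of $\mathbb{N}^+$" — i.e. upgrading the causal-convexity statement in $M$ to a single-plaque statement in $\mathbb{N}^+$ while simultaneously controlling the scaling factor $\lambda$ and ruling out distinct leaves that spiral toward one another. Everything after that — openness of the projection, Hausdorffness, second countability, smoothness of the charts — is the routine machinery of the quotient-by-a-regular-foliation theorem and I would not belabor it.
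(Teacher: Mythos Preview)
Your overall strategy---use strong causality to produce causally convex neighbourhoods, deduce that each leaf of $\mathcal{D}$ meets a suitable foliated chart in a single plaque, and then invoke the quotient-by-a-regular-foliation theorem---is exactly the approach the paper sketches (and attributes to \cite[Pr.~2.1]{Lo89}). The paper's entire argument is the one sentence preceding the proposition: in a strongly causal spacetime every causal curve has finitely many connected components in a relatively compact neighbourhood, hence $\mathcal{D}$ is regular. You have correctly unpacked that sentence.

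There is, however, a genuine error in your ``Assembling the manifold structure'' paragraph. You assert that the quotient topology on $\mathcal{N}=\mathbb{N}^+/\mathcal{D}$ is Hausdorff, and that strong causality ``rule[s] out leaves accumulating on each other.'' This is false: the paper itself gives the counterexample (Example~\ref{N-not-T2}) of two-dimensional Minkowski space with the point $(1,1)$ removed, which is strongly causal but whose space of light rays is \emph{not} Hausdorff---the sequence $\lambda_n(s)=(s,\tau_n+s)$ converges to two distinct maximal null geodesics. So the equivalence relation is \emph{not} in general a closed submanifold of $\mathbb{N}^+\times\mathbb{N}^+$, and your argument for Hausdorffness cannot go through. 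The proposition is to be read as producing a possibly non-Hausdorff smooth manifold (regularity of $\mathcal{D}$ gives local transverse charts and makes $p_{\mathbb{N}^+}$ a submersion, nothing more); Hausdorffness of $\mathcal{N}$ is a separate issue that the paper addresses afterwards and secures only under the additional hypothesis of null pseudo-convexity. You should remove the Hausdorffness claim from your proof and keep only the local chart construction.
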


The space $\mathcal{N}$ can also be constructed as a quotient of the bundle of null directions $\mathbb{PN}$ defined below, that is we can proceed to compute the quotient space $\mathbb{N}^+/\mathcal{D}$ in two steps, we will first quotient with respect to the action dilation field $\Delta$ and, secondly we will pass to the quotient defined by the integral curves of the geodesic field.

First, consider the involutive distribution $\mathcal{D}_{\Delta}=\mathrm{span}\left\{\Delta\right\}$. Observe that for any $u\in TM$ the vectors $e^t u \in TM $ run the integral curve of $\Delta$ passing through $u$ when $t\in \mathbb{R}$. Then, if
\begin{equation}
\phi \colon TV\rightarrow \mathbb{R}^{2m};\quad \xi \mapsto \left( x,\dot{x}\right)
\label{equation3.1}
\end{equation}
are the canonical coordinates in $TM$, then $\left( x,e^t\dot{x}\right)$ describe integral curves of $\Delta$. Taking homogeneous coordinates for $\dot{x}$, we can obtain that $\left( x,\left[\dot{x}\right]\right)$ are coordinates adapted to foliation generated by $\mathcal{D}_{\Delta}$. Therefore $\mathcal{D}_{\Delta}$ is regular and its restriction to $\mathbb{N}^{+}$ is also regular, hence the quotient space $\mathbb{N}^{+}/\mathcal{D}_{\Delta}$ defined by
\[
\mathbb{PN}=\mathbb{N}^{+}/\mathcal{D}_{\Delta}=\left\{ \left[\xi\right]:\eta\in \left[\xi\right] \Leftrightarrow \eta=e^t \xi \text{ for some } t\in \mathbb{R} \text{ and }\xi\in\mathbb{N}^{+}  \right\}
\]
is a differentiable manifold and, moreover, the canonical projection $\pi^{\mathbb{N}^{+}}_{\mathbb{PN}}:\mathbb{N}^{+} \rightarrow \mathbb{PN}$, given by $\xi \mapsto \left[\xi\right]$, is a submersion.

The next step is to find a regular distribution that allows us to define $\mathcal{N}$ by a quotient.
For each vector $u\in \mathbb{N}^{+}_{p}$ there exists a null geodesic $\gamma _{u}$ such that $\gamma _{u}\left( 0\right) =p$ and $\gamma _{u}^{\prime }\left( 0\right) =u$, and given two vectors $u,v\in \mathbb{N}^{+}_{p}$ verifying that $v=\lambda u$ with $\lambda >0$, then the geodesics $\gamma _{u}$ and $\gamma _{v}$ such that $\gamma _{u}\left( 0\right)=\gamma _{v}\left( 0\right) =p$ have the property
\begin{equation*}
\gamma _{v}\left( s\right) =\gamma _{\lambda u}\left( s\right) =\gamma_{u}\left( \lambda s\right)
\end{equation*}
hence they have the same image in $M$ and then $\gamma _{v}=\gamma _{u}$ as unparametrized sets in $M$.
This fact implies that the elevations to $\mathbb{PN}$ of the null geodesics of $M$ define a foliation $\mathcal{D}_{G}$.
Two directions $\left[ u\right] ,\left[ v\right] \in \mathbb{PN}$ belong to the same leaf of the foliation $\mathcal{D}_{G}$ if for the vectors $v\in \mathbb{N}^{+}_{p}$ and $u\in \mathbb{N}^{+}_{q}$ there exist null geodesics $\gamma _{1}$ and $\gamma _{2}$ and values $t_{1},t_{2}\in \mathbb{R}$ verifying
\begin{equation*}
\left\{
\begin{array}{l}
\gamma _{1}\left( t_{1}\right) =p\in M \\
\gamma _{1}^{\prime }\left( t_{1}\right) =v\in \mathbb{N}^{+}_{p}%
\end{array}
\right. \text{\hspace{1cm}and\hspace{1cm}}\left\{
\begin{array}{l}
\gamma _{2}\left( t_{2}\right) =q\in M \\
\gamma _{2}^{\prime }\left( t_{2}\right) =u\in \mathbb{N}^{+}_{q}%
\end{array}%
\right.
\end{equation*}%
such that there is a reparametrization $h$ verifying $\gamma _{1}=\gamma_{2}\circ h$.

Hence, the space of leaves of $\mathcal{D}_{G}$ in $\mathbb{PN}$ coincides with $\mathcal{N}$, that is,
\begin{equation*}
\mathcal{N}=\mathbb{PN} / \mathcal{D}_{G}
\end{equation*}

The map $p_{\mathbb{PN}} : \mathbb{PN} \longrightarrow \mathcal{N}$ given by
$\left[ u\right] \longmapsto \left[\gamma _{u}\right]$ is well defined, since $\gamma _{\lambda u}\left( s\right) =\gamma _{u}\left( \lambda s\right) $ as seen above, and it verifies the identity $p_{\mathbb{PN}} \left( \left[ \gamma _{u}^{\prime }\left( s\right) \right] \right) =\left[\gamma _{u}\right]\in\mathcal{N}$
for all $s$.

Proposition \ref{prop00150} can be formulated for the bundle $\mathbb{PN}$ with regular distribution$\mathcal{D}_{G}$ instead.   In \cite[Thm. 1]{Lo06} a similar result is proved for the subbundle $\left(\mathbb{N}^{*}\right)^{+}$ of the cotangent bundle $T^{*}M$.

Now, we will describe a generic way to construct coordinate charts in $\mathcal{N}$.
First, we define for any subset $W\subset M$, the sets
$\mathbb{N}\left(W\right)=\left\{ \xi\in \mathbb{N}:\pi_{M}^{\mathbb{N}}\left(\xi\right)\in W\subset M\right\}$, and
$\mathbb{PN}\left(W\right)=\left\{ \left[\xi\right]\in \mathbb{PN}:\pi_{M}^{\mathbb{PN}}\left(\left[\xi\right]\right)\in W\subset M\right\}$.

Take $V\subset M$ a causally convex, globally hyperbolic and open set with a smooth spacelike Cauchy surface $C\subset V$.
Let $\mathcal{U}$ be the image of the projection $p_{\mathbb{N}}:\mathbb{N}^{+}\left( V\right) \mapsto \mathcal{N}$.
Since $\mathbb{N}^{+}\left(V\right)$ is open in $\mathbb{N}^{+}$ and $p_{\mathbb{N}}$ is a submersion, then $\mathcal{U}\subset \mathcal{N}$ is open.
Each null geodesic passing through $V$ intersects $C$ in a unique point and since $p_{\mathbb{N}^{+}}=p_{\mathbb{PN}}\circ \pi^{\mathbb{N}^{+}}_{\mathbb{PN}}$, this ensures that
\[
\mathcal{U} = p_{\mathbb{N}} \left( \mathbb{N}^{+}\left( V\right) \right) = p_{\mathbb{N}} \left( \mathbb{N}^{+}\left( C\right) \right) = p_{\mathbb{N}}\circ \pi_{\mathbb{PN}}^{\mathbb{N}^{+}} \left( \mathbb{N}^{+}\left( C\right) \right) = p_{\mathbb{PN}} \left( \mathbb{PN}\left( C\right) \right) = p_{\mathbb{PN}} \left( \mathbb{PN}\left( V\right) \right).
\]
Then we can define the diffeomorphism $\sigma = \left. p_{\mathbb{PN}}\right\vert _{\mathbb{PN}\left( C\right) }:\mathbb{PN}\left( C\right)\mapsto \mathcal{U}$. 
So, we have the following diagram
\begin{equation}\label{diagrama002}
\begin{tikzpicture}[every node/.style={midway}]
\matrix[column sep={6em,between origins},
        row sep={2em}] at (0,0)
{ \node(PN1)   {$\mathbb{PN}\left( V\right)$}  ; & \node(N) {$\mathcal{U}$}; \\
  \node(PN2) {$\mathbb{PN}\left( C\right)$};                   \\};
\draw[->] (PN1) -- (N) node[anchor=south]  {$p_{\mathbb{PN}} $};
\draw[->] (PN2) -- (N) node[anchor=north]  {$\sigma$};
\draw[<-right hook] (PN1)   -- (PN2) node[anchor=east] {inc};
\end{tikzpicture}
\end{equation}
If $\phi$ is any coordinate chart for $\mathbb{PN}\left(C\right)$ then $\phi\circ \sigma^{-1}$ is a coordinate chart for $\mathcal{U}\subset \mathcal{N}$.

Observe that if $M$ is time--orientable, there exists a non--vanishing future timelike vector field $T\in\mathfrak{X}\left(M\right)$ everywhere.
Then we can define the submanifold $\Omega^{T}\left(C\right)\subset \mathbb{N}^{+}\left(C\right)$ by
\[
\Omega^{T}\left(C\right)=\left\{\xi\in \mathbb{N}^{+}\left(C\right): \mathbf{g}\left(\xi, T\right)=-1  \right\}
\]

The restriction $\left.\pi^{\mathbb{N}^{+}}_{\mathbb{PN}}\right|_{\Omega^{T}\left(C\right)}: \Omega^{T}\left(C\right)\rightarrow \mathbb{PN}\left(C\right)$ of the submersion $\pi^{\mathbb{N}^{+}}_{\mathbb{PN}}:\mathbb{N}^{+}\rightarrow \mathbb{PN}$ is clearly a diffeomorphism.

So, we have the following diagram
\begin{equation}\label{cadena-espacios}
\mathcal{N} \supset \mathcal{U} \leftrightarrow \mathbb{PN}\left(C\right) \leftrightarrow \Omega^{T}\left(C\right) \hookrightarrow \mathbb{N}^{+}\left(C\right) \hookrightarrow \mathbb{N}^{+} \hookrightarrow TM
\end{equation}
where $\leftrightarrow$ and $\hookrightarrow$ represent diffeomorphisms and inclusions respectively.

Then, the composition of the diffeomorphism $\mathcal{U} \rightarrow \Omega^{T}\left(C\right)$ with the restriction of a coordinate chart in $TM$ to the vectors in $\Omega^{T}\left(C\right)$, can be used to construct a coordinate chart in $\mathcal{N}$.

If we require the space of light rays of $M$ to be a differentiable manifold, it remains to ensure that $\mathcal{N}$ is a Hausdorff topological space.
Again, it is not verified for any strongly causal spacetime $M$ as we can check in example \ref{N-not-T2}, so we need to state conditions to ensure it.

\begin{example}\label{N-not-T2}
$\mathcal{N}$ is not Hausdorff.
Consider the two--dimensional Minkowski spacetime and remove the point $\left( 1,1\right)$.
Clearly, $M$ is strongly causal.
Let $\left\{ \tau _{n}\right\} \subset \mathbb{R}$ be a sequence such that $\lim\limits_{n\mapsto \infty }\tau _{n}=0$.
Then the sequence of null geodesic given by $\lambda _{n}\left( s\right) =\left(s,\tau _{n}+s\right) $ with $s\in \left( -\infty ,\infty \right) $ converges to two different null geodesics, $\mu _{1}\left( s\right) =\left(s,s\right) $ with $s\in \left( -\infty ,1\right) $ and $\mu _{2}\left( s\right)=\left( s,s\right) $ with $s\in \left( 1,\infty \right)$.
\end{example}

A sufficient condition to ensure that $\mathcal{N}$ is Hasudorff is the absence of naked singularities \cite[Prop. 2.2]{Lo89}, but we will see in example \ref{ejemplo-naked-T2} that it is not a necessary condition.

\begin{example}\label{ejemplo-naked-T2}
Let $\mathbb{M}$ be the 3--dimensional Minkowski space--time described by coordinates $\left(t,x,y\right)$ and equipped with the metric $\mathbf{g}=-dt\otimes dt + dx\otimes dx + dy\otimes dy$. 
Now, consider the restriction $\mathbb{B}=\left\{\left(t,x,y\right)\in\mathbb{M}:t^2 + x^2 + y^2 < 1 \right\}$. It is clear that $\mathbb{B}$ is strongly causal but not globally hyperbolic.


It is easy to find inextensible causal curves fully contained in the chronological future or past of some event $p\in \mathbb{B}$, then $\mathbb{B}$ is nakedly singular.

The space of light rays $\mathcal{N}_{\mathbb{B}}$ of $\mathbb{B}$ verifies that $\mathcal{N}_{\mathbb{B}}\subset \mathcal{N}_{\mathbb{M}}$. Moreover, the inclusion $\mathcal{N}_{\mathbb{B}}\hookrightarrow \mathcal{N}_{\mathbb{M}}$ is clearly injective and every light ray has a neighbourhood in $\mathcal{N}_{\mathbb{B}}$ corresponding to another neighbourhood in $\mathcal{N}_{\mathbb{M}}$. It is not difficult to show that $\mathcal{N}_{\mathbb{B}}$ is open in $\mathcal{N}_{\mathbb{M}}$. Since $M$ is globally hyperbolic, then $\mathcal{N}_{\mathbb{M}}$ is Hausdorff and therefore $\mathcal{N}_{\mathbb{B}}$ is also Hausdorff.
\end{example}

Example \ref{ejemplo-naked-T2} shows that the absence of naked singularities is a condition too strong for a strongly causal spacetime $M$. Moreover in this case, $M$ becomes globally hyperbolic as Penrose proved in \cite{Pe79}.

A suitable condition to avoid the behavior of light rays in the paradigmatic example \ref{N-not-T2} but to permit naked singularities similar to the ones in example \ref{ejemplo-naked-T2} is the condition of \emph{null pseudo--convexity}.

\begin{definition}
A spacetime $M$ is said to be \emph{null pseudo--convex}\index{null!pseudo--convex}\index{pseudo--convex!null} if for any compact $K\subset M$ there exists a compact $K'\subset M$ such that any null geodesic segment $\gamma$ with endpoints in $K$ is totally contained in $K'$.
\end{definition}

In \cite{Lo90-2}, Low states the equivalence of null pseudo--convexity of $M$ and the Hausdorffness of $\mathcal{N}$ for a strongly causal spacetime $M$.
From now on, we will assume that $M$ is a strongly causal and null pseudo--convex spacetime unless others conditions are pointed out.


\section{Tangent bundle of $\mathcal{N}$}

To take advantage of the geometry and topology of $\mathcal{N}$ it is needed to have a suitable characterization of the tangent spaces $T_{\gamma }\mathcal{N}$ for any $\gamma \in \mathcal{N}$.
We will proceed as follows: first, we fix an auxiliary metric in the conformal class  $\mathcal{C}$, then we will define \emph{geodesic variations} (in particular, \emph{variations by light rays}) and \emph{Jacobi fields}, explaining the relation between both concepts (in lemmas \ref{lema00351}, \ref{lema00352}, \ref{lemmaDC92} and proposition \ref{prop00200}). Then, in proposition \ref{prop00254}, we will characterize tangent vector of $TM$ in terms of Jacobi fields. 
Second, we will keep an eye on how the Jacobi fields changes when we vary the parameters of the corresponding variation by light rays or conformal metric of $M$ (see from lemma \ref{lemma-g-geodesic} to \ref{lemma270}).

Finally, in proposition \ref{prop00399}, we will get the aim of this section identifying tangent vectors of $\mathcal{N}$ with some equivalence classes of Jacobi fields.

\begin{definition}
A differentiable map $\mathbf{x}:\left(a,b\right)\times \left(\alpha,\beta\right) \rightarrow M$ is said to be a \emph{variation of a segment of curve $c:\left(\alpha,\beta\right)\rightarrow M$}\index{variation!of a curve} if $c\left(t\right)=\mathbf{x}\left(s_0,t\right)$ for some $s_0\in \left(a,b\right)$.
We will say that $V_{s_0}^{\mathbf{x}}$ is the \emph{initial field of $\mathbf{x}$}\index{initial field!of variation}\index{variation!initial field of} in $s=s_0$ if
\[
V_{s_0}^{\mathbf{x}}\left(t\right) = d \mathbf{x}_{\left(s_0,t\right)}\left(\frac{\partial}{\partial s}\right)_{\left(s_0,t\right)}= \left.\frac{\partial \mathbf{x}\left(s,t\right)}{\partial s}\right|_{\left(s_0,t\right)}
\]
defining a vector field along $c$.

We will say that $\mathbf{x}$ is a \emph{geodesic variation}\index{geodesic!variation}\index{variation!geodesic} if any longitudinal curve of $\mathbf{x}$, that is $c_s^{\mathbf{x}}=\mathbf{x}\left(s,\cdot\right)$ for $s\in \left(a,b\right)$, is a geodesic.

If the longitudinal curves $c_s^{\mathbf{x}}:\left(\alpha,\beta\right)\rightarrow M$ are regular curves covering segments of light rays, then $\mathbf{x}:\left(a,b\right)\times \left(\alpha,\beta\right) \rightarrow M$ is said to be a \emph{variation by light rays}\index{light rays!variation}\index{variation!by light rays}.

Moreover, a variation by light rays $\mathbf{x}$ is said to be a \emph{variation by light rays of $\gamma\in \mathcal{N}$}\index{variation!by light rays!of a light ray $\gamma$} if $\gamma$ is a longitudinal curve of $\mathbf{x}$.
\end{definition}

\begin{notation}
It is possible to identify a given segment of null geodesic $\gamma:\left(-\delta,\delta\right)\rightarrow M$, with a slight abuse in the notation, to the light ray in $\mathcal{N}$ defined by it.
So, if $\mathbf{x}=\mathbf{x}\left(s,t\right)$ is a variation by light rays, we can denote by $\gamma_s^{\mathbf{x}}\subset M$ the null pregeodesics of the variation and also by $\gamma_s^{\mathbf{x}}\in \mathcal{N}$ the light rays they define.
\end{notation}

Consider a geodesic curve $\mu\left(t\right)$ in a spacetime $\left(M,\mathbf{g}\right)$.
Given $J\in \mathfrak{X}_{\mu}$, we will abbreviate the notation $J'=\frac{D J}{dt}$ and $J''=\frac{D}{dt}\frac{D J}{dt}=\frac{D^2 J}{dt^2}$.
We can define the \emph{Jacobi equation}\index{equation!Jacobi}\index{Jacobi!equation} by
\begin{equation}\label{eq-Jacobi}
J''+R\left(J,\mu'\right)\mu' =0
\end{equation}
where $R$ is the Riemann tensor.
We will name the solutions of the equation \ref{eq-Jacobi} by \emph{Jacobi field}\index{field!Jacobi}\index{Jacobi!field} along $\mu$.
So, the set of Jacobi fields along $\mu$ is then defined by
\begin{equation}\label{campos-Jacobi}
\mathcal{J}\left(\mu\right)=\left\{ J\in \mathfrak{X}_{\mu}: J''+R\left(J,\mu'\right)\mu' =0 \right\}
\end{equation}

The linearity of $\frac{D}{dt}$ and $R$ provides a vector space structure to $\mathcal{J}\left(\mu \right)$ hence $\mathcal{J}\left( \mu \right) $ is a vector subspace of $\mathfrak{X}_{\mu}$.

The relation between geodesic variations and Jacobi fields is expounded in next lemma.

\begin{lemma}\label{lema00351}
If $\mathbf{x}:\left(-\epsilon, \epsilon\right)\times \left(-\delta, \delta\right)\rightarrow M$ is a geodesic variation of a geodesic $\gamma$, then the initial field $V^{\mathbf{x}}$ is a Jacobi field along $\gamma$.
\end{lemma}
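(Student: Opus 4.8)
The plan is to carry out the standard computation showing that the "mixed second covariant derivative" of a geodesic variation is controlled by the curvature tensor, and hence the transverse field satisfies the Jacobi equation. Write $\mathbf{x} = \mathbf{x}(s,t)$, let $\partial_s, \partial_t$ denote the coordinate vector fields on the parameter rectangle, and let $S = d\mathbf{x}(\partial_s)$, $T = d\mathbf{x}(\partial_t)$ be their images, so that along $\gamma = \mathbf{x}(0,\cdot)$ we have $V^{\mathbf{x}} = S$ and $\gamma' = T$. The hypothesis that $\mathbf{x}$ is a geodesic variation means precisely that $\frac{D}{\partial t} T = 0$ for every $s$. The goal is to show $\frac{D^2}{\partial t^2} S + R(S,T)T = 0$ along $s = 0$.

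The key steps, in order, are: (1) invoke the symmetry of the induced connection, $\frac{D}{\partial s} T = \frac{D}{\partial t} S$, which holds because the Levi-Civita connection is torsion-free and $[\partial_s,\partial_t] = 0$ (this is the standard "symmetry lemma" for the pullback connection along a parametrized surface); (2) compute
\[
\frac{D}{\partial t}\frac{D}{\partial t} S = \frac{D}{\partial t}\frac{D}{\partial s} T,
\]
using step (1); (3) apply the definition of the curvature tensor of the pullback connection, namely $\frac{D}{\partial t}\frac{D}{\partial s} T - \frac{D}{\partial s}\frac{D}{\partial t} T = R(T,S)T$ (again using $[\partial_s,\partial_t]=0$), to rewrite the right-hand side as $\frac{D}{\partial s}\frac{D}{\partial t} T + R(T,S)T$; (4) use the geodesic hypothesis $\frac{D}{\partial t} T = 0$ to kill the first term, obtaining $\frac{D^2}{\partial t^2} S = R(T,S)T = -R(S,T)T$, which is exactly the Jacobi equation \eqref{eq-Jacobi} for $J = S = V^{\mathbf{x}}$ along $\mu = \gamma$. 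Finally, restrict to $s = 0$.

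The only genuinely delicate points are the two "commutation" facts in steps (1) and (3): the symmetry $\frac{D}{\partial s} T = \frac{D}{\partial t} S$ and the curvature identity for second covariant derivatives along a map. Both are standard properties of the connection induced on the pullback bundle $\mathbf{x}^* TM$ (see, e.g., \cite{On83}), and in local coordinates they reduce to the symmetry $\Gamma^k_{ij} = \Gamma^k_{ji}$ and the coordinate expression of the Riemann tensor, respectively; I would either cite them or indicate the one-line coordinate verification. Everything else is a mechanical application of these identities together with the hypothesis that each longitudinal curve is a geodesic, so no further obstacle arises. \fin
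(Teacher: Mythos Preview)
Your argument is correct and is precisely the standard computation---symmetry lemma for the pullback connection, curvature identity for the commutator of second covariant derivatives, and the geodesic hypothesis $\frac{D}{\partial t}T=0$---that the paper defers to by simply citing \cite[Lem.~8.3]{On83}. So you have spelled out exactly the proof the paper invokes; the approaches coincide.
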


\begin{proof}
See \cite[Lem. 8.3]{On83}.
\hfill$\Box$\bigskip

\end{proof}

A Jacobi field along a geodesic $\gamma$ is fully defined by its initial values at any point of $\gamma$ as lemma \ref{lema00352} claims, and moreover it also implies that the vector space $\mathcal{J}\left( \mu \right)$ is isomorphic to $T_{p}M\times T_{p}M$ therefore $\dim \left(\mathcal{J}\left( \gamma \right) \right) =2\dim \left( M\right) =2m$.

\begin{lemma}\label{lema00352}
Let $\gamma $ be a geodesic in $M$ such that $\gamma \left( 0\right) =p$ and $u,v\in T_{p}M$.
Then there exists a only Jacobi field $J$ along $\gamma$ such that $J\left( 0\right) =u$ and $\frac{DJ}{dt}\left( 0\right) =v$.
\end{lemma}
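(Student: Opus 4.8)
The plan is to reduce the existence-and-uniqueness statement for the Jacobi equation \eqref{eq-Jacobi} to the standard theorem on linear second-order ODE systems. First I would fix a point $p=\gamma(0)$ and choose a parallel frame $\{E_1(t),\dots,E_m(t)\}$ along $\gamma$; this exists because parallel transport along $\gamma$ is an isometry-preserving isomorphism $T_pM\to T_{\gamma(t)}M$ obtained by solving the (linear, homogeneous) parallel-transport ODE with the basis of $T_pM$ as initial data. Writing an arbitrary vector field $J\in\mathfrak{X}_\gamma$ as $J(t)=\sum_{i=1}^m f^i(t)\,E_i(t)$, covariant differentiation becomes ordinary differentiation, $\tfrac{DJ}{dt}=\sum_i (f^i)'E_i$ and $J''=\sum_i (f^i)''E_i$, because the $E_i$ are parallel. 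Substituting into \eqref{eq-Jacobi} and expanding $R(E_j,\gamma')\gamma'=\sum_i a^i_j(t)E_i$ with smooth coefficients $a^i_j$ (smooth since $R$, $\gamma$, $\gamma'$ and the frame are all smooth), the Jacobi equation is equivalent to the linear system $(f^i)''(t)+\sum_j a^i_j(t) f^j(t)=0$ for $i=1,\dots,m$.

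Next I would invoke the existence and uniqueness theorem for linear ODEs: given $u,v\in T_pM$, expand them in the frame at $t=0$ as $u=\sum_i u^i E_i(0)$, $v=\sum_i v^i E_i(0)$, and apply the theorem to the system above with initial conditions $f^i(0)=u^i$, $(f^i)'(0)=v^i$. This yields a unique global smooth solution $(f^1,\dots,f^m)$ on the interval of definition of $\gamma$, and setting $J(t)=\sum_i f^i(t)E_i(t)$ produces a Jacobi field with $J(0)=u$ and $\tfrac{DJ}{dt}(0)=v$. For uniqueness, any Jacobi field with the prescribed initial data has coordinate functions solving the same linear system with the same initial conditions, hence coincides with $J$ by the uniqueness clause of the ODE theorem.

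There is essentially no hard obstacle here; the only point requiring a little care is the translation step — verifying that, in a parallel frame, \eqref{eq-Jacobi} is genuinely a \emph{linear} system with smooth time-dependent coefficients and no first-order term, so that the linear ODE theory (with its global, rather than merely local, solutions) applies on the whole parameter interval of $\gamma$. The remark that $\mathcal{J}(\gamma)\cong T_pM\times T_pM$ and hence $\dim\mathcal{J}(\gamma)=2m$ then follows immediately: the map $J\mapsto\bigl(J(0),\tfrac{DJ}{dt}(0)\bigr)$ is linear, and the existence part shows it is surjective while the uniqueness part shows it is injective. One could alternatively cite \cite[Lem. 8.7, Prop. 8.8]{On83}, which is the expected shortcut, but the argument above is self-contained.
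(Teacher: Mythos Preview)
Your argument is correct: reducing the Jacobi equation to a linear second-order ODE system via a parallel frame and then invoking the global existence--uniqueness theorem for linear systems is exactly the standard proof, and it is what the paper defers to by citing \cite[Lem.~8.5]{On83}. The only discrepancy is that you point to Lem.~8.7/Prop.~8.8 rather than Lem.~8.5 in O'Neill, but the substance is the same.
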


\begin{proof}
See \cite[Lem. 8.5]{On83}.
\hfill$\Box$\bigskip

\end{proof}

Next lemma characterizes the Jacobi fields of a particular type of variation. This type will be the general case for the variations by light rays studied below.

\begin{lemma}
\label{lemmaDC92} Let $M$ be a spacetime, $\gamma:\left(-\delta,\delta\right)\rightarrow M$ a geodesic segment, $\lambda :\left( -\epsilon ,\epsilon\right)\rightarrow M$ a curve verifying $\lambda \left( 0\right)=\gamma\left( 0\right)$, and $W\left( s\right) $ a vector field along $\lambda $ such that $W\left( 0\right) =\gamma ^{\prime }\left( 0\right) $.
Then the Jacobi field $J$ along $\gamma$ defined by the geodesic variation
\begin{equation*}
\mathbf{x}\left( s,t\right) =\mathrm{exp}_{\lambda \left( s\right)
}\left(tW\left( s\right) \right)
\end{equation*}%
verifies that
\begin{equation*}
J\left(0\right) = \lambda'\left(0\right) \, , \qquad 
\frac{DJ}{dt}\left( 0\right)=\frac{DW}{ds}\left( 0\right)
\end{equation*}
\end{lemma}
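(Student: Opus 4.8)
The plan is to compute both initial values directly from the definition of the variation $\mathbf{x}(s,t)=\mathrm{exp}_{\lambda(s)}(tW(s))$, using Lemma \ref{lema00351} to guarantee that $J=V^{\mathbf{x}}$ is indeed a Jacobi field along $\gamma$. Note first that $\gamma(t)=\mathbf{x}(0,t)$ because $\mathbf{x}(0,t)=\mathrm{exp}_{\lambda(0)}(tW(0))=\mathrm{exp}_{\gamma(0)}(t\gamma'(0))=\gamma(t)$, so $\mathbf{x}$ is a genuine geodesic variation of $\gamma$ and $J(t)=\partial_s\mathbf{x}(s,t)|_{s=0}$ is a vector field along $\gamma$.

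For the first identity, I would evaluate the transverse curve at $t=0$: the curve $s\mapsto\mathbf{x}(s,0)=\mathrm{exp}_{\lambda(s)}(0)=\lambda(s)$, so $J(0)=\partial_s\mathbf{x}(s,0)|_{s=0}=\lambda'(0)$. For the second identity, the key tool is the symmetry of the mixed covariant derivative of $\mathbf{x}$ for a torsion-free connection, namely $\frac{D}{\partial s}\frac{\partial\mathbf{x}}{\partial t}=\frac{D}{\partial t}\frac{\partial\mathbf{x}}{\partial s}$ (see e.g. \cite[Lem. 8.3]{On83}). Thus
\[
\frac{DJ}{dt}(0)=\frac{D}{\partial t}\frac{\partial\mathbf{x}}{\partial s}\Big|_{(0,0)}=\frac{D}{\partial s}\frac{\partial\mathbf{x}}{\partial t}\Big|_{(0,0)}.
\]
Now $\frac{\partial\mathbf{x}}{\partial t}(s,t)$ is the velocity of the geodesic $t\mapsto\mathrm{exp}_{\lambda(s)}(tW(s))$, whose initial velocity at $t=0$ is $W(s)$; hence $\frac{\partial\mathbf{x}}{\partial t}(s,0)=W(s)$ as a vector field along $\lambda$, and therefore $\frac{D}{\partial s}\frac{\partial\mathbf{x}}{\partial t}|_{(0,0)}=\frac{DW}{ds}(0)$, which gives the claim.

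The only delicate point is the bookkeeping of where each covariant derivative is taken: one must be careful that $\frac{\partial\mathbf{x}}{\partial t}(s,t)$ restricted to $t=0$ really coincides with $W(s)$ as a field along the curve $s\mapsto\lambda(s)$, so that differentiating it covariantly in $s$ yields $\frac{DW}{ds}$ and not some other derivative; this is immediate from the standard fact that the exponential map satisfies $\frac{d}{dt}\big|_{t=0}\mathrm{exp}_p(tv)=v$. No genuine obstacle arises — the result is essentially the computation in \cite[Ch. 8]{On83} applied to this specific family of geodesics, and the proof is a two-line application of the symmetry lemma once the two restrictions $\mathbf{x}(s,0)=\lambda(s)$ and $\partial_t\mathbf{x}(s,0)=W(s)$ are recorded.
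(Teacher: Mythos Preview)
Your proof is correct and follows essentially the same approach as the paper: both compute $J(0)$ by observing that $\mathbf{x}(s,0)=\lambda(s)$, and both obtain $\frac{DJ}{dt}(0)$ by invoking the symmetry $\frac{D}{dt}\frac{\partial\mathbf{x}}{\partial s}=\frac{D}{ds}\frac{\partial\mathbf{x}}{\partial t}$ together with $\frac{\partial\mathbf{x}}{\partial t}(s,0)=W(s)$. Your version is slightly more explicit (you first check that $\mathbf{x}(0,t)=\gamma(t)$ and you flag where the symmetry lemma is used), but there is no substantive difference.
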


\begin{proof}
First, the vector $\frac{\partial\mathbf{x}}{\partial s}\left(0,0\right)$ is the tangent vector of the curve $\mathbf{x}\left(s,0\right)$ in $s=0$, and since $\mathbf{x}\left(s,0\right)=\mathrm{exp}_{\lambda\left(s\right)}\left(0\cdot W\left(s\right)\right)= \mathrm{exp}_{\lambda\left(s\right)}\left(0\right)=
\lambda\left(s\right)$, then we have
\begin{equation*}
J\left(0\right)=\frac{\partial\mathbf{x}}{\partial s}\left(0,0\right)=\frac{d\lambda}{ds}%
\left(0\right)=\lambda ^{\prime }\left(0\right)
\end{equation*}
On the other hand, $\frac{D}{ds}\frac{\partial\mathbf{x}}{\partial t}\left(0,0\right)$ is the covariant derivative of the vector field $\frac{\partial\mathbf{x}}{\partial t}\left(s,0\right)=W\left(s\right)$ for $s=0$ along the curve  $\mathbf{x}\left(s,0\right)=\lambda\left(s\right)$.
Then
\begin{equation*}
\frac{DJ}{dt}\left(0\right)=\frac{D}{dt}\frac{\partial\mathbf{x}}{\partial s}\left(0,0\right)=\frac{D}{ds}\frac{\partial\mathbf{x}}{\partial t}\left(0,0\right)= \frac{DW}{ds}\left(0\right).
\end{equation*}
as required.
\hfill$\Box$\bigskip

\end{proof}

\begin{remark}\label{remark-Jacobi-init-val}
It can be observed that given a geodesic variation $\mathbf{x}=\mathbf{x}\left(s,t\right)$ such that $J$ is the corresponding Jacobi field at $s=0$, if we change the geodesic parameters such that $\overline{\mathbf{x}}\left(s,\tau\right)=\mathbf{x}\left(s,a\tau + b\right)$ for $a>0$ and $b\in\mathbb{R}$, then the Jacobi field $\overline{J}$ of $\overline{\mathbf{x}}$ at $s=0$ verify
$\overline{J}=J$. This implies that changing the geodesic parameter does not modify the Jacobi field as a geometric object.
\end{remark}

\begin{proposition}\label{prop00200}
Given a geodesic $\gamma$ in $\left(M,\mathbf{g}\right)$ and a Jacobi field $J\in\mathcal{J}\left(\gamma\right)$ along $\gamma$, then $\mathbf{g}\left( J\left(t\right),\gamma'\left(t\right) \right)=a+bt$ is verified.
\end{proposition}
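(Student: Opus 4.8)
The plan is to show that the function $f(t) = \mathbf{g}\left(J(t),\gamma'(t)\right)$ has vanishing second derivative along $\gamma$, which forces it to be affine in $t$. The key tools are the compatibility of the Levi-Civita connection with the metric, the geodesic equation for $\gamma$, the Jacobi equation \eqref{eq-Jacobi} for $J$, and a symmetry of the Riemann tensor.

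First I would differentiate $f$ once using metric compatibility: $f'(t) = \mathbf{g}\left(\frac{DJ}{dt},\gamma'\right) + \mathbf{g}\left(J,\frac{D\gamma'}{dt}\right)$. Since $\gamma$ is a geodesic, $\frac{D\gamma'}{dt} = 0$, so the second term drops and $f'(t) = \mathbf{g}\left(J'(t),\gamma'(t)\right)$. Differentiating again, and again using that $\gamma$ is a geodesic to kill the term involving $\frac{D\gamma'}{dt}$, gives $f''(t) = \mathbf{g}\left(J''(t),\gamma'(t)\right)$. Now I substitute the Jacobi equation $J'' = -R\left(J,\gamma'\right)\gamma'$ to obtain $f''(t) = -\mathbf{g}\left(R\left(J,\gamma'\right)\gamma',\gamma'\right)$.

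The step where the computation actually closes — and the one I'd flag as the crux — is recognizing that $\mathbf{g}\left(R\left(J,\gamma'\right)\gamma',\gamma'\right) = 0$. This is the standard antisymmetry of the curvature tensor in its last pair of arguments: the map $X \mapsto \mathbf{g}\left(R\left(J,\gamma'\right)X,X\right)$ vanishes because $R\left(J,\gamma'\right)$ is skew-adjoint with respect to $\mathbf{g}$ (equivalently, $\mathbf{g}\left(R(X,Y)Z,W\right) = -\mathbf{g}\left(R(X,Y)W,Z\right)$, with the sign conventions of \cite{On83}). Hence $f''(t) \equiv 0$.

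Finally, integrating twice: $f''\equiv 0$ on the interval of definition implies $f(t) = a + bt$ for constants $a = f(0) = \mathbf{g}\left(J(0),\gamma'(0)\right)$ and $b = f'(0) = \mathbf{g}\left(J'(0),\gamma'(0)\right)$, which is exactly the claimed identity $\mathbf{g}\left(J(t),\gamma'(t)\right) = a + bt$. I expect no real obstacle beyond being careful with the sign convention for $R$ used in \eqref{eq-Jacobi}, so that the curvature symmetry is invoked correctly; everything else is a two-line computation.
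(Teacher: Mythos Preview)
Your proof is correct and follows exactly the approach indicated in the paper, which simply remarks that the claim is trivial upon differentiating $\mathbf{g}\left(J(t),\gamma'(t)\right)$ twice with respect to $t$. You have filled in the details of that computation accurately, including the use of the curvature symmetry $\mathbf{g}\left(R(J,\gamma')\gamma',\gamma'\right)=0$ to conclude that the second derivative vanishes.
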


\begin{proof}
It is trivial deriving $\mathbf{g}\left( J\left(t\right),\gamma'\left(t\right) \right)$ twice with respect to $t$.  
\hfill$\Box$\bigskip

\end{proof}


\begin{remark}\label{remark-lem00250}
Observe the following fact that we will need later. 
Given a differentiable curve $v:I\rightarrow TM$, the information contained in the tangent vector $v'\left(s_0\right)\in TTM$ coincides with the one in the covariant derivative of $v$ as vector field along its base curve in $M$. 
That is, from $v'\left(s_0\right)$ it is possible to determine the vectors $\alpha'\left(s_0\right)$ and the covariant derivative $\frac{Dv}{ds}\left( s_0\right)$ along $\alpha$, where $\alpha=\pi^{TM}_{M}\circ v$, and vice--versa.  
\end{remark}

It is possible to identify any tangent vector $\xi\in TTM$ with a Jacobi field along the geodesic $\gamma$ defined by the exponential of the vector $u=\pi^{TTM}_{TM} \left(\xi\right)\in TM$.

\begin{proposition}\label{prop00254}
Given a vector $u_0\in T_p M$ and consider the geodesic $\gamma_{u_0}$ defined by $\gamma_{u_0}\left(t\right)=\mathrm{exp}_p\left(tu_0\right)$.
Let $u:\left(-\delta,\delta\right)\rightarrow TM$ be a differentiable curve such that $u\left(0\right)=u_0$ and $u'\left(0\right)=\xi$.
If $J\in\mathcal{J}\left(\gamma_{u_0}\right)$ is the Jacobi field of the geodesic variation given by $\mathbf{x}\left(s,t\right)=\mathrm{exp}_{\alpha\left(s\right)}\left(t u\left(s\right)\right)$ where $\alpha = \pi_{M}^{TM} \circ u$, then the map
\[
\begin{tabular}{rrcl}
$\zeta:$ & $T_{u_0}TM$ & $\rightarrow$ & $\mathcal{J}\left(\gamma_{u_0}\right)$ \\
 &   $\xi$ & $\mapsto$ & $J$
\end{tabular}
\]
is a well--defined linear isomorphism.
\end{proposition}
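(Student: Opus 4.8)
The plan is to exhibit an explicit inverse (or at least show injectivity together with a dimension count) for the map $\zeta$, using Remark \ref{remark-lem00250} as the bridge between tangent vectors to $TM$ and covariant-derivative data along the base curve. First I would note that $\zeta$ is well--defined: by Lemma \ref{lema00351}, the initial field $V^{\mathbf{x}}$ of the geodesic variation $\mathbf{x}(s,t)=\mathrm{exp}_{\alpha(s)}(t\,u(s))$ is indeed a Jacobi field along $\gamma_{u_0}$, and one must check it does not depend on the particular curve $u$ chosen with $u(0)=u_0$, $u'(0)=\xi$, only on the $1$--jet data $\xi$. This follows from Lemma \ref{lemmaDC92} applied with $\lambda=\alpha$ and $W=u$: the resulting Jacobi field has initial conditions $J(0)=\alpha'(0)$ and $\frac{DJ}{dt}(0)=\frac{Du}{ds}(0)$, and both of these are determined by $\xi=u'(0)\in T_{u_0}TM$ precisely by the correspondence in Remark \ref{remark-lem00250} (namely $\alpha'(0)=d\pi^{TM}_M(\xi)$ and $\frac{Du}{ds}(0)$ is the connector/covariant-derivative part of $\xi$). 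Linearity of $\zeta$ is then immediate since both the passage $\xi\mapsto(\alpha'(0),\frac{Du}{ds}(0))$ and the assignment of a Jacobi field to its initial conditions are linear.

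Next I would prove bijectivity. By Lemma \ref{lema00352}, the evaluation map $\mathcal{J}(\gamma_{u_0})\to T_pM\times T_pM$, $J\mapsto (J(0),\frac{DJ}{dt}(0))$, is a linear isomorphism, so $\dim\mathcal{J}(\gamma_{u_0})=2m=\dim T_{u_0}TM$. It therefore suffices to show $\zeta$ is injective, or equivalently that the composite $T_{u_0}TM\to\mathcal{J}(\gamma_{u_0})\to T_pM\times T_pM$ sending $\xi\mapsto (J(0),\frac{DJ}{dt}(0))=(\alpha'(0),\frac{Du}{ds}(0))$ is an isomorphism. But this composite is exactly the canonical splitting of $T_{u_0}TM$ into its horizontal and vertical parts induced by the Levi--Civita connection, which is a linear isomorphism onto $T_pM\times T_pM$ by the standard structure of $TTM$; this is the content of Remark \ref{remark-lem00250}. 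Hence $\zeta$ is a linear isomorphism, and one can moreover write its inverse explicitly: given $J\in\mathcal{J}(\gamma_{u_0})$, set $\alpha(s)=\exp_p(s\,J(0))$ (or any curve with $\alpha(0)=p$, $\alpha'(0)=J(0)$) and let $u(s)$ be the vector field along $\alpha$ with $u(0)=u_0$ and $\frac{Du}{ds}(0)=\frac{DJ}{dt}(0)$; then $\zeta^{-1}(J)=u'(0)$.

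I expect the only genuinely delicate point to be the careful identification, via Remark \ref{remark-lem00250}, of the two ingredients read off from $\xi\in T_{u_0}TM$ — the base-point velocity $\alpha'(0)$ and the covariant derivative $\frac{Du}{ds}(0)$ — with $J(0)$ and $\frac{DJ}{dt}(0)$ respectively, since this is where the geometry of the double tangent bundle and the choice of connection enter; everything else reduces to invoking Lemmas \ref{lema00351}, \ref{lema00352} and \ref{lemmaDC92} together with a dimension count. In particular, the fact that different admissible choices of $u$ (hence of $\alpha$ and of the variation $\mathbf{x}$) yield the \emph{same} Jacobi field is guaranteed by Lemma \ref{lemmaDC92} together with Remark \ref{remark-Jacobi-init-val}, which shows insensitivity to reparametrization of the geodesics, so no extra argument is needed there.
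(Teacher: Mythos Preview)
Your argument is correct and follows essentially the same route as the paper: well-definedness via Lemma~\ref{lemmaDC92} and Remark~\ref{remark-lem00250}, linearity from those same identifications, and bijectivity by injectivity plus the dimension count $\dim T_{u_0}TM=\dim\mathcal{J}(\gamma_{u_0})=2m$. Your explicit identification of the composite $\xi\mapsto(J(0),\tfrac{DJ}{dt}(0))$ with the horizontal--vertical splitting of $T_{u_0}TM$ is a slightly more articulate formulation of what the paper leaves implicit in its appeal to Remark~\ref{remark-lem00250}; the invocation of Remark~\ref{remark-Jacobi-init-val} at the end is unnecessary (the variation $\mathbf{x}$ is already fixed by the curve $u$), but harmless.
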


\begin{proof}
Let $u_i:\left(-\delta_i,\delta_i\right)\rightarrow TM$ be differentiable curves such that $u_i\left(0\right)=u_0$ and $u_i'\left(0\right)=\xi$ for $i=1,2$, and consider the geodesic variations $\mathbf{x}_i\left(s_i,t\right)=\mathrm{exp}_{\alpha\left(s_i\right)}\left(t u_i\left(s_i\right)\right)$.
First, observe that for every $\xi\in T_{u_0}TM$, remark \ref{remark-lem00250} and lemma \ref{lemmaDC92} imply that $\xi$ defines unambiguously the initial values of the Jacobi field of $\mathbf{x}$ at $s=0$.
So $\zeta$ is well--defined.

The linearity of $\zeta$ is straightforward using remark \ref{remark-lem00250}.


Finally, if $\xi\in TM$ such that $\zeta\left(\xi\right)=0 $ then, in virtue of lemmas \ref{lemmaDC92} and remark \ref{remark-lem00250}, we have that $\xi = 0$.
This implies that $\zeta$ is injective, but since $\mathrm{dim}\left(\mathcal{J}\left(\gamma_{u_0}\right)\right) = \mathrm{dim}\left(T_{u_0}TM\right)=2m$ then we conclude that $\zeta$ is an isomorphism.
\hfill$\Box$\bigskip

\end{proof}

Now, we will focus on the variations by light rays and the Jacobi fields they define.
Fix a null geodesic $\gamma\in\mathcal{N}$ and assume that $\mathbf{x}\left( s,t\right) $ is a variation by light rays of $\gamma =\gamma_0^{\mathbf{x}}\in \mathcal{N}$ in such a way that $J\left( t\right) =V_{0}^{\mathbf{x}}\left(t\right) $ is the Jacobi field over $\gamma $ corresponding to the initial field of $\mathbf{x}$ and $\frac{\partial \mathbf{x}}{dt}\left(s,t\right)=\left(\gamma^{\mathbf{x}}_{s}\right)'\left(t\right)$.
Since $\mathbf{x}$ is a variation by light rays, then it provides that $\mathbf{g}\left( \left(\gamma^{\mathbf{x}}_{s}\right)'\left(t\right), \left(\gamma^{\mathbf{x}}_{s}\right)'\left(t\right) \right) =0$ for all
$\left( s,t\right) $ in the domain of $\mathbf{x}$, hence
\begin{equation*}
0=\left. \frac{\partial }{\partial s}\right|_{\left( 0,t\right)} \mathbf{g}\left( \left(\gamma^{\mathbf{x}}_{s}\right)'\left(t\right) ,\left(\gamma^{\mathbf{x}}_{s}\right)'\left(t\right) \right) =
2 \mathbf{g}\left( \left.\frac{D}{ds}\right|_{\left(0,t\right)}\frac{\partial \mathbf{x}}{dt}\left(s,t\right) ,\frac{\partial \mathbf{x}}{dt}\left(0,t\right) \right)=
\end{equation*}%
\begin{equation*}
=2 \mathbf{g}\left( \left.\frac{D}{dt}\right|_{\left(0,t\right)}\frac{\partial \mathbf{x}}{ds}\left(s,t\right) ,\frac{\partial \mathbf{x}}{dt}\left(0,t\right) \right)=\left. \frac{\partial }{\partial t}\right|_{\left( 0,t\right)} \mathbf{g}\left( V^{\mathbf{x}}_{s}\left(t\right) ,\left(\gamma^{\mathbf{x}}_{s}\right)'\left(t\right) \right)=
\end{equation*}%
\begin{equation*}
=\left. \frac{\partial }{\partial t}\right|_{t} \mathbf{g}\left( V^{\mathbf{x}}\left(t\right) ,\gamma'\left(t\right) \right) =
\left. \frac{\partial }{\partial t}\right|_{t} \mathbf{g}\left( J\left(t\right) ,\gamma'\left(t\right) \right)
\end{equation*}%
then the variations by light rays of a null geodesic $\gamma$ verify that their Jacobi fields $J$ fulfil
\begin{equation*}
\mathbf{g}\left( J\left( t\right) ,\gamma ^{\prime }\left( t\right) \right)=c
\end{equation*}%
with $c\in \mathbb{R}$ constant.
Then, we define the set of Jacobi fields of variations by light rays by
\begin{equation*}
\mathcal{J}_{L}\left( \gamma \right) =\left\{ J\in \mathcal{J}\left( \gamma
\right) :\mathbf{g}\left( J,\gamma^{\prime }\right)=c \text{ constant}%
\right\}
\end{equation*}%
Since $\mathbf{g}$ is bilinear then $\mathcal{J}_{L}\left( \gamma \right) $ is a vector subspace of $\mathcal{J}\left( \gamma \right) $ verifying $\mathrm{dim}\left(\mathcal{J}_{L}\left(\gamma\right)\right)=2\mathrm{dim}\left(M\right)-1=2m-1$.

Now, we define one--dimensional subspaces of $\mathcal{J}\left(\gamma\right)$ given by
$$
\widehat{\mathcal{J}}_{0}\left(\gamma\right) =\left\{  J\left(t\right)=bt\gamma'\left(t\right):b\in\mathbb{R} \right\} \, ,\qquad 
\widehat{\mathcal{J}}'_{0}\left(\gamma\right) =\left\{  J\left(t\right)=a\gamma'\left(t\right):a\in\mathbb{R} \right\} \, .
$$
It is trivial to see that $\widehat{\mathcal{J}}_{0}\left(\gamma\right)\subset \mathcal{J}_{L}\left(\gamma\right) $ and $\widehat{\mathcal{J}}'_{0}\left(\gamma\right) \subset \mathcal{J}_{L}\left(\gamma\right)$.

If $J\in \widehat{\mathcal{J}}_{0}\left(\gamma\right) \cap \widehat{\mathcal{J}}'_{0}\left(\gamma\right)$, then its initial values must verify
\[
\left\{
\begin{array}{l}
J\left(0\right)=0 \\
J'\left(0\right)=b\gamma'\left(0\right)
\end{array}
\right.
\hspace{1cm}
\mathrm{and}
\hspace{1cm}
\left\{
\begin{array}{l}
J\left(0\right)=a\gamma'\left(0\right) \\
J'\left(0\right)=0
\end{array}
\right.
\]
then $a=b=0$ and therefore $\widehat{\mathcal{J}}_{0}\left(\gamma\right) \cap \widehat{\mathcal{J}}'_{0}\left(\gamma\right)= \left\{0\right\}$.
So, we can define the direct product
\[
\mathcal{J}_{0}\left(\gamma\right)= \widehat{\mathcal{J}}_{0}\left(\gamma\right) \oplus \widehat{\mathcal{J}}'_{0}\left(\gamma\right)= \left\{  J\left(t\right)=\left(a+bt\right)\gamma'\left(t\right):a,b\in\mathbb{R} \right\}
\]
being the vector subspace of Jacobi fields proportional to $\gamma'$ and verifying $\mathrm{dim}\left(\mathcal{J}_{0}\left(\gamma\right)\right)=2$.

Now, we can define the quotient vector space
\[
\mathcal{L}\left(\gamma\right)= \mathcal{J}_{L}\left(\gamma\right) / \mathcal{J}_{0}\left(\gamma\right) = \left\{  \left[J\right]: K\in \left[J\right] \Leftrightarrow K=J + J_0 \text{ such that } J_0\in \mathcal{J}_{0}\left(\gamma\right) \right\}
\]
whose dimension is  $\mathrm{dim}\left(\mathcal{L}\left(\gamma\right)\right)=\mathrm{dim}\left(\mathcal{J}_{L}\left(\gamma\right)\right) - \mathrm{dim}\left(\mathcal{J}_{0}\left(\gamma\right)\right)=2\mathrm{dim}\left(M\right)-3$.
The elements of $\mathcal{L}\left(\gamma\right)$ will be denoted by $\left[J\right]\equiv J\left(\mathrm{mod}\gamma'\right)$ and we will say that $K=J\left(\mathrm{mod}\gamma'\right)$ when $\left[K\right]=\left[J\right]$.

Next lemma claims that there exist a change of parameter such that any variation by light rays can be transformed in a geodesic variation by light rays.
So, lemma \ref{lema00351} can be used.

\begin{lemma}\label{lemma-g-geodesic}
Let $\mathbf{x}=\mathbf{x}\left(s,t\right)$ be a variation by light rays in $\left(M,\mathcal{C}\right)$ such that $\gamma_s\left(t\right)=\mathbf{x}\left(s,t\right)$ defines its light rays.
Fixed any metric $\mathbf{g}\in \mathcal{C}$ then there exists a differentiable function $h=h\left(s,\tau\right)$ such that the light rays parametrized as $\overline{\gamma}_s=\gamma_s\left(h\left(s,\tau\right)\right)$ are null geodesics related to $\mathbf{g}$.
\end{lemma}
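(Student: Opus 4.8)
The plan is to work fibrewise in the parameter $s$ and produce, for each fixed $s$, an affine reparametrization of the pregeodesic $\gamma_s$ that turns it into a $\mathbf{g}$-geodesic, and then to check that these reparametrizations can be chosen to depend differentiably on $s$. First I would recall the standard ODE characterization: a regular curve $c(t)$ whose image is a null pregeodesic satisfies $\frac{D}{dt}c' = f(t)\,c'$ for some smooth function $f$ depending on $c$ (and on the chosen metric $\mathbf{g}\in\mathcal{C}$), since the acceleration of a pregeodesic is proportional to its velocity. A reparametrization $\overline{c}(\tau)=c(h(\tau))$ is a genuine $\mathbf{g}$-geodesic precisely when $h$ solves the second-order ODE $h'' = f(h)\,(h')^2$, equivalently when the ``affine parameter'' condition $\frac{D}{d\tau}\overline{c}' \parallel$ nothing, i.e. $\frac{D}{d\tau}\overline{c}' = 0$, holds. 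This is a classical fact (see e.g. \cite{On83}); I would state it and then apply it with the function $f$ now carrying a parameter $s$.

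Concretely, write $\mathbf{x}(s,t)=\gamma_s(t)$ and let $f(s,t)$ be the smooth function determined by
\[
\frac{D}{dt}\frac{\partial\mathbf{x}}{\partial t}(s,t) = f(s,t)\,\frac{\partial\mathbf{x}}{\partial t}(s,t),
\]
which exists and is smooth in $(s,t)$ because $\mathbf{x}$ is a smooth variation by light rays and $\partial\mathbf{x}/\partial t$ never vanishes (the longitudinal curves are regular). Then the reparametrizing function $h=h(s,\tau)$ must satisfy, for each fixed $s$, the ODE
\[
\frac{\partial^2 h}{\partial\tau^2}(s,\tau) = f\!\left(s,h(s,\tau)\right)\left(\frac{\partial h}{\partial\tau}(s,\tau)\right)^{2},
\]
with initial conditions, say, $h(s,0)=0$ and $\frac{\partial h}{\partial\tau}(s,0)=1$. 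By the existence, uniqueness and smooth-dependence-on-parameters theorem for ODEs, there is a unique solution $h(s,\tau)$ defined on a neighbourhood of $\{\tau=0\}$ that is differentiable jointly in $(s,\tau)$; moreover $\frac{\partial h}{\partial\tau}>0$ near $\tau=0$, so $\overline{\gamma}_s(\tau)=\gamma_s(h(s,\tau))$ is a regular reparametrization covering the same light ray, and by construction each $\overline{\gamma}_s$ is a $\mathbf{g}$-geodesic. This $h$ is the required function.

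The main obstacle I anticipate is not the pointwise construction but the domain issue: a priori the solution $h(s,\tau)$ exists only on an $s$-dependent interval of $\tau$, and one wants a single rectangle $(a',b')\times(\alpha',\beta')$ on which $\mathbf{x}$ restricted through $h$ is defined; this is handled by shrinking the parameter intervals using compactness, exactly as one does when trimming geodesic variations, and the statement of the lemma tacitly allows such shrinking (it only asserts existence of $h$, not that it is defined on the original domain). A secondary point to be careful about is that $f(s,t)$ depends on the auxiliary metric $\mathbf{g}$ chosen in $\mathcal{C}$, which is fine since the lemma fixes $\mathbf{g}$ first; and one should note, as in Remark \ref{remark-Jacobi-init-val}, that the affine-reparametrization freedom ($\tau\mapsto a\tau+b$) means the choice of initial conditions for $h$ is immaterial for the geometric conclusion. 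Once $h$ is in hand, Lemma \ref{lema00351} applies to the geodesic variation $\overline{\mathbf{x}}(s,\tau)=\mathbf{x}(s,h(s,\tau))$, which is the purpose of the lemma.
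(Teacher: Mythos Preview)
Your proposal is correct and follows the same line as the paper: both start from the pregeodesic equation
\[
\frac{D}{dt}\frac{\partial\mathbf{x}}{\partial t}(s,t)=f(s,t)\,\frac{\partial\mathbf{x}}{\partial t}(s,t)
\]
and then produce the reparametrization $h$. The only difference is that where you invoke the smooth-dependence theorem for the second-order ODE in $h$, the paper writes down the explicit quadrature
\[
h_s^{-1}(t)=\int_0^t e^{\int_0^x f(s,y)\,dy}\,dx
\]
and leaves the verification to the reader; this has the advantage of being visibly smooth in $(s,t)$ on the whole rectangle (and strictly monotone in $t$), so the domain-shrinking issue you raise does not arise. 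One small slip: your displayed ODE should carry a minus sign, $\dfrac{\partial^2 h}{\partial\tau^2}=-f\!\left(s,h\right)\left(\dfrac{\partial h}{\partial\tau}\right)^{2}$, but this does not affect the argument.
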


\begin{proof}
Since each $\gamma_s$ is a segment of light ray then $\gamma_s=\gamma_s\left(t\right)$ is a pregeodesic related to $\mathbf{g}$.
Hence
\[
\frac{D\gamma_s '\left(t\right)}{dt}=\frac{D}{dt}\frac{\partial \mathbf{x}}{\partial t}\left(s,t\right)=f\left(s,t\right) \gamma'_s\left(t\right)
\]
where $f$ is differentiable and $\frac{D}{dt}$ denotes the covariant derivative related to $\mathbf{g}$ along $\gamma_s\left(t\right)$.

It is enough to check that the function $h\left(s,\tau\right)=h_s\left(\tau\right)$ defined by 
\begin{equation}\label{h-inversa}
h^{-1}_{s}\left(t\right)  = \int_{0}^{t} e^{ \int_{0}^{x} f\left(s,y\right)dy}dx
\end{equation}
makes of $\overline{\gamma}_s=\gamma_s \circ h$ a null geodesic with respect to the metric $\mathbf{g}$. 
\hfill$\Box$\bigskip

\end{proof}

Lemma \ref{lem00200} shows that any differentiable curve $\Gamma\subset \mathcal{N}$ defines a variation by light rays $\mathbf{x}$ such that the longitudinal curves of $\mathbf{x}$ corresponds to points in $\Gamma$.
This variation is not unique by construction.

\begin{lemma}\label{lem00200}
Given a differentiable curve $\Gamma:I\rightarrow \mathcal{N}$ such that $0\in I$ and $\Gamma\left(s\right)=\gamma_s \subset M$, then there exists a variation by light rays $\mathbf{x}:\left(-\epsilon, \epsilon\right)\times \left(-\delta, \delta\right)\rightarrow M$ verifying
\[
\mathbf{x}\left(s,t\right)=\gamma_s\left(t\right)
\]
for all $\left(s,t\right)\in \left(-\epsilon, \epsilon\right)\times \left(-\delta, \delta\right)$.
Moreover, the variation $\mathbf{x}$ can be written as 
\[
\mathbf{x}\left(s,t\right)=\mathrm{exp}_{\pi^{\mathbb{N^{+}}}_{M}\left(v\left(s\right)\right)}\left(t v\left(s\right)\right)
\]
where $v:\left(-\epsilon,\epsilon\right)\rightarrow \mathbb{N}^{+}\left(C\right)$ is a differentiable curve.
\end{lemma}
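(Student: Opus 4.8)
The plan is to realise $\Gamma$ as a smooth curve $v$ taking values in $\mathbb{N}^{+}\left(C\right)$ by inverting the chain of diffeomorphisms \eqref{cadena-espacios}, and then to take for $\mathbf{x}$ the geodesic variation by light rays generated by $v$ through the exponential map, exactly as in the construction preceding Proposition \ref{prop00254}. Note that, so obtained, $\mathbf{x}$ will in fact be a \emph{geodesic} variation by light rays, so Lemma \ref{lema00351} applies to it directly.

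First I would localise. Since $M$ is strongly causal, the light ray $\Gamma\left(0\right)$ passes through some point $p_{0}\in M$ admitting a causally convex, globally hyperbolic open neighbourhood $V$ with a smooth spacelike Cauchy surface $C\subset V$ --- the same data used to build the charts of $\mathcal{N}$. A future-pointing tangent vector of $\Gamma\left(0\right)$ at $p_{0}$ lies in $\mathbb{N}^{+}\left(V\right)$, so $\Gamma\left(0\right)\in\mathcal{U}:=p_{\mathbb{N}}\left(\mathbb{N}^{+}\left(V\right)\right)$, which is open in $\mathcal{N}$; by continuity of $\Gamma$ I may pick $\epsilon_{0}>0$ with $\left(-\epsilon_{0},\epsilon_{0}\right)\subset I$ and $\Gamma\left(\left(-\epsilon_{0},\epsilon_{0}\right)\right)\subset\mathcal{U}$. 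By \eqref{cadena-espacios} the composite $p_{\mathbb{N}}|_{\Omega^{T}\left(C\right)}=\sigma\circ\big(\pi^{\mathbb{N}^{+}}_{\mathbb{PN}}|_{\Omega^{T}\left(C\right)}\big)\colon\Omega^{T}\left(C\right)\to\mathcal{U}$ is a diffeomorphism; writing $\iota\colon\mathcal{U}\to\Omega^{T}\left(C\right)\subset\mathbb{N}^{+}\left(C\right)$ for its inverse, I would set
\[
v=\iota\circ\Gamma|_{\left(-\epsilon_{0},\epsilon_{0}\right)}\colon\left(-\epsilon_{0},\epsilon_{0}\right)\longrightarrow\mathbb{N}^{+}\left(C\right),
\]
which is differentiable. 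Since $p_{\mathbb{N}}\circ\iota=\mathrm{id}_{\mathcal{U}}$, for every $s$ one has $p_{\mathbb{N}}\left(v\left(s\right)\right)=\Gamma\left(s\right)=\gamma_{s}$; that is, the null $\mathbf{g}$-geodesic starting at $\pi^{\mathbb{N}^{+}}_{M}\left(v\left(s\right)\right)\in C$ with velocity $v\left(s\right)$ is a parametrisation of the light ray $\gamma_{s}$.

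Then I would define $\mathbf{x}\left(s,t\right)=\mathrm{exp}_{\pi^{\mathbb{N}^{+}}_{M}\left(v\left(s\right)\right)}\left(t\,v\left(s\right)\right)$. After shrinking $\epsilon_{0}$ to some $\epsilon>0$ for which $\overline{v\left(\left(-\epsilon,\epsilon\right)\right)}$ is a compact subset of $\mathbb{N}^{+}$, smoothness of the geodesic flow yields a $\delta>0$ such that $\mathbf{x}$ is defined and differentiable on $\left(-\epsilon,\epsilon\right)\times\left(-\delta,\delta\right)$. For each fixed $s$, the longitudinal curve $\mathbf{x}\left(s,\cdot\right)$ is the null geodesic with non-zero initial velocity $v\left(s\right)$, hence a regular geodesic whose image is a segment of the light ray $p_{\mathbb{N}}\left(v\left(s\right)\right)=\gamma_{s}$; thus $\mathbf{x}$ is a (geodesic) variation by light rays satisfying $\mathbf{x}\left(s,t\right)=\gamma_{s}\left(t\right)$ in the parametrisation singled out above, and it is already in the asserted form. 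The construction is visibly non-unique, depending on the choices of $V$, $C$ and the timelike field $T$.

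The only genuinely delicate step is the first one --- producing, from strong causality alone, a globally hyperbolic chart neighbourhood of $\Gamma\left(0\right)$ equipped with a spacelike Cauchy surface --- but this is exactly the input already supplied by the chart construction of $\mathcal{N}$, so it is legitimate to invoke it here. The uniform choice of $\delta$ is a routine compactness argument, and everything else is bookkeeping with the diffeomorphisms of \eqref{cadena-espacios}.
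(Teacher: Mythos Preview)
Your proof is correct and follows essentially the same route as the paper: localise $\Gamma$ into a chart neighbourhood $\mathcal{U}\simeq\mathbb{PN}(C)$, lift the curve to $\mathbb{N}^{+}(C)$ via a section, and apply the exponential map. The only cosmetic difference is that the paper allows an arbitrary smooth section $z\colon\mathbb{PN}(C)\to\mathbb{N}^{+}(C)$ of $\pi^{\mathbb{N}^{+}}_{\mathbb{PN}}$, whereas you use the specific section furnished by the diffeomorphism $\Omega^{T}(C)\to\mathbb{PN}(C)$; your choice is simply one instance of the paper's $z$, and your added remarks on the compactness argument for $\delta$ and on the variation being geodesic are welcome clarifications.
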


\begin{proof}
Consider the restriction $\pi=\left.\pi^{\mathbb{N}^{+}}_{\mathbb{PN}}\right|_{\mathbb{N}^{+}\left(C\right)}:\mathbb{N}^{+}\left(C\right)\rightarrow \mathbb{PN}\left(C\right)$ and the diffeomorphism $\sigma:\mathbb{PN}\left(C\right)\rightarrow \mathcal{U}$ in the diagram \ref{diagrama002}, where $\mathcal{U}\subset \mathcal{N}$ and $V\subset M$ are open such that $V$ is globally hyperbolic and $C\subset V$ is a Cauchy surface of $V$ and moreover $\gamma_0\in \mathcal{U}$, in such a way the following diagram arise
\begin{equation}\label{diagrama004}
\begin{tikzpicture}[every node/.style={midway}]
\matrix[column sep={6em,between origins},
        row sep={2em}] at (0,0)
{ \node(PN1)   {$\mathbb{PN}\left( C\right)$}  ; & \node(N) {$\mathcal{U}$}; \\
  \node(N2) {$\mathbb{N}^{+}\left( C\right)$};                   \\};
\draw[->] (PN1) -- (N) node[anchor=south]  {$\sigma $};
\draw[->] (N2) -- (N) node[anchor=north]  {\hspace{5mm}  $\sigma \circ\pi$};
\draw[<-] (PN1)   -- (N2) node[anchor=east] {$\pi$};
\end{tikzpicture}
\end{equation}
Also consider the canonical projection $\pi^{\mathbb{N}^{+}}_{M}:\mathbb{N}^{+}\rightarrow M$ and the exponential map $\mathrm{exp}:\left(-\delta,\delta\right)\times \mathbb{N}^{+}\rightarrow M$ defined by $\mathrm{exp}\left(t,v\right)=\mathrm{exp}_{\pi_{M}^{\mathbb{N}^{+}}\left(v\right)}\left(tv\right)$.
Fix $\epsilon>0$ such that $\Gamma\left(s\right)\in \mathcal{U}$ for all $s\in \left(-\epsilon, \epsilon\right)$ and let $z:\mathbb{PN}\left(C\right)\rightarrow \mathbb{N}^{+}\left(C\right)$ be a section of $\pi$ that, without restriction of generality, can be considered a global section due to the locality of $\pi$.
Naming $v\left(s\right)=z\circ \sigma^{-1}\circ \Gamma\left(s\right)$ for $s\in \left(-\epsilon,\epsilon\right)$, then we can define a variation $\mathbf{x}:\left(-\epsilon, \epsilon\right)\times \left(-\delta, \delta\right)\rightarrow M$ by $\mathbf{x}\left(s,t\right)=\mathrm{exp}\left(t,v\left(s\right)\right)=\mathrm{exp}_{\pi^{\mathbb{N^{+}}}_{M}\left(v\left(s\right)\right)}\left(t v\left(s\right)\right)$.
By construction as a composition of differentiable maps, $\mathbf{x}$ is differentiable.
Moreover, since $ v\left(s\right)$ is the initial vector of the geodesic $\gamma^{\mathbf{x}}_s$ defined by $\mathbf{x}\left(s,t\right)=\gamma^{\mathbf{x}}_s\left(t\right)$, then
\[
\gamma^{\mathbf{x}}_s = \sigma \circ \pi\left(v\left(s\right)\right) =\sigma \circ \pi\circ z\circ \sigma^{-1}\circ \Gamma\left(s\right)=\sigma \circ \sigma^{-1}\circ \Gamma\left(s\right)=\Gamma\left(s\right)
\]
for all $s\in\left(-\epsilon,\epsilon\right)$, and the lemma follows.
\hfill$\Box$\bigskip

\end{proof}

\begin{lemma}\label{lem00210}
Given a variation $\mathbf{x}:\left(-\epsilon, \epsilon\right)\times \left(-\delta, \delta\right)\rightarrow M$ by light rays such that $\mathbf{x}\left(s,t\right)=\gamma_{s}^{\mathbf{x}}\left(t\right)$, then the curve $\Gamma^{\mathbf{x}}:I\rightarrow \mathcal{N}$ verifying $\Gamma^{\mathbf{x}}\left(s\right)=\gamma_{s}^{\mathbf{x}}$ is differentiable.
\end{lemma}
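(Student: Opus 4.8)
The plan is to reduce the statement to a local one around an arbitrary parameter value $s_0\in I$ and, near $s_0$, to exhibit an explicit differentiable lift of $\Gamma^{\mathbf{x}}$ through one of the coordinate charts of $\mathcal{N}$ built in Section \ref{sec:seccion-estruc-dif}. Concretely, I would choose an open, causally convex, globally hyperbolic $V\subset M$ with a smooth spacelike Cauchy surface $C\subset V$ such that the light ray $\gamma_{s_0}^{\mathbf{x}}$ meets $V$; since $\mathbf{x}$ is continuous, after shrinking $\epsilon$ one may assume $\mathbf{x}(s,t)\in V$ on a neighbourhood of $(s_0,0)$, so that every $\gamma_s^{\mathbf{x}}$ with $s$ near $s_0$ meets $V$ and hence lies in the open set $\mathcal{U}=p_{\mathbb{PN}}\left(\mathbb{PN}\left(V\right)\right)$, together with the associated diffeomorphism $\sigma=\left.p_{\mathbb{PN}}\right|_{\mathbb{PN}\left(C\right)}:\mathbb{PN}\left(C\right)\rightarrow\mathcal{U}$ of diagram \ref{diagrama002}. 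It then suffices to prove that $s\mapsto\sigma^{-1}\left(\Gamma^{\mathbf{x}}\left(s\right)\right)\in\mathbb{PN}\left(C\right)$ is differentiable, since composing with any coordinate chart of $\mathbb{PN}\left(C\right)$ yields a chart of $\mathcal{N}$ around $\gamma_{s_0}^{\mathbf{x}}$.

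Second, I would identify that lift explicitly. Consider the differentiable map $\left(s,t\right)\mapsto\frac{\partial\mathbf{x}}{\partial t}\left(s,t\right)\in TM$; for fixed $s$ its image is the velocity field of the null pregeodesic $\gamma_s^{\mathbf{x}}$, so (replacing $t$ by $-t$ if necessary to fix the time orientation) it takes values in $\mathbb{N}^{+}$. The crucial point is that there is a differentiable function $s\mapsto t\left(s\right)$, defined near $s_0$, with $\mathbf{x}\left(s,t\left(s\right)\right)\in C$: writing $C$ locally as the zero set of a submersion $f$, the function $F\left(s,t\right)=f\left(\mathbf{x}\left(s,t\right)\right)$ satisfies $\frac{\partial F}{\partial t}\left(s_0,t_0\right)=df\left(\left(\gamma_{s_0}^{\mathbf{x}}\right)'\left(t_0\right)\right)\neq 0$, because a null vector cannot be tangent to the spacelike hypersurface $C$; the implicit function theorem then produces the desired differentiable $t\left(s\right)$. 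Consequently $s\mapsto\frac{\partial\mathbf{x}}{\partial t}\left(s,t\left(s\right)\right)\in\mathbb{N}^{+}\left(C\right)$ is differentiable, and composing with the submersion $\pi^{\mathbb{N}^{+}}_{\mathbb{PN}}$ gives a differentiable curve $s\mapsto\left[\frac{\partial\mathbf{x}}{\partial t}\left(s,t\left(s\right)\right)\right]\in\mathbb{PN}\left(C\right)$.

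Third, I would check that this curve is precisely $\sigma^{-1}\circ\Gamma^{\mathbf{x}}$. Indeed, writing $u=\frac{\partial\mathbf{x}}{\partial t}\left(s,t\left(s\right)\right)$, we have $\sigma\left(\left[u\right]\right)=p_{\mathbb{PN}}\left(\left[u\right]\right)=\left[\gamma_u\right]$, and $u$ is tangent to the pregeodesic $\gamma_s^{\mathbf{x}}$, so $\gamma_u$ has the same image in $M$ as $\gamma_s^{\mathbf{x}}$; hence $\left[\gamma_u\right]=\gamma_s^{\mathbf{x}}=\Gamma^{\mathbf{x}}\left(s\right)$. Therefore $\sigma^{-1}\circ\Gamma^{\mathbf{x}}$ agrees near $s_0$ with the differentiable curve just built, so $\Gamma^{\mathbf{x}}$ is differentiable at $s_0$; as $s_0\in I$ was arbitrary, the lemma follows.

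I expect the only genuinely non-formal step to be the transversality observation that makes the implicit function theorem applicable, namely that the null pregeodesics $\gamma_s^{\mathbf{x}}$ cross the spacelike Cauchy surface $C$ with $\frac{\partial F}{\partial t}\neq 0$; the rest is bookkeeping with the diffeomorphisms of diagram \ref{diagrama002}. If one prefers to work with honest $\mathbf{g}$-geodesics instead of pregeodesics, Lemma \ref{lemma-g-geodesic} supplies an $s$-differentiable reparametrization $h\left(s,\tau\right)$, but this is not strictly necessary here since $\mathbb{PN}$ records only null directions.
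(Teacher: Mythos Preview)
Your argument is correct, but it takes a more laborious route than the paper's. The paper bypasses charts entirely: it simply observes that
\[
\lambda(s)=d\mathbf{x}_{(s,0)}\!\left(\tfrac{\partial}{\partial t}\right)_{(s,0)}\in\mathbb{N}^{+}
\]
is a differentiable curve in $\mathbb{N}^{+}$, and then composes with the submersion $p_{\mathbb{N}^{+}}:\mathbb{N}^{+}\to\mathcal{N}$ of Proposition~\ref{prop00150} to get $\Gamma^{\mathbf{x}}=p_{\mathbb{N}^{+}}\circ\lambda$, which is smooth as a composition of smooth maps. No Cauchy surface, no implicit function theorem, no chart $\sigma$ are needed.

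What you do differently is to insist on landing in the specific coordinate model $\mathbb{PN}(C)$; this forces you to produce a differentiable function $t(s)$ with $\mathbf{x}(s,t(s))\in C$, for which you correctly invoke transversality of null directions to the spacelike $C$ and the implicit function theorem. This is perfectly valid and has the virtue of being explicit in the chart, but the transversality/IFT step is extra work that the paper's global submersion viewpoint avoids. Your closing remark that Lemma~\ref{lemma-g-geodesic} is unnecessary here is also on the mark: only null \emph{directions} matter, and $p_{\mathbb{N}^{+}}$ (or $p_{\mathbb{PN}}$) already quotients out reparametrizations.
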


\begin{proof}
Let $\mathbf{x}:\left(-\epsilon, \epsilon\right)\times \left(-\delta, \delta\right) \rightarrow M$ be a variation by light rays such that  $\gamma_{s}^{\mathbf{x}} \left(t\right)=\mathbf{x}\left(s,t\right)$.
Then the curve
\[
\lambda\left(s \right)= d\mathbf{x}_{\left(s,0\right)}\left(\frac{\partial}{\partial t}\right)_{\left(s,0\right)}\in \mathbb{N}^{+}
\]
is clearly differentiable.
If $p_{\mathbb{N}^{+}}:\mathbb{N}^{+} \rightarrow \mathcal{N}$ is the submersion of proposition \ref{prop00150}, then $p_{\mathbb{N}^{+}}\circ \lambda: I\rightarrow \mathcal{N}$ is differentiable in $\mathcal{N}$, by composition of differentiable maps.
Since
\[
p_{\mathbb{N}^{+}}\circ\lambda\left(s\right)=
p_{\mathbb{N}^{+}}\left(\left(\gamma_{s}^{\mathbf{x}}\right)'\left(0\right)\right)=\gamma_{s}^{\mathbf{x}}=\Gamma^{\mathbf{x}}\left(s\right).
\]
then $\Gamma^{\mathbf{x}}$ is also differentiable.
\hfill$\Box$\bigskip

\end{proof}

Let us adopt the notation used in lemma \ref{lem00210} and call $\Gamma^{\mathbf{x}}$ the curve in $\mathcal{N}$ defined by the variation $\mathbf{x}$ by light rays such that if $\mathbf{x}\left(s,t\right)=\gamma_{s}^{\mathbf{x}}\left(t\right)$ then $\Gamma^{\mathbf{x}}\left(s\right)=\gamma_{s}^{\mathbf{x}}\in \mathcal{N}$.

Although the variations defined in lemma \ref{lem00200} are not unique, lemma \ref{lem00215} shows that all they define the same Jacobi field except by a term in the direction of $\gamma'$.

\begin{lemma}\label{lem00215}
Let $\overline{\mathbf{x}}:I\times \overline{H}\rightarrow M$ and $\mathbf{x}:I\times H\rightarrow M$ be variations by light rays such that $\Gamma^{\overline{\mathbf{x}}}\left(s\right)=\gamma^{\overline{\mathbf{x}}}_{s}$ and $\Gamma^{\mathbf{x}}\left(s\right)=\gamma^{\mathbf{x}}_{s}$ with $\gamma^{\overline{\mathbf{x}}}_{0}=\gamma^{\mathbf{x}}_{0}=\gamma\in \mathcal{N}$.
Let us denote by $\overline{J}$ and $J$ the Jacobi fields over $\gamma$ of $\overline{\mathbf{x}}$ and $\mathbf{x}$ respectively.
If $\Gamma^{\overline{\mathbf{x}}}=\Gamma^{\mathbf{x}}$ then $\overline{J}=J\left(\mathrm{mod}\gamma'\right)$.
\end{lemma}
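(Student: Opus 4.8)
The plan is to move the whole question to the lifted curves in $\mathbb{N}^{+}$ and then transport the remaining ambiguity through the isomorphism $\zeta$ of Proposition \ref{prop00254}. First I would use Lemma \ref{lemma-g-geodesic} to reparametrize $\mathbf{x}$ and $\overline{\mathbf{x}}$ so that they become \emph{geodesic} variations by light rays whose longitudinal curve at $s=0$ is exactly the $\mathbf{g}$--geodesic $\gamma$ (the residual affine freedom in the reparametrization can be fixed with Remark \ref{remark-Jacobi-init-val}, which does not alter the Jacobi fields). Such reparametrizations only reparametrize each longitudinal light ray, hence do not change the point of $\mathcal{N}$ it represents, so the hypothesis $\Gamma^{\overline{\mathbf{x}}}=\Gamma^{\mathbf{x}}$ is untouched; and now, by Lemma \ref{lema00351}, $J=V^{\mathbf{x}}$ and $\overline{J}=V^{\overline{\mathbf{x}}}$ are honest Jacobi fields along $\gamma$.

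Next I would introduce the differentiable curves $\lambda(s)=d\mathbf{x}_{(s,0)}(\partial/\partial t)$ and $\overline{\lambda}(s)=d\overline{\mathbf{x}}_{(s,0)}(\partial/\partial t)$ in $\mathbb{N}^{+}$, as in the proof of Lemma \ref{lem00210}; by the normalization both satisfy $\lambda(0)=\overline{\lambda}(0)=u_{0}:=\gamma'(0)$. Because $\mathbf{x}$ is a geodesic variation one has $\mathbf{x}(s,t)=\mathrm{exp}_{\mathbf{x}(s,0)}(t\lambda(s))$, so Proposition \ref{prop00254} identifies $J=\zeta(\lambda'(0))$ and, likewise, $\overline{J}=\zeta(\overline{\lambda}'(0))$, with $\lambda'(0),\overline{\lambda}'(0)\in T_{u_{0}}\mathbb{N}^{+}\subset T_{u_{0}}TM$. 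On the other hand Lemma \ref{lem00210} gives $\Gamma^{\mathbf{x}}=p_{\mathbb{N}^{+}}\circ\lambda$ and $\Gamma^{\overline{\mathbf{x}}}=p_{\mathbb{N}^{+}}\circ\overline{\lambda}$, so differentiating $\Gamma^{\overline{\mathbf{x}}}=\Gamma^{\mathbf{x}}$ at $s=0$ yields $\overline{\lambda}'(0)-\lambda'(0)\in\ker dp_{\mathbb{N}^{+}}|_{u_{0}}$. Since, by Proposition \ref{prop00150}, $p_{\mathbb{N}^{+}}$ is the submersion whose fibres are the leaves of $\mathcal{D}=\mathrm{span}\{X_{\mathbf{g}},\Delta\}$, this kernel is precisely $\mathcal{D}_{u_{0}}=\mathrm{span}\{X_{\mathbf{g}}(u_{0}),\Delta(u_{0})\}$.

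It then only remains to compute $\zeta(\mathcal{D}_{u_{0}})$. Plugging the flow of $\Delta$ (i.e. $u(s)=e^{s}u_{0}$ with constant base point) into the construction of Proposition \ref{prop00254} produces the Jacobi field $t\mapsto t\,\gamma'(t)$, and plugging the geodesic flow of $X_{\mathbf{g}}$ (i.e. $u(s)=\gamma_{u_{0}}'(s)$ based at $\gamma_{u_{0}}(s)$, so that $\mathbf{x}(s,t)=\gamma_{u_{0}}(s+t)$) produces $t\mapsto\gamma'(t)$; both can be read off directly from Lemma \ref{lemmaDC92}. Hence $\zeta(\mathcal{D}_{u_{0}})=\widehat{\mathcal{J}}_{0}(\gamma)\oplus\widehat{\mathcal{J}}'_{0}(\gamma)=\mathcal{J}_{0}(\gamma)$, and therefore
\[
\overline{J}-J=\zeta(\overline{\lambda}'(0))-\zeta(\lambda'(0))=\zeta\bigl(\overline{\lambda}'(0)-\lambda'(0)\bigr)\in\zeta(\mathcal{D}_{u_{0}})=\mathcal{J}_{0}(\gamma),
\]
which is exactly the assertion $\overline{J}=J\,(\mathrm{mod}\,\gamma')$. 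The step I expect to be most delicate is the opening normalization: one must check that Lemma \ref{lemma-g-geodesic} can indeed be applied to both variations with enough joint regularity in $(s,t)$ and in a way that leaves $\Gamma^{\mathbf{x}}$ and $\Gamma^{\overline{\mathbf{x}}}$ unchanged while making the two $s=0$ longitudinal curves literally coincide; once that is set up, the rest is linear bookkeeping through $\zeta$.
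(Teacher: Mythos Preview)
Your argument is correct but takes a genuinely different route from the paper's. The paper proceeds by an elementary chain--rule computation: after making (say) $\overline{\mathbf{x}}$ a geodesic variation via Lemma~\ref{lemma-g-geodesic}, the hypothesis $\Gamma^{\overline{\mathbf{x}}}=\Gamma^{\mathbf{x}}$ gives a smooth reparametrization $h(s,t)$ with $\overline{\mathbf{x}}(s,t)=\mathbf{x}(s,h(s,t))$, and differentiating in $s$ at $s=0$ immediately yields $\overline{J}(t)=J(h(0,t))+\partial_{s}h(0,t)\,\gamma'(t)$, i.e.\ $\overline{J}=J\,(\mathrm{mod}\,\gamma')$. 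Your approach instead lifts everything to $\mathbb{N}^{+}$, identifies the ambiguity as lying in $\ker dp_{\mathbb{N}^{+}}|_{u_{0}}=\mathcal{D}_{u_{0}}$, and then computes $\zeta(\mathcal{D}_{u_{0}})=\mathcal{J}_{0}(\gamma)$. This is exactly the strategy the paper reserves for the \emph{next} result, Lemma~\ref{lemma270} (there carried out through $\mathbb{PN}(C)$ rather than $\mathbb{N}^{+}$), and indeed your argument only uses $(\Gamma^{\mathbf{x}})'(0)=(\Gamma^{\overline{\mathbf{x}}})'(0)$, so you are in effect proving both lemmas at once. The paper's chain--rule proof is shorter and needs less machinery; your structural proof makes the role of the quotient $\mathbb{N}^{+}\to\mathcal{N}$ transparent and explains \emph{why} the indeterminacy is precisely $\mathcal{J}_{0}(\gamma)$. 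Your caveat about the opening normalization is well placed but not a real obstacle: Lemma~\ref{lemma-g-geodesic} provides a jointly smooth $h(s,\tau)$, the induced reparametrization leaves each $\Gamma$ unchanged since it only reparametrizes longitudinal curves, and the residual affine freedom at $s=0$ is harmless by Remark~\ref{remark-Jacobi-init-val}.
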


\begin{proof}
We have that $\overline{\mathbf{x}}\left(s,t\right)=\gamma^{\overline{\mathbf{x}}}_{s}\left(t\right)$ and $\mathbf{x}\left(s,\tau\right)=\gamma^{\mathbf{x}}_{s}\left(\tau\right)$. By lemma \ref{lemma-g-geodesic}, we can assume without any lack of generality, that $\gamma^{\overline{\mathbf{x}}}_{s}$ are null geodesics for the metric $\mathbf{g}\in\mathcal{C}$ giving new parameters if necessary.
If $\Gamma^{\overline{\mathbf{x}}}=\Gamma^{\mathbf{x}}$ then $\gamma^{\overline{\mathbf{x}}}_{s}=\gamma^{\mathbf{x}}_{s}$ for all $s\in I$.
Then there exist a differentiable function $h_s\left(t\right)=h\left(s,t\right)$ such that  $\overline{\mathbf{x}}\left(s,t\right)=\mathbf{x}\left(s,h\left(s,t\right)\right)$.
Hence we have that
\[
\frac{\partial\overline{\mathbf{x}}\left(s,t\right)}{\partial s}= \frac{\partial\mathbf{x}\left(s,h\left(s,t\right)\right)}{\partial s} + \frac{\partial h\left(s,t\right)}{\partial s}  \cdot \frac{\partial\mathbf{x}\left(s,h\left(s,t\right)\right)}{\partial \tau}
\]
then if $s=0$
\[
\overline{J}\left(t\right)= J\left(h\left(0,t\right)\right) + \frac{\partial h}{\partial s}\left(0,t\right) \cdot \gamma'\left(t\right)
\]
therefore $\overline{J}=J\left(\mathrm{mod}\gamma'\right)$.
\hfill$\Box$\bigskip

\end{proof}

We can wonder how a Jacobi field changes when another metric of the same conformal class is considered in $M$.
The following result shows it with a proof similar to the one of lemma \ref{lem00215}.

\begin{lemma}\label{JJbar}
Let $\mathbf{x}:I\times H \rightarrow M$ be a variation by light rays of $\gamma=\mathbf{x}\left(0,\cdot\right)$.
If $J\in \mathcal{J}_L\left(\gamma\right)$ is the Jacobi field of $\mathbf{x}$ along $\gamma$ related to the metric $\mathbf{g}\in\mathcal{C}$, then the Jacobi field $\overline{J}$ of $\mathbf{x}$ along $\gamma$ related to another metric $\overline{\mathbf{g}}\in\mathcal{C}$ verifies
\[
\overline{J}= J \left(\mathrm{mod}\gamma'\right).
\]
\end{lemma}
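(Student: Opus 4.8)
The plan is to imitate, almost verbatim, the proof of Lemma~\ref{lem00215}, the only new ingredient being that the two competing \emph{parametrizations} of the light ray appearing there are now replaced by the two competing \emph{geodesic parametrizations} associated with $\mathbf{g}$ and with $\overline{\mathbf{g}}$. The point that makes this possible is that conformally related metrics have the same unparametrized null geodesics, so every longitudinal curve $\gamma_s = \mathbf{x}\left(s,\cdot\right)$ of the variation $\mathbf{x}$ is a null pregeodesic for \emph{both} $\mathbf{g}$ and $\overline{\mathbf{g}}$; only the affine parameter along it changes when we pass from one metric to the other.

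First I would apply Lemma~\ref{lemma-g-geodesic} to the metric $\mathbf{g}$, obtaining a differentiable reparametrization $t=h\left(s,\tau\right)$ such that $\mathbf{x}_{\mathbf{g}}\left(s,\tau\right):=\mathbf{x}\left(s,h\left(s,\tau\right)\right)$ is a geodesic variation by light rays for $\mathbf{g}$; by Lemma~\ref{lema00351} its initial field along $\gamma_0^{\mathbf{g}}:=\mathbf{x}_{\mathbf{g}}\left(0,\cdot\right)$ is a $\mathbf{g}$--Jacobi field, and this is by definition the field $J$ in the statement. Applying Lemma~\ref{lemma-g-geodesic} once more, now to $\overline{\mathbf{g}}$, gives $t=\overline{h}\left(s,\rho\right)$ with $\mathbf{x}_{\overline{\mathbf{g}}}\left(s,\rho\right):=\mathbf{x}\left(s,\overline{h}\left(s,\rho\right)\right)$ a geodesic variation by light rays for $\overline{\mathbf{g}}$, whose initial field along $\gamma_0^{\overline{\mathbf{g}}}:=\mathbf{x}_{\overline{\mathbf{g}}}\left(0,\cdot\right)$ is $\overline{J}$. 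From the explicit formula (\ref{h-inversa}) one reads off $h_s\left(0\right)=\overline{h}_s\left(0\right)=0$ and that both have positive derivative in the fibre variable at $0$, so after shrinking the domains the family $k\left(s,\cdot\right):=h\left(s,\cdot\right)^{-1}\circ\overline{h}\left(s,\cdot\right)$ is a well defined differentiable family of local diffeomorphisms with $k\left(s,0\right)=0$, and it satisfies $\mathbf{x}_{\overline{\mathbf{g}}}\left(s,\rho\right)=\mathbf{x}_{\mathbf{g}}\left(s,k\left(s,\rho\right)\right)$.

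Next comes the computation, which is the same chain rule as in Lemma~\ref{lem00215}. Differentiating $\mathbf{x}_{\overline{\mathbf{g}}}\left(s,\rho\right)=\mathbf{x}_{\mathbf{g}}\left(s,k\left(s,\rho\right)\right)$ with respect to $s$ and setting $s=0$ gives
\[
\overline{J}\left(\rho\right)=J\left(k\left(0,\rho\right)\right)+\frac{\partial k}{\partial s}\left(0,\rho\right)\,\left(\gamma_0^{\mathbf{g}}\right)'\left(k\left(0,\rho\right)\right).
\]
Since $\gamma_0^{\overline{\mathbf{g}}}\left(\rho\right)=\gamma_0^{\mathbf{g}}\left(k\left(0,\rho\right)\right)$, the map $k\left(0,\cdot\right)$ is precisely the reparametrization identifying the $\mathbf{g}$--geodesic and the $\overline{\mathbf{g}}$--geodesic parametrizations of the light ray $\gamma$, while $\left(\gamma_0^{\mathbf{g}}\right)'$ spans, at each point, the null tangent line of $\gamma$, a line independent of the conformal representative. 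Reading the displayed identity pointwise along $\gamma\subset M$, it says exactly that $\overline{J}$ and $J$, regarded as vector fields along the light ray $\gamma$, differ by a field proportional to $\gamma'$, i.e. $\overline{J}=J\left(\mathrm{mod}\,\gamma'\right)$ — with the same understanding as in Lemma~\ref{lem00215} that the proportionality factor $\frac{\partial k}{\partial s}\left(0,\cdot\right)$ need not be affine, so the congruence is modulo the line field $\mathrm{span}\left\{\gamma'\right\}$ along $\gamma$ rather than modulo the $2$--dimensional space $\mathcal{J}_0$ attached to one fixed geodesic parametrization (and Remark~\ref{remark-Jacobi-init-val} takes care of the residual affine ambiguity in each parametrization).

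The only genuinely delicate step — the main obstacle — is the bookkeeping of domains and base points, so that $h_s$, $\overline{h}_s$ and $k_s$ are simultaneously defined on a fixed neighbourhood of the relevant parameter interval and depend smoothly on $s$; this is handled exactly as in Lemma~\ref{lemma-g-geodesic} via the explicit integral formula (\ref{h-inversa}), which also supplies $h_s\left(0\right)=\overline{h}_s\left(0\right)=0$ and the positivity of the fibre derivatives needed for $k$ to be a smooth family of local diffeomorphisms fixing $0$. Once this is in place, everything else is the verbatim chain-rule argument already carried out in the proof of Lemma~\ref{lem00215}.
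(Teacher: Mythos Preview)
Your argument is correct and follows essentially the same route as the paper's own proof. The only cosmetic difference is that the paper first absorbs the $\mathbf{g}$--reparametrization into the hypothesis (assuming without loss of generality that each $\gamma_s$ is already a $\mathbf{g}$--geodesic) and then applies Lemma~\ref{lemma-g-geodesic} once to pass to $\overline{\mathbf{g}}$, whereas you apply Lemma~\ref{lemma-g-geodesic} twice and compose to obtain your $k$; after that the chain-rule computation and the conclusion are identical.
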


\begin{proof}
Let $\mathbf{x}:I\times H \rightarrow M$ be a variation by light rays of $\gamma$ where $\mathbf{x}\left(s,t\right)=\gamma_s\left(t\right)$ with $\gamma=\gamma_0$.
By lemma \ref{lemma-g-geodesic}, we can assume that $\gamma_s$ is null geodesic related to the metric $\mathbf{g}$ and there exists changes of parameters $h_s:\overline {H}\rightarrow H$ such that $\overline{\gamma_s}\left(\tau\right)=\gamma_s\left(h_s\left(\tau\right)\right)$ are null geodesics related to $\overline{\mathbf{g}}\in \mathcal{C}$ for all $s\in I$ and where $h\left(s,\tau\right)=h_s\left(\tau\right)$ is a differentiable function.
So, we consider that $J\in \mathcal{J}_L\left(\gamma\right)$ is the Jacobi field of $\mathbf{x}$ along $\gamma$ and $\overline{J}\in \overline{\mathcal{J}}_L\left(\gamma\right)$ is the one of $\overline{\mathbf{x}}$.
Then $\overline{\mathbf{x}}\left(s,\tau\right)=\mathbf{x}\left(s,h_s\left(\tau\right)\right)$ and we have that
\begin{align*}
\overline{J}\left(\tau\right)&= \left.\frac{\partial \overline{\mathbf{x}}\left(s,\tau\right)}{\partial s}\right|_{\left(0,\tau\right)} = \left.\frac{\partial \mathbf{x}\left(s,h_s\left(\tau\right)\right)}{\partial s}\right|_{\left(0,\tau\right)} = \\
&= \frac{\partial \mathbf{x}}{\partial s}\left(0,h_0\left(\tau\right)\right) + \frac{\partial h}{\partial s}\left(0,\tau\right)\frac{\partial \mathbf{x}}{\partial t}\left(0,h_0\left(\tau\right)\right) = J\left(h_0\left(\tau\right)\right) + \frac{\partial h}{\partial s}\left(0,\tau\right)\gamma'\left(h_0\left(\tau\right)\right)
\end{align*}
therefore $\overline{J}= J \left(\mathrm{mod}\gamma'\right)$.
\hfill$\Box$\bigskip

\end{proof}

\begin{lemma}\label{lemma270}
Given two variations by light rays $\mathbf{x}:I\times H\rightarrow M$ and $\overline{\mathbf{x}}:\overline{I}\times \overline{H}\rightarrow M$ such that $\Gamma^{\mathbf{x}}\left(0\right)=\Gamma^{\overline{\mathbf{x}}}\left(0\right)=\gamma$. Let us denote by $J$ and $\overline{J}$ their corresponding Jacobi fields at $0\in I$ and $0\in \overline{I}$ of $\mathbf{x}$ and $\overline{\mathbf{x}}$ respectively.
If $\left(\Gamma^{\mathbf{x}}\right)'\left(0\right)=\left(\Gamma^{\overline{\mathbf{x}}}\right)'\left(0\right)$ then $J=\overline{J}\left(\mathrm{mod}\gamma'\right)$.
\end{lemma}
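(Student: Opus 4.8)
The plan is to reduce the statement, via Lemma \ref{lem00215}, to the canonical variations by light rays attached to the curves $\Gamma^{\mathbf{x}}$ and $\Gamma^{\overline{\mathbf{x}}}$ by Lemma \ref{lem00200}, and then to read off from Proposition \ref{prop00254} that the Jacobi field of such a canonical variation depends only on the $1$-jet at $s=0$ of its generating curve in $\mathbb{N}^{+}\left(C\right)$.

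First I would fix a metric $\mathbf{g}\in\mathcal{C}$ (by Lemma \ref{JJbar} the class $J\left(\mathrm{mod}\,\gamma'\right)$ is independent of this choice) and a chart neighbourhood $\mathcal{U}\ni\gamma$ as in diagram \ref{diagrama002}, coming from a globally hyperbolic $V\subset M$ with spacelike Cauchy surface $C$; shrinking $I$ and $\overline{I}$ one may assume $\Gamma^{\mathbf{x}}$ and $\Gamma^{\overline{\mathbf{x}}}$ take values in $\mathcal{U}$. With $z\colon\mathbb{PN}\left(C\right)\rightarrow\mathbb{N}^{+}\left(C\right)$ the section and $\sigma\colon\mathbb{PN}\left(C\right)\rightarrow\mathcal{U}$ the diffeomorphism of Lemma \ref{lem00200}, I would put $v\left(s\right)=z\circ\sigma^{-1}\circ\Gamma^{\mathbf{x}}\left(s\right)$ and $\overline{v}\left(s\right)=z\circ\sigma^{-1}\circ\Gamma^{\overline{\mathbf{x}}}\left(s\right)$, and let $\mathbf{y},\overline{\mathbf{y}}$ be the canonical variations $\mathbf{y}\left(s,t\right)=\mathrm{exp}_{\pi_{M}^{\mathbb{N}^{+}}\left(v\left(s\right)\right)}\left(t\,v\left(s\right)\right)$ and $\overline{\mathbf{y}}\left(s,t\right)=\mathrm{exp}_{\pi_{M}^{\mathbb{N}^{+}}\left(\overline{v}\left(s\right)\right)}\left(t\,\overline{v}\left(s\right)\right)$. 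By Lemma \ref{lem00200} one has $\Gamma^{\mathbf{y}}=\Gamma^{\mathbf{x}}$ and $\Gamma^{\overline{\mathbf{y}}}=\Gamma^{\overline{\mathbf{x}}}$, so Lemma \ref{lem00215} yields that the Jacobi field of $\mathbf{x}$ (resp. of $\overline{\mathbf{x}}$) along $\gamma$ equals modulo $\gamma'$ that of $\mathbf{y}$ (resp. of $\overline{\mathbf{y}}$). Thus it would be enough to show $J_{\mathbf{y}}=J_{\overline{\mathbf{y}}}\left(\mathrm{mod}\,\gamma'\right)$.

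Next I would compare $v$ and $\overline{v}$ near $s=0$. As $\Gamma^{\mathbf{x}}\left(0\right)=\Gamma^{\overline{\mathbf{x}}}\left(0\right)=\gamma$ and $z\circ\sigma^{-1}$ is smooth, $v\left(0\right)=\overline{v}\left(0\right)=:v_{0}$; and from $\left(\Gamma^{\mathbf{x}}\right)'\left(0\right)=\left(\Gamma^{\overline{\mathbf{x}}}\right)'\left(0\right)$ in $T_{\gamma}\mathcal{N}$, applying the differential of $z\circ\sigma^{-1}$ at $\gamma$ gives $v'\left(0\right)=\overline{v}'\left(0\right)=:\xi\in T_{v_{0}}TM$. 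The vector $v_{0}$ is null over the point $C\cap\gamma$ and tangent to $\gamma$, so $\gamma_{v_{0}}\left(t\right)=\mathrm{exp}_{\pi_{M}^{\mathbb{N}^{+}}\left(v_{0}\right)}\left(t\,v_{0}\right)$ is a fixed $\mathbf{g}$-affine parametrization of the light ray $\gamma$ and $\mathbf{y},\overline{\mathbf{y}}$ are geodesic variations of it. These are exactly the variations of Proposition \ref{prop00254} associated with the curves $v,\overline{v}\colon\left(-\epsilon,\epsilon\right)\rightarrow TM$, hence $J_{\mathbf{y}}=\zeta\left(v'\left(0\right)\right)=\zeta\left(\xi\right)=\zeta\left(\overline{v}'\left(0\right)\right)=J_{\overline{\mathbf{y}}}$ for the same linear isomorphism $\zeta\colon T_{v_{0}}TM\rightarrow\mathcal{J}\left(\gamma_{v_{0}}\right)$ --- in fact an honest equality, not just modulo $\gamma'$ --- and therefore $J=\overline{J}\left(\mathrm{mod}\,\gamma'\right)$. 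The hard part will be the bookkeeping of parametrizations in the reduction step: Lemma \ref{lem00215} identifies the Jacobi field of $\mathbf{x}$ with that of $\mathbf{y}$ along possibly different affine parametrizations of $\gamma$, but by Remark \ref{remark-Jacobi-init-val} an affine change of parameter leaves the Jacobi field unchanged as a geometric object, so the identification modulo $\gamma'$ is unambiguous; checking this carefully is where most of the routine work sits.
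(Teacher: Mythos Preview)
Your proof is correct and follows essentially the same route as the paper: reduce via Lemma \ref{lem00215} to canonical variations built from Lemma \ref{lem00200}, and then compare the resulting Jacobi fields through Proposition \ref{prop00254}. The one noteworthy difference is that you build both canonical variations using the \emph{same} section $z\circ\sigma^{-1}\colon\mathcal{U}\to\mathbb{N}^{+}(C)$, which immediately gives $v'(0)=\overline{v}'(0)$ in $T_{v_0}TM$ and hence an exact equality $J_{\mathbf{y}}=J_{\overline{\mathbf{y}}}$; the paper instead allows the two lifts to differ along the fibre of $\pi^{\mathbb{N}^{+}}_{\mathbb{PN}}$, obtaining only $v'(0)=\overline{v}'(0)+\mu\,c'(0)$ with $c(s)=e^{s}v_0$, and then has to trace this extra term through Remark \ref{remark-lem00250} and Lemma \ref{lemmaDC92} to see that it contributes precisely $\mu\gamma'(0)$ to the initial data. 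Your shortcut is cleaner, while the paper's version makes explicit where the $(\mathrm{mod}\,\gamma')$ ambiguity actually enters.
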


\begin{proof}
Due to we want to compare the Jacobi fields $J$ and $\overline{J}$ on $\gamma$, we can assume without any lack of generality that $\mathbf{x}$ as well as $\overline{\mathbf{x}}$ provide the same geodesic parameter for $\gamma$, then by lemmas \ref{lem00200} and \ref{lem00215}, we can consider that $\mathbf{x}\left(s,t\right)=\mathrm{exp}_{\alpha\left(s\right)}\left(t u\left(s\right)\right)$ and $\overline{\mathbf{x}}\left(r,t\right)=\mathrm{exp}_{\overline{\alpha}\left(r\right)}\left(t \overline{u}\left(r\right)\right)$ where $u=u\left(0\right)=\overline{u}\left(0\right)$ and also $p=\alpha\left(0\right)=\overline{\alpha}\left(0\right)$.

Moreover, we can assume the diagram \ref{diagrama004} holds.

Since $\left(\Gamma^{\mathbf{x}}\right)'\left(0\right)=\left(\Gamma^{\overline{\mathbf{x}}}\right)'\left(0\right)$ then we have
\[
d\sigma_{\left[v\left(0\right)\right]}\circ d\pi_{v\left(0\right)}\left(v'\left(0\right)\right)=d\sigma_{\left[\overline{v}\left(0\right)\right]}\circ d\pi_{\overline{v}\left(0\right)}\left(\overline{v}'\left(0\right)\right) \Leftrightarrow d\pi_{v\left(0\right)}\left(v'\left(0\right)\right)= d\pi_{\overline{v}\left(0\right)}\left(\overline{v}'\left(0\right)\right)
\]
Observe that $\left[v\left(0\right)\right]=\left[\overline{v}\left(0\right)\right]$ and thus, $d\pi_{v\left(0\right)}=d\pi_{\overline{v}\left(0\right)}$, and its kernel is the subspace generated by the tangent vector at $s=0$ of the curve $c\left(s\right)=e^s v\left(0\right)$, hence
\begin{equation}\label{expresion-vprima}
v'\left(0\right)= \overline{v}'\left(0\right) + \mu c'\left(0\right)
\end{equation}
with $\mu\in\mathbb{R}$.
By remark \ref{remark-lem00250}, we have that
\[
\left\{
\begin{tabular}{l}
$\alpha'\left(0\right)=\overline{\alpha}'\left(0\right)$ \\
$\frac{Du}{ds}\left( 0\right) = \frac{D\overline{u}}{dr}\left( 0\right) + \mu\frac{Dc}{ds}\left( 0\right)$
\end{tabular}
\right.
\Rightarrow
\left\{
\begin{tabular}{l}
$\alpha'\left(0\right)=\overline{\alpha}'\left(0\right)$ \\
$\frac{Du}{ds}\left( 0\right) = \frac{D\overline{u}}{dr}\left( 0\right) + \mu\gamma'\left( 0\right)$
\end{tabular}
\right.
\]
therefore we conclude that $J=\overline{J}\left(\mathrm{mod}\gamma'\right)$.
\hfill$\Box$\bigskip

\end{proof}

The differentiable structure of $\mathcal{N}$ has been built in section~\ref{sec:seccion-estruc-dif} from the one in $\mathbb{PN}\left(C\right)$ where $C$ is a local spacelike Cauchy surface.
So, we will identify the tangent space $T_{\gamma}\mathcal{N}$ with some quotient space of $\mathcal{J}_L\left(\gamma\right)$ via a tangent space of $\mathbb{PN}\left(C\right)$.

\begin{proposition}\label{prop00399}
Given $\xi\in T_{\gamma_{u_0} }\mathcal{N}$ such that $\Gamma'\left(0\right)=\xi$ for some curve $\Gamma \subset \mathcal{N}$. Let $\mathbf{x}=\mathbf{x}\left( s,t\right) $ be a variation by light rays of $\gamma_{u_0}$ verifying that $\Gamma^{\mathbf{x}}=\Gamma $ such that $J\in \mathcal{L}\left( \gamma_{u_0} \right)$ is the Jacobi field over $\gamma_{u_0}$ of $\mathbf{x}$.
If $\zeta :T_{\gamma_{u_0} }\mathcal{N} \rightarrow \mathcal{L}\left( \gamma_{u_0} \right)$ is the map defined by
\[
\overline{\zeta}\left(\xi\right)=J\left(\mathrm{mod}\gamma'_{u_0}\right)
\]
then $\overline{\zeta}$ is well--defined and a linear isomorphism.
\end{proposition}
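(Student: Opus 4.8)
The plan is to reduce the statement to two facts already available: Proposition~\ref{prop00254}, which identifies $T_{u_0}TM$ with $\mathcal{J}(\gamma_{u_0})$ via the isomorphism $\zeta$, and the three ``independence'' lemmas \ref{lem00215}, \ref{JJbar}, \ref{lemma270}, which together control exactly how much ambiguity is introduced by the choices made in the construction (the auxiliary variation $\mathbf{x}$, the metric $\mathbf{g}\in\mathcal{C}$, and the representative curve $\Gamma$). First I would set up the geometric picture using the chain of diffeomorphisms \eqref{cadena-espacios}: a tangent vector $\xi\in T_\gamma\mathcal{N}$ corresponds, under $\sigma$ and the diffeomorphism $\mathbb{PN}(C)\leftrightarrow\Omega^T(C)\hookrightarrow\mathbb{N}^+(C)\hookrightarrow TM$, to a vector in $T_{u_0}TM$ where $u_0\in\Omega^T(C)$ is the canonical representative of $\gamma$; apply $\zeta$ to obtain a genuine Jacobi field and then project to $\mathcal{L}(\gamma)=\mathcal{J}_L(\gamma)/\mathcal{J}_0(\gamma)$. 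This defines a candidate inverse, and one checks it lands in $\mathcal{J}_L(\gamma)$ because the longitudinal curves are null (the computation preceding the definition of $\mathcal{J}_L$ shows $\mathbf{g}(J,\gamma')$ is constant for any variation by light rays).

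The core of the argument is well-definedness of $\overline{\zeta}$ itself, i.e. that the class $J(\mathrm{mod}\,\gamma'_{u_0})$ depends only on $\xi$ and not on the chosen variation $\mathbf{x}$ with $\Gamma^{\mathbf{x}}=\Gamma$, nor on $\Gamma$ itself among curves with $\Gamma'(0)=\xi$, nor on the metric. I would dispatch these in order: independence of the choice of $\mathbf{x}$ realizing a fixed curve $\Gamma$ is Lemma~\ref{lem00215}; independence of the metric $\mathbf{g}\in\mathcal{C}$ is Lemma~\ref{JJbar}; and independence of the representative curve $\Gamma$ (two curves $\Gamma,\overline{\Gamma}$ through $\gamma$ with the same velocity $\xi$ at $s=0$) is exactly Lemma~\ref{lemma270}. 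Chaining the three congruences $\mathrm{mod}\,\gamma'$ shows that $J(\mathrm{mod}\,\gamma')\in\mathcal{L}(\gamma)$ is unambiguously attached to $\xi$, and by Lemma~\ref{lem00200} such a variation $\mathbf{x}$ always exists, so $\overline{\zeta}$ is defined on all of $T_\gamma\mathcal{N}$.

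For linearity I would argue that $\overline{\zeta}$ is, up to the identifications above, the composite of $d p_{\mathbb{PN}}\circ\sigma^{-1}$-type linear maps with $\zeta$ from Proposition~\ref{prop00254} followed by the linear quotient projection $\mathcal{J}_L(\gamma)\to\mathcal{L}(\gamma)$; concretely, given $\xi_i$ with representative curves $v_i(s)\in\mathbb{N}^+(C)$, the curve $s\mapsto a_1 v_1(s)+a_2 v_2(s)$ (parametrized so that its $s=0$ value is $u_0$, after rescaling) realizes $a_1\xi_1+a_2\xi_2$, and remark~\ref{remark-lem00250} together with Lemma~\ref{lemmaDC92} shows the initial data $(J(0),J'(0))$ of the associated Jacobi field depend linearly on $v'(0)$; the ambiguity $\mathcal{J}_0(\gamma)$ is precisely killed in the quotient. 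Finally, for the isomorphism property: $\dim\mathcal{L}(\gamma)=2m-3=\dim\mathcal{N}=\dim T_\gamma\mathcal{N}$, so it suffices to prove injectivity. If $\overline{\zeta}(\xi)=0$ then the Jacobi field $J$ of some variation $\mathbf{x}$ realizing $\xi$ lies in $\mathcal{J}_0(\gamma)$, i.e. $J(t)=(a+bt)\gamma'(t)$; tracing this back through $\zeta$ (which is injective by Proposition~\ref{prop00254}) and the submersion $p_{\mathbb{N}^+}$, the corresponding tangent vector in $TM$ is tangent to the leaf of $\mathcal{D}=\mathrm{span}\{X_\mathbf{g},\Delta\}$, hence projects to $0$ in $T_\gamma\mathcal{N}$, so $\xi=0$.

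\smallskip

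I expect the main obstacle to be the bookkeeping in the \emph{linearity} step: one must choose, consistently, representatives $v_i(s)\in\mathbb{N}^+(C)$ (or in $\Omega^T(C)$) whose linear combination again takes values giving the right base points and the right null direction at $s=0$, and verify that the two sources of ambiguity — rescaling the $v_i$ to normalize $v(0)=u_0$, and the $\mathcal{D}_\Delta$-direction appearing in \eqref{expresion-vprima}-type identities — both disappear after projecting to $\mathcal{L}(\gamma)$. All the conceptual content is already packaged in Lemmas~\ref{lem00200}--\ref{lemma270} and Proposition~\ref{prop00254}; the proof is essentially an exercise in assembling them, with injectivity following from a dimension count plus the characterization of $\ker(dp_{\mathbb{N}^+})$ as the tangent distribution $\mathcal{D}$.
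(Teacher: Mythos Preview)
Your proposal is correct and tracks the paper's argument closely: well-definedness via Lemma~\ref{lemma270} (the paper invokes only this one, which already subsumes the choice-of-$\mathbf{x}$ ambiguity of Lemma~\ref{lem00215}), the identification with Jacobi fields via Proposition~\ref{prop00254}, and the isomorphism by a dimension count. The one organizational difference is that the paper establishes the isomorphism by restricting $\zeta$ step by step along the chain $T_{u_0}TM\supset T_{u_0}\mathbb{N}^+\supset T_{u_0}\mathbb{N}^+(C)$ and then passing to $T_{[u_0]}\mathbb{PN}(C)\simeq T_{\gamma_{u_0}}\mathcal{N}$, identifying the image at each stage as a subspace or quotient transverse to the pieces $\widehat{\mathcal{J}}'_0$, $\widehat{\mathcal{J}}_0$ of $\mathcal{J}_0$; your direct injectivity argument via $\zeta^{-1}(\mathcal{J}_0(\gamma_{u_0}))=\mathcal{D}_{u_0}=\ker(dp_{\mathbb{N}^+})_{u_0}$ is the same content read in reverse and is arguably cleaner.

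One caution on the linearity step: your explicit curve $s\mapsto a_1v_1(s)+a_2v_2(s)$ will not in general lie in $\mathbb{N}^+$ (a linear combination of null vectors need not be null, nor will its base point lie on $C$), so that construction does not work as written. Drop it and rely solely on your composite-of-linear-maps factorization --- with a fixed section $z$ of $\pi^{\mathbb{N}^+}_{\mathbb{PN}}$ over $\mathbb{PN}(C)$, one has $\overline{\zeta}=q\circ\zeta\circ d(z\circ\sigma^{-1})$ with $q$ the quotient map $\mathcal{J}_L(\gamma_{u_0})\to\mathcal{L}(\gamma_{u_0})$ --- which is already correct and is precisely how the paper handles linearity implicitly.
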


\begin{proof}
By lemma \ref{lemma270}, $\overline{\zeta}$ is well--defined.

We have seen in section~\ref{sec:seccion-estruc-dif} that for a globally hyperbolic open set $V\subset M$ such that $C\subset V$ is a smooth local spacelike Cauchy surface, the diagram \ref{cadena-espacios} given by
\[
\mathcal{N} \supset \mathcal{U} \simeq \mathbb{PN}\left(C\right) \simeq \Omega^{X}\left(C\right) \hookrightarrow \mathbb{N}^{+}\left(C\right) \hookrightarrow \mathbb{N}^{+} \hookrightarrow TM
\]
holds.
Proposition \ref{prop00254} shows that $\zeta:T_u TM \rightarrow \mathcal{J}\left(\gamma_u\right)$ is a linear isomorphism for any $u\in TM$.

The idea of the proof is the following. We will restrict $\zeta$ from $T_u TM$ up to $T_{\left[u\right]}\mathbb{PN}\left(C\right)$ step by step, identifying the corresponding subspace of $\mathcal{J}\left(\gamma_u\right)$ image of the map. In the first step we obtain that $T_{u_0}\mathbb{N}^{+} \rightarrow \mathcal{J}_L\left(\gamma_{u_0}\right)$ is a isomorphism. 
In the next step we obtain the isomorphism  $T_{u_0}\mathbb{N}^{+}\left(C\right) \rightarrow S$ where $S\subset \mathcal{J}_L\left(\gamma_{u_0}\right)$ is of the same codimension and transverse (that is, linearly independent) to the vector subspace $\widehat{\mathcal{J}}'_0\left(\gamma_{u_0}\right)$, then $S$ is isomorphic to $\mathcal{J}_L\left(\gamma_{u_0}\right) / \widehat{\mathcal{J}}'_0 \left(\gamma_{u_0}\right)$. 
Next, we consider the isomorphism $T_{\left[u_0\right]}\mathbb{PN}\left(C\right) \rightarrow S / \left(S\cap \widehat{\mathcal{J}}_0\left(\gamma_{u_0}\right)\right)$, but since $\mathcal{U} \simeq \mathbb{PN}\left(C\right)$, then 
\[
T_{\gamma_{u_0}}\mathcal{N} \rightarrow  S / \left(S\cap \widehat{\mathcal{J}}_0\left(\gamma_{u_0}\right)\right)
\]
is an isomorphism.

Recall that we have denoted $\mathcal{J}_0\left(\gamma_{u_0}\right) =\widehat{\mathcal{J}}_0\left(\gamma_{u_0}\right) \oplus \widehat{\mathcal{J}}'_0\left(\gamma_{u_0}\right)$.
Observe that the linear map $q:S\rightarrow \mathcal{J}_L\left(\gamma_{u_0}\right) /\mathcal{J}_0\left(\gamma_{u_0}\right)$ defined by $q\left(J\right)=\left[J\right]$ verifies that
\[
q\left(J\right)=\left[0\right] \Leftrightarrow J\left(t\right)=\left(a+bt\right)\gamma'_{u_0}\left(t\right) \Leftrightarrow J\in S\cap \widehat{\mathcal{J}}_0\left(\gamma_{u_0}\right)
\]
then $S / \left(S\cap \widehat{\mathcal{J}}_0\left(\gamma_{u_0}\right)\right)$ is isomorphic to $\mathcal{L}\left(\gamma_{u_0}\right)=\mathcal{J}_L\left(\gamma_{u_0}\right) /\mathcal{J}_0\left(\gamma_{u_0}\right)$.
This shows that the map $\overline{\zeta}: T_{\gamma_{u_0}}\mathcal{N} \rightarrow \mathcal{L}\left(\gamma_{u_0}\right)=\mathcal{J}_L\left(\gamma_{u_0}\right) /\mathcal{J}_0\left(\gamma_{u_0}\right)$, 
$\xi \mapsto \left[J\right]$, is a linear isomorphism and the proof is complete.
\hfill$\Box$\bigskip

\end{proof}

Proposition \ref{prop00399} allows to see the vectors of the tangent space $T_{\gamma}\mathcal{N}$ as Jacobi fields of variations by light rays.
We will use, from now on, this characterization when working with tangent vectors of $\mathcal{N}$.

By propositions \ref{prop00100} and proposition \ref{prop00399}, it is clear that the characterization of $T_{\gamma}\mathcal{N}$ as $\mathcal{L}\left(\gamma\right)$ does not depends on the representative of the conformal class $\mathcal{C}$.


\section{The canonical contact structure in $\mathcal{N}$}\label{sec:contact}

In this section, the canonical contact structure on $\mathcal{N}$ will be discussed.
Such contact struture is inherited from the kernel of the \emph{canonical 1--form} of $T^*M$ and it will be
described by passing the distribution of hyperplanes to $TM$ before pushing it down to $\mathcal{N}$ in virtue of the inclusions of eq. \ref{cadena-espacios}.
The basic elements of symplectic and contact geometry can be consulted in \cite{Ab87}, \cite{Ar89} and \cite{LM87}. 


\subsection{Elements of symplectic geometry in $T^{*}M$}\label{sec:T*M}


Consider a differentiable manifold $M$.  Its cotangent bundle $\pi \colon T^{*}M\rightarrow M$ carries a canonical 1--form $\theta$ defined pointwise at every $\alpha\in T^{*}M$ by $\theta_{\alpha} = \left(d\pi_{\alpha}\right)^{*}\alpha$.
Consequently we have
\begin{equation}\label{tautform}
\theta_{\alpha}\left(\xi\right) = \left(\left(d\pi_{\alpha}\right)^{*}\alpha\right) \left(\xi\right) = \alpha \left(\left(d\pi_{\alpha}\right)\xi\right)
\end{equation}
for $\xi\in T_{\alpha}\left(T^{*}M\right)$.
In local canonical bundle coordinates $(x^k,p_k)$, we can write
\begin{equation}\label{tautform-coord}
\theta = \sum_{k=1}^{m} p_k dx^k \, .
\end{equation}
The 2--form $\omega =-d\theta$,
defines a symplectic 2--form in $T^{*}M$, that in the previous local coordinates takes the form $\omega = \sum_{k=1}^{m}dx^k \wedge dp_{k}$.

Now, we want to construct $\mathcal{N}$ again, but this time starting from the contangent bundle $T^{*}M$.
Consider again the natural identification provided by the metric $\mathbf{g}$: $\widehat{\mathbf{g}}: TM \rightarrow T^{*}M$,  $\xi \mapsto \mathbf{g}\left(\xi,\cdot\right)$,
and denote by $\mathbb{N}^{+*}$ the image of the restriction of $\widehat{\mathbf{g}}$ to $\mathbb{N}^{+}$, that is
\[
\mathbb{N}^{+*}=\widehat{\mathbf{g}}\left(\mathbb{N}^{+}\right)=\left\{ \alpha=\widehat{\mathbf{g}}\left(\xi\right)\in T^{*}M: \xi\in \mathbb{N}^{+}\right\}
\]

In an analogous manner as in Sect. \ref{sec:seccion-estruc-dif}, define the \emph{Euler field} $\mathcal{E}\in \mathfrak{X}\left(T^{*}M\right)$\index{field!Euler!in $T^{*}M$}\index{Euler!field in $T^{*}M$} by
\begin{equation*}
\mathcal{E}\left( \alpha\right) =dc\left(\frac{\partial }{\partial t}\right) \left( 0\right) \, ,
\end{equation*}%
where $\alpha\in T_{p}^{*}M$ and $c:\mathbb{R}\rightarrow T_{p}^{*}M$ verifies that $c\left( t\right) =e^t\alpha$.
The curve $c$ is an integral curve of $\mathcal{E}$ because
\begin{equation*}
c^{\prime }\left( t\right) =dc\left( \frac{\partial }{\partial t}\right)\left( t\right) =\mathcal{E}\left( c\left( t\right) \right) \, .
\end{equation*}%
In the previous coordinates, $\mathcal{E}$ can be written as $\mathcal{E}=p_k \partial /\partial p_k$.
So, for every $\alpha\in \mathbb{N}^{+*}$ the integral curve $c\left(t\right)=e^t\alpha$ is contained in $\mathbb{N}^{+*}$, therefore $\mathcal{E}$ is tangent to $\mathbb{N}^{+*}$.

Moreover, if $\omega $ is the symplectic 2--form of $T^{*}M$ it is trivial to see 
\begin{equation}\label{omega-theta-2}
\mathcal{L}_{\mathcal{E}}\omega = i_{\mathcal{E}}d\omega + d\left(i_{\mathcal{E}}\omega\right) = d\left(-\theta\right)=-d\theta = \omega \, ,
\end{equation}
therefore $\mathcal{E}$ is a Liouville vector field.
In fact, $\mathcal{E}$ sometimes is called the \emph{Liouville} or \emph{Euler--Liouville vector field}\index{Euler--Liouville!vector field}\index{vector field!Euler--Liouville}.

Consider now the Hamiltonian function (again just the kinetic energy) defined by
$H: T^{*}M \rightarrow \mathbb{R}$, $\alpha \mapsto \frac{1}{2}\mathbf{g}\left(\widehat{\mathbf{g}}^{-1}\left(\alpha\right),\widehat{\mathbf{g}}^{-1}\left(\alpha\right)\right)$, defining the Hamiltonian vector field:
\[
X_{H} = g^{ki}p_i \frac{\partial}{\partial x^{k}} - \frac{1}{2}\frac{\partial g^{ij}}{\partial x^{k}}p_i p_j \frac{\partial}{\partial p_{i}}
\]

\begin{lemma}\label{lemma-spray-hamilton}
Let $X_{\mathbf{g}}, \Delta \in \mathfrak{X}\left(TM\right)$ be the the geodesic spray and Euler field of $TM$ and $X_{H},\mathcal{E}\in \mathfrak{X}\left(T^{*}M\right)$ the Hamiltonian vector field and Euler field of $T^{*}M$ respectively.   Then we have that $\widehat{\mathbf{g}}_{*}\left(\Delta\right)=\mathcal{E}$ and $\widehat{\mathbf{g}}_{*}\left(X_{\mathbf{g}}\right)=X_H$.
\end{lemma}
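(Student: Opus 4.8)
The strategy is to verify the two identities in local canonical coordinates, exploiting that the diffeomorphism $\widehat{\mathbf{g}}\colon TM\to T^{*}M$ is fibrewise linear with matrix $(g_{ij})$, so its pushforward is easy to compute explicitly. First I would fix coordinates $(x^k,\dot{x}^k)$ on $TM$ as in \eqref{equation3.1} and $(x^k,p_k)$ on $T^{*}M$ as in \eqref{tautform-coord}, so that $\widehat{\mathbf{g}}$ reads $(x^k,\dot{x}^k)\mapsto(x^k,p_k)$ with $p_k=g_{ki}\dot{x}^i$. The identity $\widehat{\mathbf{g}}_{*}(\Delta)=\mathcal{E}$ is then the cleanest: the Euler field on $TM$ is $\Delta=\dot{x}^k\,\partial/\partial\dot{x}^k$, and since the flow of $\Delta$ is $(x,\dot{x})\mapsto(x,e^t\dot{x})$, its image under $\widehat{\mathbf{g}}$ is the flow $(x,p)\mapsto(x,e^t p)$ (by linearity of $\widehat{\mathbf{g}}$ in the fibre), which is precisely the flow of $\mathcal{E}=p_k\,\partial/\partial p_k$ defined above. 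One can either phrase it this way at the level of flows or differentiate the chain rule $p_k\circ\widehat{\mathbf{g}}=g_{ki}\dot{x}^i$ directly.

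For the second identity, $\widehat{\mathbf{g}}_{*}(X_{\mathbf{g}})=X_H$, the conceptual content is that a null geodesic of $(M,\mathbf{g})$ and the Hamiltonian flow of the kinetic-energy Hamiltonian $H(\alpha)=\tfrac12\mathbf{g}(\widehat{\mathbf{g}}^{-1}\alpha,\widehat{\mathbf{g}}^{-1}\alpha)$ project to the same unparametrized curves in $M$ — in fact to the same parametrized curves once one matches the momenta via $\widehat{\mathbf{g}}$. The honest route is: recall that the integral curves of $X_{\mathbf{g}}$ are exactly $t\mapsto(\gamma(t),\gamma'(t))$ for geodesics $\gamma$ (this is the defining property of the spray, used in the excerpt), so I must show $\widehat{\mathbf{g}}$ sends these to integral curves of $X_H$. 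Writing $H$ in coordinates as $H=\tfrac12 g^{ij}p_ip_j$, Hamilton's equations give $\dot{x}^k=g^{ki}p_i$ and $\dot{p}_k=-\tfrac12(\partial g^{ij}/\partial x^k)p_ip_j$; substituting $p_i=g_{ij}\dot{x}^j$ into the first recovers $p_i=g_{ij}\dot{x}^j$ tautologically, and substituting into the second and differentiating $p_k=g_{ki}\dot{x}^i$ along the curve yields, after using $\partial_k g^{ij}=-g^{ia}g^{jb}\partial_k g_{ab}$ and the formula for the Christoffel symbols, precisely the geodesic equation $\ddot{x}^k+\Gamma^k_{ij}\dot{x}^i\dot{x}^j=0$. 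Hence the two vector fields are $\widehat{\mathbf{g}}$-related. (Note there is a minor typo in the stated coordinate form of $X_H$ — the last summation index should be $\partial/\partial p_k$, not $\partial/\partial p_i$ — which the proof should silently correct.)

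I expect the main obstacle to be purely computational rather than conceptual: carefully tracking the index gymnastics relating $\partial_k g^{ij}$ to $\partial_k g_{ab}$ and assembling the Christoffel symbols so that the Legendre-transformed Hamilton equations visibly coincide with the geodesic equations. A cleaner alternative, which I would actually prefer to present, avoids coordinates in the geodesic equation: observe that $\widehat{\mathbf{g}}$ is (up to the sign/factor conventions) the Legendre transform associated with the kinetic Lagrangian $L(v)=\tfrac12\mathbf{g}(v,v)$, that $H=L\circ\widehat{\mathbf{g}}^{-1}$ is its Legendre dual, and invoke the standard fact that the Legendre transform intertwines the Euler--Lagrange (equivalently, geodesic spray) flow on $TM$ with the Hamiltonian flow of $H$ on $T^{*}M$; since $X_{\mathbf{g}}$ is by definition the geodesic spray and the Euler--Lagrange equations of $L$ are the geodesic equations, this gives $\widehat{\mathbf{g}}_{*}(X_{\mathbf{g}})=X_H$ immediately. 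Either way one also records the (already essentially shown) fact $\widehat{\mathbf{g}}^{*}\theta = $ the canonical $1$-form of $TM$ built from $\mathbf{g}$, which makes the Euler-field statement and the Hamiltonian statement two facets of $\widehat{\mathbf{g}}$ being a symplectomorphism intertwining the natural structures. I would then close with a one-line remark that both identities restrict to $\mathbb{N}^{+}$ and $\mathbb{N}^{+*}$ since $\widehat{\mathbf{g}}(\mathbb{N}^{+})=\mathbb{N}^{+*}$ by construction, which is what is needed for the subsequent reconstruction of $\mathcal{N}$ from $T^{*}M$. \hfill$\Box$
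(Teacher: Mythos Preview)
Your proof is correct. For $\widehat{\mathbf{g}}_{*}(\Delta)=\mathcal{E}$ you argue exactly as the paper does: the fibrewise linearity of $\widehat{\mathbf{g}}$ carries the integral curve $t\mapsto e^t\xi$ to $t\mapsto e^t\widehat{\mathbf{g}}(\xi)$, so the flows (hence the fields) correspond.

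For $\widehat{\mathbf{g}}_{*}(X_{\mathbf{g}})=X_H$ your route differs. You propose either a coordinate verification that Hamilton's equations for $H=\tfrac12 g^{ij}p_ip_j$ pull back to the geodesic equation, or the general Legendre-transform principle that the Euler--Lagrange flow of $L=\tfrac12\mathbf{g}(v,v)$ matches the Hamiltonian flow of its dual. Both are valid. The paper instead dispatches this in one line by pulling back the defining relation $i_{X_H}\omega=dH$ along $\widehat{\mathbf{g}}$: since $\widehat{\mathbf{g}}^{*}\omega=\omega_{\mathbf{g}}$ and $H\circ\widehat{\mathbf{g}}=L$, one gets $i_{\widehat{\mathbf{g}}^{-1}_{*}X_H}\omega_{\mathbf{g}}=dL$, and as $X_{\mathbf{g}}$ is characterised by this same equation (it is the symplectic gradient of $L$ for $\omega_{\mathbf{g}}$), the identity follows. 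Your closing remark about $\widehat{\mathbf{g}}$ being a symplectomorphism is in fact the whole content of the paper's argument; promoting it from an afterthought to the main step would give you the shorter proof and spare you the Christoffel-symbol bookkeeping.
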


\begin{proof}
If we take any $\xi\in T^{*}M$ and $\alpha=\widehat{\mathbf{g}}\left(\xi\right)$, then the integral curve  $c\left(t\right)=e^t\xi$ of Euler field $\Delta$ in $TM$ is transformed by $\widehat{\mathbf{g}}$ as
\[
\widehat{\mathbf{g}}\left(c\left(t\right)\right)=\mathbf{g}\left(c\left(t\right),\cdot\right)=
\mathbf{g}\left(e^t\xi,\cdot\right)=e^t\mathbf{g}\left(\xi,\cdot\right)=e^t\widehat{\mathbf{g}}\left(\xi\right)=e^t\alpha\in T^{*}M
\]
being an integral curve of Euler field $\mathcal{E}$ in $T^{*}M$.
Then, for any $\xi\in T^{*}M$ we have that
\[
\widehat{\mathbf{g}}_{*}\left(\Delta\left(\xi\right)\right)=\mathcal{E}\left(\widehat{\mathbf{g}}\left(\xi\right)\right)
\]
is verified, therefore this implies $\widehat{\mathbf{g}}_{*}\left(\Delta\right)=\mathcal{E}$.

The second relation is obtained easily by taking the pull-back of the identity $i_{X_H}\omega = dH$ along the map $\hat{\mathbf{g}}$.
\hfill$\Box$\bigskip

\end{proof}

The following corollary is an immediate consequence of Lemma \ref{lemma-spray-hamilton} and the construction of $\mathcal{N}$ done in section~\ref{sec:seccion-estruc-dif}.

\begin{corollary}\label{corolario-N-T*M}
The space of light rays $\mathcal{N}$ of $M$ can be built by the quotient
\[
\mathcal{N} = \mathbb{N}^{+*} / \mathcal{D}^{*}
\]
where $\mathcal{D}^{*}$ is the distribution generated by the vector fields $\mathcal{E}$ and $X_H$, that is $\mathcal{D}^{*}=\mathrm{span}\left\{ \mathcal{E},X_H \right\}$.
\end{corollary}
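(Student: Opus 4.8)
The plan is to transport the quotient construction of $\mathcal{N}$ from $TM$ (Proposition \ref{prop00150}) to $T^*M$ via the metric isomorphism $\widehat{\mathbf{g}}$, using Lemma \ref{lemma-spray-hamilton} to identify the relevant distributions. First I would note that $\widehat{\mathbf{g}}: TM \rightarrow T^*M$ is a diffeomorphism, and that it restricts to a diffeomorphism $\widehat{\mathbf{g}}: \mathbb{N}^{+} \rightarrow \mathbb{N}^{+*}$ by the very definition of $\mathbb{N}^{+*}$ as the image $\widehat{\mathbf{g}}(\mathbb{N}^{+})$; injectivity and smoothness of the restriction are inherited from $\widehat{\mathbf{g}}$.

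Next, since $\widehat{\mathbf{g}}_{*}(\Delta) = \mathcal{E}$ and $\widehat{\mathbf{g}}_{*}(X_{\mathbf{g}}) = X_H$ by Lemma \ref{lemma-spray-hamilton}, the diffeomorphism $\widehat{\mathbf{g}}$ carries the distribution $\mathcal{D} = \mathrm{span}\{X_{\mathbf{g}}, \Delta\}$ on $\mathbb{N}^{+}$ to the distribution $\mathcal{D}^{*} = \mathrm{span}\{\mathcal{E}, X_H\}$ on $\mathbb{N}^{+*}$. A diffeomorphism maps leaves of an integrable distribution bijectively onto leaves of the pushed-forward distribution, so $\mathcal{D}^{*}$ is regular (because $\mathcal{D}$ is, by Proposition \ref{prop00150}, under the standing hypothesis that $M$ is strongly causal), and $\widehat{\mathbf{g}}$ descends to a bijection between the leaf spaces:
\[
\mathbb{N}^{+}/\mathcal{D} \;\xrightarrow{\;\sim\;}\; \mathbb{N}^{+*}/\mathcal{D}^{*}.
\]
Since the left-hand side is $\mathcal{N}$ by Proposition \ref{prop00150}, this gives $\mathcal{N} = \mathbb{N}^{+*}/\mathcal{D}^{*}$ as a set, and in fact as a smooth manifold, the quotient topology and differentiable structure being transported along $\widehat{\mathbf{g}}$.

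There is essentially no hard obstacle here: the statement is a transport-of-structure corollary, and the only thing to be careful about is the bookkeeping that $\widehat{\mathbf{g}}$ genuinely intertwines the two quotient constructions, i.e. that $p_{\mathbb{N}^{+*}} \circ \widehat{\mathbf{g}} = p_{\mathbb{N}^{+}}$ where $p_{\mathbb{N}^{+*}}: \mathbb{N}^{+*} \rightarrow \mathcal{N}$ sends $\alpha = \widehat{\mathbf{g}}(\xi)$ to the light ray $[\gamma_\xi]$. This follows immediately once one observes that the $\widehat{\mathbf{g}}$-image of an integral curve of $X_{\mathbf{g}}$ through $\xi$ — which projects to the null geodesic $\gamma_\xi$ in $M$ — is an integral curve of $X_H$ through $\alpha$ with the same base projection, because $\widehat{\mathbf{g}}$ covers the identity on $M$. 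Hence a $\mathcal{D}$-leaf and its $\widehat{\mathbf{g}}$-image $\mathcal{D}^{*}$-leaf correspond to the same unparametrized null geodesic, i.e. the same point of $\mathcal{N}$. I would close by remarking that $X_H$, being the Hamiltonian vector field of the kinetic-energy function $H$ for the conformal representative $\mathbf{g}$, has integral curves projecting to the affinely parametrized $\mathbf{g}$-geodesics, so $\mathcal{D}^{*}$ has the expected geometric meaning, and the identification is independent of the chosen metric in $\mathcal{C}$ in the same way $\mathcal{N}$ itself is (Proposition \ref{prop00100}).
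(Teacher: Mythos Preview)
Your proposal is correct and follows exactly the approach the paper indicates: the paper states this corollary as an immediate consequence of Lemma~\ref{lemma-spray-hamilton} together with the construction of $\mathcal{N}$ in Section~\ref{sec:seccion-estruc-dif}, and you have simply spelled out those details---using $\widehat{\mathbf{g}}$ to push $\mathcal{D}$ to $\mathcal{D}^{*}$ and invoking Proposition~\ref{prop00150}. There is nothing to add.
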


Lemma \ref{lemma-spray-hamilton} also shows that the null geodesic defined by $\alpha\in\mathbb{N}^{*}$ coincides to the null geodesic defined by $v\in\mathbb{N}$ if and only if $\widehat{\mathbf{g}}\left(v\right)=\alpha$, because the first equation has to be verified.
Then we have the following commutative diagram:
\begin{equation}\label{diagrama006}
\begin{tikzpicture}[every node/.style={midway}]
\matrix[column sep={6em,between origins},
        row sep={2em}] at (0,0)
{ \node(N1)   {$\mathbb{N}^{*}$}  ; & \node(N) {$\mathcal{N}$}; \\
  \node(N2) {$\mathbb{N}$};                   \\};
\draw[->] (N1) -- (N) node[anchor=south]  {$p_{\mathbb{N}^{*}} $};
\draw[->] (N2) -- (N) node[anchor=north]  {\hspace{5mm}  $p_{\mathbb{N}}$};
\draw[<-] (N1)   -- (N2) node[anchor=east] {$\widehat{\mathbf{g}}$};
\end{tikzpicture}
\end{equation}

Next, we will introduce some basic definitions and results in contact geometry that we will need later.
See \cite[Appx. 4]{Ar89} and \cite[Ch. 5]{LM87} for more details.

\begin{definition}
Given a $n$--dimensional differentiable manifold $P$, a \emph{contact element}\index{contact!element}\index{element!contact} in $P$ is a $\left(n-1\right)$--dimensional subspace $\mathcal{H}_q\subset T_qP$.
The point $q\in P$ is called the \emph{contact point}\index{contact!point}\index{point!contact} of $\mathcal{H}_q$.

We will say that a \emph{distribution of hyperplanes}\index{distribution!hyperplanes}\index{hyperplanes!distribution} $\mathcal{H}$ in a differentiable manifold $M$ is a map $\mathcal{H}$ defined in $M$ such that for every $q\in M$ we have that $\mathcal{H}\left(q\right)=\mathcal{H}_q$ is a contact element at $q$.
\end{definition}

\begin{lemma}\label{hyperplanes-forms}
Every differentiable distribution of hyperplanes $\mathcal{H}$ can be written locally as the kernel of 1--form.
\end{lemma}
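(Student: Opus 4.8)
The plan is to produce, around an arbitrary point $q_0\in P$, a local frame of $TP$ adapted to $\mathcal{H}$ and read off the desired $1$--form from the dual coframe. First I would use the differentiability of the distribution: by definition this means that $q_0$ has a neighbourhood on which $\mathcal{H}$ is spanned by differentiable vector fields, so I can choose $X_1,\dots,X_{n-1}\in\mathfrak{X}\left(U_0\right)$ on some open $U_0\ni q_0$ with $\mathcal{H}_q=\mathrm{span}\left\{X_1\left(q\right),\dots,X_{n-1}\left(q\right)\right\}$ for every $q\in U_0$ and, after shrinking $U_0$ if necessary, with these vectors linearly independent at every point of $U_0$.

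Next I would complete this to a frame of $TP$. Pick any $v\in T_{q_0}P\setminus \mathcal{H}_{q_0}$ and extend it to a differentiable vector field $X_n$ on a neighbourhood of $q_0$; since $\{X_1\left(q_0\right),\dots,X_{n-1}\left(q_0\right),X_n\left(q_0\right)\}$ is a basis of $T_{q_0}P$, the determinant of the matrix of components of $X_1,\dots,X_n$ in any coordinate chart is nonzero at $q_0$, hence nonzero on a possibly smaller open set $U\ni q_0$. Thus $\{X_1,\dots,X_n\}$ is a differentiable local frame of $TP$ on $U$. Then I would take the dual coframe $\{\alpha^1,\dots,\alpha^n\}$, characterised by $\alpha^i\left(X_j\right)=\delta^i_j$, which depends differentiably on the base point because it is obtained by inverting the differentiable invertible matrix of frame components, and I set $\alpha=\alpha^n$.

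It remains to check $\mathcal{H}=\ker\alpha$ on $U$: for $q\in U$ and $w\in T_qP$, writing $w=\sum_i c^i X_i\left(q\right)$ we have $\alpha_q\left(w\right)=c^n$, so $\alpha_q\left(w\right)=0$ if and only if $w\in\mathrm{span}\left\{X_1\left(q\right),\dots,X_{n-1}\left(q\right)\right\}=\mathcal{H}_q$. Since $q_0$ was arbitrary this proves the lemma.

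There is essentially no hard step here; the only point deserving care is the justification that ``differentiable distribution of hyperplanes'' genuinely supplies a local differentiable spanning family $X_1,\dots,X_{n-1}$ (this is precisely the meaning of differentiability of a distribution), together with the remark that transversality is an open condition, so that the completion to a full frame persists on a neighbourhood. An equivalent alternative route would be to fix an auxiliary Riemannian metric on $U$, let $N$ be a local differentiable unit normal field to $\mathcal{H}$ (well defined up to sign, and a consistent choice always exists locally), and take $\alpha=\mathbf{g}\left(N,\cdot\right)$; the frame argument is preferable because it does not even need to mention local co-orientability.
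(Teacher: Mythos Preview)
Your argument is correct and complete: building a local frame adapted to $\mathcal{H}$ and taking the last element of the dual coframe is exactly the standard proof, and you have handled the only delicate points (local spanning by smooth fields as the meaning of differentiability, and openness of the linear-independence condition). The paper does not actually give its own proof of this lemma but simply refers to \cite[Lem.~1.1.1]{G08}, so there is no substantive comparison to make; your write-up supplies precisely the kind of elementary argument that reference contains.
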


\begin{proof}
See \cite[Lem. 1.1.1]{G08} for proof.
\hfill$\Box$\bigskip

\end{proof}

It is clear that if a differentiable distribution of hyperplanes $\mathcal{H}$ is defined locally by the 1--form $\alpha\in\mathfrak{X}^{*}\left(P\right)$ then, for every non-vanishing function $f\in \mathfrak{F}\left(P\right)$ the 1--form $f\alpha$ also defines $\mathcal{H}$ since $\alpha$ and $f\alpha$ have the same kernel.

Given a distribution of hyperplanes $\mathcal{H}$ we will say that it is maximally non-integrable if for any locally defined 1-form $\eta$ such that $\mathcal{H} = \ker \eta$, then $d\eta$ is non-degenerate when restricted to $\mathcal{H}$.

\begin{definition}
A \emph{contact structure}\index{contact!structure}\index{structure!contact} $\mathcal{H}$ in a $\left(2n+1\right)$--dimensional differentiable manifold $P$ is a maximally non--integrable distribution of hyperplanes. The hyperplanes $\mathcal{H}_x \subset T_xP$ are called contact elements.
If there exists a globally defined 1-form $\eta$ defining $\mathcal{H}$, i.e., $\mathcal{H} = \ker \eta$, we will say that $\mathcal{H}$ is a \emph{cooriented contact structure}\index{cooriented!contact structure}\index{contact!structure!cooriented} and we will say that $\eta$ is a contact form.
\end{definition}

An equivalent way to determine if a distribution of hyperplanes $\mathcal{H}$ determines a contact structure is provided by the following result (see also \cite{Ar89} and \cite{Ca01}).

\begin{lemma}\label{lemma-non-degen}
Let $\mathcal{H}$ be a distribution of hyperplanes in $P$ locally defined as $\mathcal{H}=\mathrm{ker}\left(\eta\right)$, then $\left.d\eta\right|_{\mathcal{H}}$ is non--degenerated if and only if $\eta\wedge \left(d\eta\right)^n \neq 0$.
\end{lemma}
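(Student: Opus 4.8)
The plan is to work pointwise on $P$ and reduce the statement to a standard fact of linear algebra about the rank of an alternating bilinear form on an odd-dimensional vector space. Fix $x\in P$ and set $n+1 = \dim P$ restricted so that $\dim P = 2n+1$; write $V = T_xP$, $W = \mathcal{H}_x = \ker\eta_x$, which is a $2n$-dimensional hyperplane, and let $\beta = (d\eta)_x$, an alternating $2$-form on $V$. Since $\eta$ and $f\eta$ (for $f$ nonvanishing) define the same distribution and $d(f\eta) = f\,d\eta + df\wedge\eta$, the restriction $d(f\eta)|_{\mathcal{H}}$ equals $f\cdot d\eta|_{\mathcal{H}}$ (the term $df\wedge\eta$ vanishes on $\mathcal{H}=\ker\eta$), and likewise $(f\eta)\wedge(d(f\eta))^n = f^{n+1}\,\eta\wedge(d\eta)^n$; hence both conditions in the statement are independent of the local defining form, so it suffices to prove the equivalence for $\eta$ itself.

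The key step is then the following linear-algebra claim: for $\eta_x \in V^*$ nonzero with kernel $W$, one has $\eta_x\wedge\beta^n \neq 0$ in $\Lambda^{2n+1}V^*$ if and only if $\beta|_W$ is nondegenerate. First I would choose a basis $e_0, e_1,\dots,e_{2n}$ of $V$ with $W = \mathrm{span}\{e_1,\dots,e_{2n}\}$ and $\eta_x$ the dual covector $e^0$; then $\eta_x\wedge\beta^n$ evaluated on $e_0\wedge\cdots\wedge e_{2n}$ equals (up to the nonzero combinatorial constant $n!$) the Pfaffian-type quantity $\beta^n(e_1\wedge\cdots\wedge e_{2n})$, i.e. $n!$ times the Pfaffian of the $2n\times 2n$ matrix of $\beta|_W$ in the basis $e_1,\dots,e_{2n}$. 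For the ``only if'' direction: if $\beta|_W$ is degenerate there is $0\neq v\in W$ with $\beta(v,\cdot)|_W = 0$; completing $v$ to a basis of $W$ shows every term of $\beta^n|_W$ contains a factor $\beta(v,\cdot)$ restricted to $W$, hence $\beta^n|_W = 0$ and a fortiori $\eta_x\wedge\beta^n = 0$. Conversely, if $\beta|_W$ is nondegenerate, the standard symplectic normal form gives a basis in which $\beta|_W = \sum_{j=1}^{n} e^{2j-1}\wedge e^{2j}$, whence $(\beta|_W)^n = n!\, e^1\wedge\cdots\wedge e^{2n} \neq 0$; since $\beta - \beta|_W$ only involves terms with a factor dual-paired to $e^0$ when pulled back, $\eta_x\wedge\beta^n = \eta_x\wedge(\beta|_W)^n \neq 0$ as a top form on $V$. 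Since $x$ was arbitrary and the top form $\eta\wedge(d\eta)^n$ is nonvanishing at $x$ iff its value on any basis is nonzero, the global equivalence follows.

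The main obstacle — really the only point requiring care — is the bookkeeping in showing $\eta_x\wedge\beta^n = \eta_x\wedge(\beta|_W)^n$, i.e. that the ``extra'' components of $\beta$ (those pairing $e_0$ with $W$) contribute nothing after wedging with $\eta_x = e^0$. This is because any monomial of $\beta^n$ containing such a component carries a second factor $e^0$ (from $\beta$ being alternating, or from a second mixed term), or because after wedging with $e^0$ on the left any term of $\beta^n$ with an $e^0$ factor dies; writing $\beta = \beta|_W + e^0\wedge\xi$ for some $\xi\in W^*$ and expanding $\beta^n = (\beta|_W)^n + n\,(e^0\wedge\xi)\wedge(\beta|_W)^{n-1}$ (higher powers of $e^0\wedge\xi$ vanish), then wedging with $e^0$ kills the second summand. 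I would present this expansion explicitly and then invoke the symplectic normal form; everything else is routine.
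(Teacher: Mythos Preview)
Your proposal is correct. The paper does not actually give a proof of this lemma: it simply cites \cite[Prop.~10.3]{Ca01} and writes ``See \cite[Prop.~10.3]{Ca01} for proof.'' So there is no in-paper argument to compare against.

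Your pointwise linear-algebra reduction is the standard route (and essentially what one finds in the cited reference): write $\beta = (d\eta)_x = \beta_W + e^0\wedge\xi$ with $\eta_x = e^0$ and $W=\ker\eta_x$, observe that higher powers of $e^0\wedge\xi$ vanish so $\beta^n=\beta_W^{\,n}+n\,(e^0\wedge\xi)\wedge\beta_W^{\,n-1}$, and hence $e^0\wedge\beta^n = e^0\wedge\beta_W^{\,n}$; then use that $\beta_W^{\,n}\neq 0$ on the $2n$-dimensional space $W$ is equivalent to nondegeneracy of $\beta_W$. The remark that both conditions are invariant under replacing $\eta$ by $f\eta$ is a nice touch, though strictly unnecessary once you fix $\eta$. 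The only cosmetic issue is the phrasing ``set $n+1=\dim P$ restricted so that $\dim P=2n+1$'', which should simply read ``let $\dim P=2n+1$''.
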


\begin{proof}
See \cite[Prop. 10.3]{Ca01} for proof.
\hfill$\Box$
\end{proof}

\begin{lemma}\label{lemma-alfa=falpa}
If $\alpha$ is a contact form in $P$, then $f\alpha$ is also a contact form for every non--vanishing differentiable function $f\in \mathfrak{F}\left(P\right)$.
\end{lemma}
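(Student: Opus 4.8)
The plan is to reduce everything to the criterion of Lemma \ref{lemma-non-degen}: a locally defined 1-form $\eta$ determines a contact structure precisely when $\eta\wedge(d\eta)^n\neq 0$. Since $\alpha$ is a contact form, we know $\alpha\wedge(d\alpha)^n\neq 0$ at every point of $P$, and since $f$ is nowhere vanishing, I must show that $(f\alpha)\wedge(d(f\alpha))^n$ is also nowhere vanishing and hence $f\alpha$ is a contact form. Note also that $f\alpha$ is globally defined whenever $\alpha$ is, so the coorientability is preserved, but the essential point is the non-degeneracy condition.

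First I would compute $d(f\alpha)=df\wedge\alpha+f\,d\alpha$. Then I would expand the top power using the fact that $df\wedge\alpha$ and $f\,d\alpha$ are 2-forms, so they commute in the exterior algebra and the binomial theorem applies:
\[
\bigl(d(f\alpha)\bigr)^n=\sum_{k=0}^{n}\binom{n}{k} f^{\,n-k}(d\alpha)^{n-k}\wedge (df\wedge\alpha)^k .
\]
The key observation is that $(df\wedge\alpha)^k=0$ for $k\geq 2$, because it contains the factor $\alpha\wedge\alpha=0$. Hence only the terms $k=0$ and $k=1$ survive:
\[
\bigl(d(f\alpha)\bigr)^n=f^{\,n}(d\alpha)^n+n f^{\,n-1}(d\alpha)^{n-1}\wedge df\wedge\alpha .
\]
Wedging with $f\alpha$ on the left kills the second term, since it already contains $\alpha$, and leaves
\[
(f\alpha)\wedge\bigl(d(f\alpha)\bigr)^n=f^{\,n+1}\,\alpha\wedge(d\alpha)^n .
\]

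Now I would conclude: since $\alpha$ is a contact form, $\alpha\wedge(d\alpha)^n$ is a nowhere-vanishing top-degree form on $P$, and since $f$ is nowhere vanishing, $f^{\,n+1}$ is nowhere vanishing; therefore $(f\alpha)\wedge(d(f\alpha))^n$ is nowhere vanishing, and by Lemma \ref{lemma-non-degen} the distribution $\ker(f\alpha)=\ker\alpha$ is maximally non-integrable, i.e., $f\alpha$ is a contact form. I do not expect any real obstacle here; the only point requiring a little care is the bookkeeping with signs and the commutativity of even-degree forms in the binomial expansion, but none of the surviving terms carry a sign ambiguity once the $\alpha\wedge\alpha=0$ terms are discarded.
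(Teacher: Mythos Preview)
Your argument is correct: the identity $(f\alpha)\wedge\bigl(d(f\alpha)\bigr)^n = f^{\,n+1}\,\alpha\wedge(d\alpha)^n$ is derived cleanly, and the appeal to Lemma~\ref{lemma-non-degen} finishes the proof. The paper itself does not give an argument here but simply refers the reader to \cite[Sect.~V.4.1]{LM87}; your computation is precisely the standard one that reference contains, so there is no divergence in approach.
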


\begin{proof}
See \cite[Sect. V.4.1]{LM87}.
\hfill$\Box$
\end{proof}

\subsection{Constructing the contact structure of $\mathcal{N}$} \label{sec:contact-structure-1}

Consider the tautological 1--form $\theta\in \mathfrak{X}^{*}\left(T^{*}M\right)$.
The diffeomorphism $\widehat{\mathbf{g}}:TM\rightarrow T^{*}M$ allows to carry away $\theta$ to $TM$ by pull--back.
Let $\pi_{M}^{TM}:TM\rightarrow M$ and $\pi_{M}^{T^{*}M}:T^{*}M\rightarrow M$ be the canonical projections, since $\pi_{M}^{TM}=\pi_{M}^{T^{*}M}\circ \widehat{\mathbf{g}}$, then it is verified
\[
\left(d\pi_{M}^{TM}\right)_{v}\left(\xi\right) = \left(d\pi_{M}^{T^{*}M}\right)_{\widehat{\mathbf{g}}\left(v\right)}\left(\widehat{\mathbf{g}}_{*}\left(\xi\right)\right)
\]
for all $\xi\in T_v TM$.
If we define
\begin{equation}\label{thetag}
\theta_{\mathbf{g}} = \widehat{\mathbf{g}}^{*}\theta
\end{equation}
then, using the expression \ref{tautform}, if $\xi\in T_v TM$ we have
\begin{equation*}
\left(\theta_{\mathbf{g}}\right)_v \left(\xi\right) = \widehat{\mathbf{g}}\left(v\right)\left(\left(d\pi_{M}^{T^{*}M}\right)_{\widehat{\mathbf{g}}\left(v\right)}\left(\widehat{\mathbf{g}}_{*}\left(\xi\right)\right)\right) = \mathbf{g}\left(v,\left(d\pi_{M}^{TM}\right)_{v}\left(\xi\right)\right) \, .
\end{equation*}
For a given globally hyperbolic open set $V\subset M$ equipped with coordinates $\left( x^1,\dots,x^m \right)$ such that $v\in TV$ is written as $v=v^i \frac{\partial}{\partial x^i}$, then $\left( x^i,v^i \right)$ are coordinates in $TV$.
By expression \ref{tautform-coord}, we can write
\[
\theta_{\mathbf{g}} = g_{ij}v^i dx^j \, .
\]

Let us denote by $\mathcal{H}^{TV}=\mathrm{ker}\left(\theta_{\mathbf{g}}\right)$, that is a distribution of hyperplanes in $TV\subset TM$.
This implies that $\mathrm{dim}\left(\mathcal{H}^{TV}_v\right)=2m-1$ for every $v\in TV$.

As we seen in Sect. \ref{sec:seccion-estruc-dif}, we have the chain of inclusions \ref{cadena-espacios}:
\begin{equation}
\Omega \hookrightarrow \mathbb{N}^{+}\left(C\right) \hookrightarrow \mathbb{N}^{+}\left(V\right) \hookrightarrow TV
\end{equation}
where $\Omega = \Omega^T(C) = \{ v \in \mathbb{N}^+ \mid g(v,T) = -1 \}$ for a non-vanishing timelike vector field $T$.  Observer taht if $v \in \Omega$ is the representative of the class of equivalence $[v] \in \mathbb{PN}(C)$, then clearly the following maps
\begin{equation}\label{OmegaN}
\begin{array}{ccccc}   
\Omega & \longrightarrow & \mathbb{PN}(C) & \longrightarrow & \mathcal{U} \subset \mathcal{N} \\
v & \mapsto & [v] & \mapsto & \gamma_v 
\end{array}
\end{equation}
are diffeomorphisms.

Then, we will see that the pullback of $\theta_{\mathbf{g}}$ by the inclusion $\Omega \hookrightarrow TV$ defines a 1--form $\left.\theta_{\mathbf{g}}\right|_{\Omega^{X}\left(C\right)}$, and therefore a distribution of hyperplanes, in $\Omega$.
This 1--form and its kernel can be extended from $\mathcal{U} \subset \mathcal{N}$ obtaining the 1--form $\theta_0$ looked for.

To obtain a suitable formula of  $\theta_0$ we will proceed projecting the distribution of hyperplanes in $TM$ up to $\Omega^{X}\left(C\right)$ step by step.

First, observe that the restriction of $\mathcal{H}^{TV}$ to $T\mathbb{N}^{+}\left(V\right)$, denoted by $\mathcal{H}^{\mathbb{N}^{+}\left(V\right)}$, is again a distribution of hyperplanes.
Indeed, if $c:\left(-\epsilon,\epsilon\right)\rightarrow \mathbb{N}^{+}\left(V\right)$ is a differentiable curve such that
\[
\left\{
\begin{tabular}{l}
$\alpha\left(s\right)=\pi_M^{\mathbb{N}^{+}}\left(c\left(s\right)\right)$ is a timelike curve \\
$v=c\left(0\right)\in \mathbb{N}^{+}\left(V\right)$ \\
$\xi=c'\left(0\right)\in T_v \mathbb{N}^{+}\left(V\right)$
\end{tabular}
\right.
\]
then
\[
\theta_{\mathbf{g}}\left(\xi\right)=\mathbf{g}\left(v,\alpha'\left(0\right)\right)\neq 0
\]
since $v$ is null and $\alpha'\left(0\right)$ timelike.
This implies that $\xi\notin \mathcal{H}^{TV}_v$.
So, we have that $T_v TV = span\left\{\xi\right\} \oplus \mathcal{H}^{TV}_v$ and since $span\left\{\xi\right\}\subset T_v\mathbb{N}^{+}\left(V\right)$ and $\mathcal{H}^{\mathbb{N}^{+}\left(V\right)}_v=\mathcal{H}^{TV}_v \cap T_v\mathbb{N}^{+}\left(V\right)$ then we have that
\[
\mathrm{dim}\left(\mathcal{H}^{\mathbb{N}^{+}\left(V\right)}_v\right)=2m-2
\]
therefore $\mathcal{H}^{\mathbb{N}^{+}\left(V\right)}$ is a distribution of hyperplanes in $\mathbb{N}^{+}\left(V\right)$.

The next step is to restrict $\mathcal{H}^{\mathbb{N}^{+}\left(V\right)}$ to $T\mathbb{N}^{+}\left(C\right)$, where $C$ is a Cauchy surface of $V$.
Again, as done above, if $\gamma:I\rightarrow M$ is a null geodesic verifying $\gamma\left(0\right)\in C$ and $\gamma'\left(0\right)=v\in \mathbb{N}^{+}\left(C\right)$, since the vector subspace $Z=\left\{u\in T_{v}M:\mathbf{g}\left(v,u\right)=0\right\}$ is $m-1$--dimensional and $v=\gamma'\left(0\right)\in Z$, then $\mathrm{dim}\left(Z\cap T_{\gamma\left(0\right)}C \right)=m-2$.
Hence, we can pick up a vector $\eta\in T_{\gamma\left(0\right)}C$ such that $T_{\gamma\left(0\right)}C=span\left\{\eta\right\} \oplus \left(Z\cap T_{\gamma\left(0\right)}C\right)$.
Now, we can choose a differentiable curve $c:\left(-\epsilon,\epsilon\right)\rightarrow \mathbb{N}^{+}\left(C\right)$ verifying
\[
\left\{
\begin{tabular}{l}
$c\left(0\right)=v\in \mathbb{N}^{+}\left(C\right)$ \\
$c'\left(0\right)=\kappa\in T_{v} \mathbb{N}^{+}\left(C\right)$ \\
$\left(d\pi_M^{\mathbb{N}^{+}}\right)_{v}\left(\kappa\right)=\lambda \eta$ for $\lambda\neq 0$
\end{tabular}
\right.
\]
then
\[
\theta_{\mathbf{g}}\left(\kappa\right)=\mathbf{g}\left(v,\left(d\pi_M^{\mathbb{N}^{+}}\right)_{v}\left(\kappa\right)\right)=\mathbf{g}\left(v,\lambda \eta\right)\neq 0
\]
because $\eta\notin Z$, and this shows that $\kappa\notin \mathcal{H}^{\mathbb{N}^{+}\left(V\right)}_v$.
Then $T_{v} \mathbb{N}^{+}\left(V\right) = span\left\{\kappa\right\} \oplus \mathcal{H}^{\mathbb{N}^{+}\left(V\right)}_{v}$ and since $span\left\{\kappa\right\}\subset T_v\mathbb{N}^{+}\left(C\right)$ and $\mathcal{H}^{\mathbb{N}^{+}\left(C\right)}_v=\mathcal{H}^{\mathbb{N}^{+}\left(V\right)}_v \cap T_v\mathbb{N}^{+}\left(C\right)$, then it follows
\[
\mathrm{dim}\left(\mathcal{H}^{\mathbb{N}^{+}\left(C\right)}_{v}\right)=\mathrm{dim}\left(T_v\mathbb{N}^{+}\left(C\right)\right)-1=2m-3
\]
thus $\mathcal{H}^{\mathbb{N}^{+}\left(C\right)}$ is a distribution of hyperplanes in $\mathbb{N}^{+}\left(C\right)$.

It is possible to repeat the previous argument to show that the restriction of $\mathcal{H}^{\mathbb{N}^{+}\left(C\right)}$ to $T\Omega$ defines a distribution of hyperplanes.
In fact, consider some $\eta\in T_{\gamma\left(0\right)}C$ in the same condition as before and take a differentiable curve $c:\left(-\epsilon,\epsilon\right)\rightarrow \Omega$ verifying
\[
\left\{
\begin{tabular}{l}
$c\left(0\right)=v\in \Omega$ \\
$c'\left(0\right)=\kappa\in T_{v} \Omega$ \\
$\left(d\pi_M^{\mathbb{N}^{+}}\right)_{v}\left(\kappa\right)=\lambda \eta$ for $\lambda\neq 0$
\end{tabular}
\right.
\]
then again
\[
\theta_{\mathbf{g}}\left(\kappa\right)=\mathbf{g}\left(v,\lambda \eta\right)\neq 0
\]
showing that $\kappa\notin \mathcal{H}^{\mathbb{N}^{+}\left(C\right)}_v$.
Then $T_{v} \mathbb{N}^{+}\left(C\right) = \mathrm{span}\left\{\kappa\right\} \oplus \mathcal{H}^{\mathbb{N}^{+}\left(C\right)}_{v}$ and since $\mathrm{span}\left\{\kappa\right\}\subset T_v \Omega$ then we have that
\[
\mathrm{dim}\left(\mathcal{H}^{\Omega}_{v}\right)=\mathrm{dim}\left(T_v\Omega\right)-1=2m-4
\]
thus $\mathcal{H}^{\Omega}$ is a distribution of hyperplanes in $\Omega\subset\mathbb{N}^{+}\left(C\right)$.

By this process of restriction from $TV$ to $\Omega$ we have passed $\mathcal{H}^{TV}\subset TTV$ as a distribution of hyperplanes $\mathcal{H}^{\Omega}\subset T\Omega \subset TTV$.
Moreover since $\mathcal{H}^{TV}=\mathrm{ker}\left(\theta_{\mathbf{g}}\right)$ and $\mathcal{H}^{\Omega} = T\Omega \cap \mathcal{H}^{TV}$ then
\[
\mathcal{H}^{\Omega}=\mathrm{ker}\left(\left.\theta_{\mathbf{g}}\right|_{\Omega}\right)
\]
where $\left.\theta_{\mathbf{g}}\right|_{\Omega}$ denotes the restriction of $\theta_{\mathbf{g}}$ to $\Omega$.
This fact is important in order to show that $\mathcal{H}^{\Omega}$ is a contact structure.

Then, using the diffeomorphisms in (\ref{OmegaN}), $\mathcal{H}^{\Omega}$ passes to $\mathcal{U}\subset \mathcal{N}$ as a distribution of hyperplanes of dimension $2m-4$.
Let us denote by $\mathcal{H}\subset T\mathcal{N}$ said distribution.

\begin{proposition}
If $\mathcal{U}\subset \mathcal{N}$ and $T\in \mathfrak{X}\left(M\right)$ is a given global non-vanishing timelike vector field as above, then the distribution of hyperplanes  
\begin{equation}\label{taut-form-N}
\mathcal{H}\left(\mathcal{U}\right) = \left\{\left[J\right]\in T_{\gamma}\mathcal{U}: \mathbf{g}\left(\gamma'\left(0\right),J\left(0\right) \right)= 0 \text{ with } \mathbf{g}\left(\gamma'\left(0\right), T \right)=-1 \right\}
\end{equation}
is a contact structure.
\end{proposition}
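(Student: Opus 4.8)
The plan is to verify that the locally defined $1$--form $\theta_0$ whose kernel is $\mathcal{H}(\mathcal{U})$ is a contact form, i.e.\ that it satisfies the maximal non--integrability condition. By Lemma~\ref{lemma-non-degen} it suffices to check that $\theta_0 \wedge (d\theta_0)^{n} \neq 0$, where $\dim \mathcal{N} = 2m-3 = 2n+1$, so $n = m-2$; equivalently, by the same lemma, that $d\theta_0$ restricted to the hyperplane field $\mathcal{H}(\mathcal{U})$ is non--degenerate. Rather than computing on $\mathcal{N}$ directly, I would exploit the chain of diffeomorphisms in (\ref{OmegaN}) and the fact, established just before the statement, that $\mathcal{H}^{\Omega} = \ker(\theta_{\mathbf{g}}|_{\Omega})$ and that $\mathcal{H}(\mathcal{U})$ is the image of $\mathcal{H}^{\Omega}$ under the diffeomorphism $\Omega \to \mathcal{U}$. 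Since non--degeneracy of $d\eta|_{\ker\eta}$ is a diffeomorphism--invariant property of a hyperplane distribution, it is enough to prove that $\theta_{\mathbf{g}}|_{\Omega}$ is a contact form on $\Omega$, or better, to work upstairs on $TM$ with $\theta_{\mathbf{g}} = \widehat{\mathbf{g}}^{*}\theta$ and its differential $d\theta_{\mathbf{g}} = \widehat{\mathbf{g}}^{*}(d\theta) = -\widehat{\mathbf{g}}^{*}\omega$, the pullback of (minus) the canonical symplectic form.

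The key steps, in order, are: (1) Identify, via Proposition~\ref{prop00399}, the tangent space $T_\gamma \mathcal{U} = \mathcal{L}(\gamma)$ with a quotient of Jacobi fields, and match the hyperplane $\mathcal{H}(\mathcal{U})_\gamma = \{[J] : \mathbf{g}(\gamma'(0), J(0)) = 0\}$ against $\mathcal{H}^{\Omega}$ under the identifications of (\ref{cadena-espacios}); in particular verify this subspace is well defined on $\mathcal{L}(\gamma)$, which holds because adding $(a+bt)\gamma'(t)$ changes $J(0)$ by $a\gamma'(0)$ and $\mathbf{g}(\gamma'(0),\gamma'(0)) = 0$. (2) Compute $d\theta_{\mathbf{g}}$ restricted to $\mathcal{H}^{\Omega}$: using $\zeta$ from Proposition~\ref{prop00254}, a tangent vector to $TM$ at $u_0 = \gamma'(0)$ corresponds to a Jacobi field $J$ with data $(J(0), J'(0))$, and the pullback of the canonical symplectic form $\omega$ to $TM$ via $\widehat{\mathbf{g}}$ evaluated on two such Jacobi fields $J_1, J_2$ gives the familiar Wronskian--type expression $\mathbf{g}(J_1(0), J_2'(0)) - \mathbf{g}(J_1'(0), J_2(0))$, which is independent of the point along $\gamma$ precisely because $J_i$ are Jacobi fields. (3) Show this symplectic form on $\mathcal{J}(\gamma) \cong T_{u_0}TM$, when restricted to $\mathcal{J}_L(\gamma)$ and then passed to the quotient $\mathcal{L}(\gamma) = \mathcal{J}_L(\gamma)/\mathcal{J}_0(\gamma)$, descends to a well--defined skew form on $\mathcal{L}(\gamma)$ and, restricted to the hyperplane $\mathcal{H}(\mathcal{U})_\gamma$, is non--degenerate; its radical is exactly killed by quotienting out $\mathcal{J}_0(\gamma)$, which is why the hyperplane (of dimension $2m-4$, even) carries a non--degenerate form.

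I expect step (3) — verifying that the Wronskian form descends to the quotient $\mathcal{L}(\gamma)$ and is non--degenerate on the contact hyperplane — to be the main obstacle. The delicate point is the bookkeeping of radicals: the Wronskian $W(J_1,J_2) = \mathbf{g}(J_1(0), J_2'(0)) - \mathbf{g}(J_1'(0), J_2(0))$ on the full $2m$--dimensional $\mathcal{J}(\gamma)$ is non--degenerate (it is essentially $\omega$), but on $\mathcal{J}_L(\gamma)$ it acquires a one--dimensional radical spanned by $t\gamma'(t)$, while $\gamma'(t)$ itself pairs nontrivially with that radical vector; one must check carefully that after restricting to the hyperplane $\mathbf{g}(\gamma'(0), J(0)) = 0$ inside $\mathcal{L}(\gamma)$ the induced form is genuinely non--degenerate. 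Concretely, I would choose a $\mathbf{g}$--orthonormal parallel frame along $\gamma$ adapted to a null pair containing $\gamma'$, write out $J(0)$ and $J'(0)$ in components, reduce modulo $\mathcal{J}_0(\gamma)$, and read off the matrix of $W$ on the contact hyperplane, checking it has full rank $2m-4$. Once this linear--algebra computation is in place, Lemma~\ref{lemma-non-degen} and Lemma~\ref{lemma-alfa=falpa} (to handle the freedom in the choice of defining $1$--form, i.e.\ the choice of $T$) finish the proof.
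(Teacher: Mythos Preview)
Your approach is essentially the same as the paper's: pull back $\theta$ to $TM$ via $\widehat{\mathbf{g}}$, pass to $\Omega$ using the diffeomorphism $\Omega \simeq \mathcal{U}$, identify $-d\theta_{\mathbf{g}}$ with the Wronskian form
\[
\omega_0\bigl([J_1],[J_2]\bigr)=\mathbf{g}\bigl(J_1(0),J_2'(0)\bigr)-\mathbf{g}\bigl(J_2(0),J_1'(0)\bigr),
\]
and check non--degeneracy on the hyperplane $\{[J]:\mathbf{g}(\gamma'(0),J(0))=0\}$.

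Two remarks. First, your description of the radical in step~(3) is reversed. On $\mathcal{J}_L(\gamma)$ the radical of the Wronskian is spanned by $\gamma'(t)$, \emph{not} by $t\gamma'(t)$: for $J_2\in\mathcal{J}_L(\gamma)$ one has $\mathbf{g}(J_2',\gamma')\equiv 0$, so $W(\gamma',J_2)=\mathbf{g}(\gamma'(0),J_2'(0))=0$, whereas $W(t\gamma',J_2)=-\mathbf{g}(\gamma'(0),J_2(0))$ need not vanish. Also $W(\gamma',t\gamma')=\mathbf{g}(\gamma'(0),\gamma'(0))=0$, so your claim that ``$\gamma'(t)$ itself pairs nontrivially with that radical vector'' is false. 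This does not break your plan---an explicit frame computation would straighten it out---but it is worth getting right before you execute.

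Second, the paper avoids the frame computation you propose for step~(3) by a direct test--vector argument: given $[J_1]\in\mathcal{H}$ with $\omega_0([J_1],[J_2])=0$ for all $[J_2]\in\mathcal{H}$, first take $[J_2]$ with $J_2'(0)=0$ and $J_2(0)\in\{\gamma'(0)\}^\perp$ arbitrary to force $J_1'(0)\in\mathrm{span}\{\gamma'(0)\}$, then take $[J_2]$ with $J_2(0)=0$ and $J_2'(0)\in\{\gamma'(0)\}^\perp$ arbitrary to force $J_1(0)\in\mathrm{span}\{\gamma'(0)\}$, hence $[J_1]=0$ in $\mathcal{L}(\gamma)$. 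This is shorter than building an adapted null frame and reading off a $(2m-4)\times(2m-4)$ matrix, and it sidesteps the radical bookkeeping entirely.
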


\begin{proof}
Since $\omega = -d\theta$, then taking the exterior derivative on $\theta_{\mathbf{g}}$ we obtain
\begin{equation}\label{omegag}
\omega_{\mathbf{g}} = -d\theta_{\mathbf{g}} \, ,
\end{equation}
therefore we have
\begin{equation}\label{omega_g}
\omega_{\mathbf{g}} = g_{ij}dx^j \wedge dv^i + \frac{\partial g_{ij}}{\partial x^k}v^i dx^j \wedge dx^k
\end{equation}
that clearly shows that $\omega_{\mathbf{g}}$ is a symplectic 2--form in $TM$ (notice that $\omega_{\mathbf{g}}^{ n} = \det (g_{ij}) \,  dx^1 \wedge \cdots \wedge dx^n \wedge dv^1 \wedge \cdots \wedge dv^n \neq 0$).

Consider two curves $u_n\left(s\right)=u_n^i\left(s\right)\left(\frac{\partial}{\partial x^i}\right)_{\alpha_n\left(s\right)}\in TM$ where $n=1,2$ such that
\begin{equation*}
\begin{tabular}{l}
$\alpha'_n\left(s\right)= a_n^i\left(s\right)\left(\frac{\partial}{\partial x^i}\right)_{\alpha_n\left(s\right)}$ \\
$u'_n\left(s\right)=a_n^i\left(s\right)\left(\frac{\partial}{\partial x^i}\right)_{u_n\left(s\right)}+ \frac{d u^i_n}{ds}\left(s\right)\left(\frac{\partial}{\partial v^i}\right)_{u_n\left(s\right)}$
\end{tabular}
\end{equation*}
and recall that
\[
\frac{Du_n}{ds} = \left(\frac{d u^k_n}{ds}+ \Gamma^k_{ij}a^i_n u^j_n \right)\left(\frac{\partial}{\partial x^k}\right)_{\alpha_n}
\]
calling $\frac{D^k u_n}{ds}= \frac{d u^k_n}{ds}+ \Gamma^k_{ij}a^i_n u^j_n$ the $k$--th component of $\frac{Du_n}{ds}$.
If $u=u_1\left(0\right)=u_2\left(0\right)$ and $\xi_n=u'_n\left(0\right)$ for $n=1,2$, then we have that:
\begin{align}
\omega_{\mathbf{g}}\left(\xi_1,\xi_2\right) &= g_{ij}a^i_1 \frac{D^j u_2}{ds}- g_{ij}a^j_2 \frac{D^i u_1}{ds} + \left(g_{kl}\Gamma^l_{ji} - g_{jl}\Gamma^l_{ki} +\frac{\partial g_{ij}}{\partial x^k}- \frac{\partial g_{ik}}{\partial x^j}\right)u^i a^j_1 a_2^k  = \nonumber \\
&= g_{ij}a^i_1 \frac{D^j u_2}{ds}- g_{ij}a^j_2 \frac{D^i u_1}{ds} = \mathbf{g}\left(\alpha'_1\left(0\right), \frac{D u_2}{ds}\left(0\right) \right) - \mathbf{g}\left(\alpha'_2\left(0\right), \frac{D u_1}{ds}\left(0\right) \right) \, . \nonumber
\end{align}

Since the exterior derivative commutes with the restriction to submanifolds, then
\[
\left.\omega_{\mathbf{g}}\right|_{\Omega}= -\left.\left(d\theta_{\mathbf{g}}\right)\right|_{\Omega}=-\left.d\left(\theta_{\mathbf{g}}\right|_{\Omega}\right)
\]
Proposition \ref{prop00254} permit to transmit $\left.\theta_{\mathbf{g}}\right|_{\Omega},\left.\omega_{\mathbf{g}}\right|_{\Omega}$ to $\mathcal{L}\left(\gamma_u\right)$ pointwise.
Calling $\theta_0$ and $\omega_0$ the resulting forms, then for $\left[J\right],\left[J_1\right],\left[J_2\right]\in \mathcal{L}\left(\gamma_u\right)$ we have
\begin{equation*}
\theta_{0}\left(\left[J\right]\right) = \mathbf{g}\left(\gamma'_u\left(0\right),J\left(0\right) \right)
\end{equation*}
where $\gamma_u$ is parametrized such that $\gamma'_u\left(0\right)\in \Omega$, and
\begin{equation}\label{symp-form-N}
\omega_{0}\left(\left[J_1\right],\left[J_2\right]\right) = \mathbf{g}\left(J_1\left(0\right), J'_2\left(0\right) \right) - \mathbf{g}\left(J_2\left(0\right), J'_1\left(0\right) \right)
\end{equation}

In order to prove that $\mathcal{H}$ is a contact structure, we will show that $\left.\omega_0\right|_{\mathcal{H}}$ is non--degenerated.
Consider $\left[J_1\right],\left[J_2\right]\in \mathcal{H}$, then the initial values of $J_1$ and $J_2$ in expression \ref{symp-form-N} verify
\begin{equation}\label{Jac-Init-Values}
\mathbf{g}\left(J_i\left(0\right),\gamma'_u\left(0\right)\right)=0 \,\, ; \qquad 
\mathbf{g}\left(J'_i\left(0\right),\gamma'_u\left(0\right)\right)=0 \, .
\end{equation}
for $i=1,2$, that is $J_i\left(0\right),J'_i\left(0\right)\in \left\{\gamma'_u\right\}^{\perp}=\left\{v\in T_{\gamma_u\left(0\right)}M:\mathbf{g}\left(v,\gamma'_u\left(0\right)\right)=0\right\}$.

For a given $\left[J_1\right]\in \mathcal{H}$, if $\omega_{0}\left(\left[J_1\right],\left[J_2\right]\right) = 0$ for all $\left[J_2\right]\in \mathcal{L}\left(\gamma_u\right)$, then in particular, also for $\left[J_2\right]$ verifying $J'_2\left(0\right)=0$, we have
\[
\omega_{0}\left(\left[J_1\right],\left[J_2\right]\right) = 0 \Rightarrow  \mathbf{g}\left(J_2\left(0\right), J'_1\left(0\right) \right)=0
\]
Since $J'_1\left(0\right)\in \left\{\gamma'_u\right\}^{\perp}$, the only vector $J'_1\left(0\right)$ such that $\mathbf{g}\left(J_2\left(0\right), J'_1\left(0\right) \right)=0$ for all $J_2\left(0\right)\in \left\{\gamma'_u\right\}^{\perp}$ is, by definition of $\left\{\gamma'_u\right\}^{\perp}$, the vector $J'_1\left(0\right)=0\left(\mathrm{mod}\gamma'_u\right)$.

On the other hand, for $\left[J_2\right]$ verifying $J_2\left(0\right)=0$ we have
\[
\omega_{0}\left(\left[J_1\right],\left[J_2\right]\right) = 0 \Rightarrow  \mathbf{g}\left(J_1\left(0\right), J'_2\left(0\right) \right)=0
\]
and again, since $J_1\left(0\right)\in \left\{\gamma'_u\right\}^{\perp}$ then the only vector $J_1\left(0\right)$ such that $\mathbf{g}\left(J_1\left(0\right), J'_2\left(0\right) \right)=0$ for all $J'_2\left(0\right)\in \left\{\gamma'_u\right\}^{\perp}$ is $J_1\left(0\right)=0\left(\mathrm{mod}\gamma'_u\right)$.

Thus, the only $\left[J_1\right]\in \mathcal{H}$ such that $\omega_{0}\left(\left[J_1\right],\left[J_2\right]\right) = 0$ for all $\left[J_2\right]\in \mathcal{H}$ is $J_1=0\left(\mathrm{mod}\gamma'_u\right)$, therefore $\left.\omega_{0}\right|_{\mathcal{H}}$ is non--degenerated.
This shows that $\mathcal{H}$ is a contact structure in $\mathcal{N}$.
\hfill$\Box$\bigskip

\end{proof}

Let us take $\gamma\in\mathcal{U}\cap\mathcal{V}$, since in general $\frac{d}{dt}\mathbf{g}\left(\gamma'\left(t\right),T \left(\gamma\left(t\right)\right)\right)\neq 0$, then there are different parameter for $\gamma$ in order to write $\mathcal{H}\left(\mathcal{U}\right)$ and $\mathcal{H}\left(\mathcal{V}\right)$ as in expression \ref{taut-form-N}.
If we consider that $\gamma=\gamma\left(t\right)$ and $\overline{\gamma}=\overline{\gamma}\left(\tau\right)$ are the parametrizations of $\gamma\in \mathcal{U}\cap\mathcal{V}$ such that $\overline{\gamma}\left(\tau\right)=\gamma\left(a\tau +b\right)$ verifying
\[
\mathbf{g}\left(\gamma'\left(0\right),T\right)= -1 \, \, ; \qquad  
\mathbf{g}\left(\overline{\gamma}'\left(0\right),T\right)= -1 \, .
\]
By definition of $\mathcal{J}_L\left(\overline{\gamma}\right)$, we have that $\mathbf{g}\left(\overline{J}\left(\tau\right),\overline{\gamma}'\left(\tau\right)\right)$ is constant, therefore
\[
\mathbf{g}\left(\overline{J}\left(0\right),\overline{\gamma}'\left(0\right)\right)=\mathbf{g}\left(\overline{J}\left(-b/a\right),\overline{\gamma}'\left(-b/a\right)\right) = a \mathbf{g}\left(J\left(0\right),\gamma'\left(0\right)\right)
\]
as we have seen in remark \ref{remark-Jacobi-init-val}, whence since $\overline{\gamma}\left(-b/a\right)=\gamma\left(0\right)$ we have
\[
\mathbf{g}\left(\overline{J}\left(-b/a\right),\overline{\gamma}'\left(-b/a\right)\right) = 0 \Leftrightarrow  \mathbf{g}\left(J\left(0\right),\gamma'\left(0\right)\right) = 0
\]
The same argument above is valid to prove that $\mathcal{H}_{\gamma}$ does not depends on the timelike vector field used to define $\Omega$, because it only affects to the parametrization of $\gamma$.
This shows that $\mathcal{H}_{\gamma}$ is well defined and does not depends on the neighbourhood used in its construction.    In addition, using Lemma \ref{JJbar}, can be shown that this contact structure does not depend on the auxiliary metric $\mathbf{g}$ selected in the conformal class $\mathcal{C}$.

At this point, we may consider a covering $\left\{ \mathcal{U}_{\delta} \right\}_{\delta\in I}\subset \mathcal{N}$ and, for any $\delta\in I$, consider the local 1--form $\theta_{0}^{\delta}$ defining the contact structure $\mathcal{H}$ as before.
If we take a partition of unity $\left\{ \chi_{\delta} \right\}_{\delta\in I}$ subordinated to the covering $\left\{ \mathcal{U}_{\delta} \right\}_{\delta\in I}$ then we can define a global 1--form by:
\[
\theta_0 \left(\left[J\right]\right)= \sum_{\delta\in I} \chi_{\delta}\left(\left[J\right]\right)\cdot\theta_{0}^{\delta}\left(\left[J\right]\right)
\]
then the contact structure $\mathcal{H}$ is cooriented since $\theta_0$ is globally defined and, by Lemma \ref{lemma-non-degen}, remains maximally non-integrable.

In the following section we will provide a slightly more intrinsic construction of the canonical contact structure on the space of light rays based on symplectic reduction techniques.


\section{The contact structure in $\mathcal{N}$ and symplectic reduction}

Finally, in this section, we will illustrate the construction of the contact structure in $\mathcal{N}$ in a equivalent but more elegant way as done in section~\ref{sec:contact-structure-1}.




\subsection{The coisotropic reduction of $\mathbb{N}^+$ and the symplectic structure on the space of scaled null geodesics $\mathcal{N}_s$}

The celebrated Theorem of Marsden--Weinstein \cite{MW74} claims that a $2m$--dimensional symplectic manifold $P$, in which a Lie group $G$ acts preserving the symplectic form $\omega$ and possessing an equivariant momentum map, can be reduced into another $2(m-r)$--dimensional symplectic manifold $P_{\mu}$, called the Marsden-Weinstein reduction of $P$ with respect to $\mu$, under the appropriate conditions where $\mu$ is an element in the dual of the Lie algebra of the group $G$ and $r$ is the dimension of the coadjoint orbit passing through $\mu$.

The purpose of this section is to show that it is possible to derive the canonical contact structure on the space of light rays by a judiciously use of Marsden-Weinstein reduction when the geodesic flow defines an action of the Abelian group $\mathbb{R}$ in $TM$.   However we will choose a different, simpler, however more general path here.  Simpler in the sense that we will not need the full extent of MW reduction theorem, but a simplified version of it obtained when restricted to scalar momentum maps, but more general in the sense that it will not be necessary to assume the existence of a group action.   Actually the setting we will be using is a particular instance of the scheme called generalized symplectic reduction (see for instance \cite[Ch.7.3]{Ca14} and references therein).

The result we are going to obtain is based on the following elementary algebraic fact.  Let $(E,\omega)$ be a linear symplectic space.  Let $W \subset E$ be a linear subspace.  We denote by $W^\perp$ the symplectic orthogonal to $W$, i.e., $W^\perp = \{ u \in E \mid \omega (u,w) = 0 \, , \forall w \in W  \}$.   A subspace $W$ is called coisotropic if $W^\perp \subset W$.   It is easy to show that for any subspace $W$, $\dim W + \dim W^\perp = \dim E$.    Hence it is obvious that if $H$ is a linear hyperplane, that is a linear subspace of codimension 1, then $H$ is coistropic (clearly because $\omega_H$ is degenerate, then $H \cap H^\perp \neq \{ \mathbf{0} \} $ and because $H^\perp$ is one-dimensional, then $H^\perp \subset H$).  Moreover the quotient space $H/H^\perp$ inherits a canonical symplectic form $\bar{\omega}$ defined by the expression:
$$
\bar{\omega}(u_1 + H^\perp, u_2 + H^\perp ) = \omega (u_1, u_2) \, , \qquad \forall u_1.u_2 \in H 	\, .
$$

The linear result above has a natural geometrical extension:

\begin{theorem}\label{coisotropic}  Let $(P,\omega)$ be symplectic manifold and $i\colon S \to P$ be a hypersurface, i.e., a codimension 1 immersed manifold.  Then:

\begin{enumerate}
\item[i.]  The symplectic form $\omega$ induces a 1-dimensional distribution $K$ on $S$, called the characteristic distribution of $\omega$, defined as $K_x = \ker i^*\omega_x = T_xS^\perp \subset T_xS$.

\item[ii.]  If we denote by $\mathcal{K}$ the 1-dimensional foliation defined by the distribution $K$ and $\overline{S} = S/\mathcal{K}$ has the structure of a quotient manifold, i.e., the canonical projection map $\rho \colon S \to S/\mathcal{K}$ is a submersion, then there exists a unique symplectic form $\bar{\omega}$ on $\overline{S}$ such that $\rho^*\bar{\omega} = i^* \omega$.

\item[iii.]  If $\omega = -d\theta$ and there exists $\bar{\theta}$ a 1-form on $\overline{S}$ such that $\rho^*\bar{\theta} = i^*\theta$, then $\bar{\omega} = -d \bar{\theta}$.

\end{enumerate}
\end{theorem}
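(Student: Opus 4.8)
The plan is to handle the three items in turn, reducing everything to the pointwise linear statement recalled just before the theorem together with the standard descent criterion for differential forms along a surjective submersion.

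For item (i), since the assertion is local on $S$ I would first restrict to a neighbourhood on which the immersion $i$ is an embedding. At each $x\in S$ apply the linear fact with $E=T_xP$, $\omega=\omega_x$ and $H=T_xS$, which is a linear hyperplane because $S$ has codimension $1$; this yields that $T_xS^\perp\subset T_xS$ is one-dimensional and that it is exactly the radical of $i^*\omega_x$, i.e. $\ker i^*\omega_x=\{v\in T_xS:\omega_x(v,w)=0\ \forall w\in T_xS\}=T_xS\cap T_xS^\perp=T_xS^\perp$. The only point needing more than linear algebra is smoothness: the $2$-form $i^*\omega$ has locally constant rank $\dim P-2=\dim S-1$ on $S$, and a $2$-form of locally constant rank has kernel a smooth subbundle; hence $K$ is a smooth line field on $S$. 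I expect this constant-rank/smoothness remark to be the only genuinely delicate step, everything else being formal.

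For item (ii), a rank-$1$ distribution is automatically involutive, so by Fr\"obenius $K$ integrates to the foliation $\mathcal{K}$; granting the hypothesis that $\overline{S}=S/\mathcal{K}$ is a manifold with $\rho$ a submersion, I would invoke the standard criterion: a form $\alpha$ on $S$ equals $\rho^*\bar\alpha$ for a unique $\bar\alpha$ on $\overline{S}$ precisely when $\alpha$ is \emph{basic}, that is $i_X\alpha=0$ and $\mathcal{L}_X\alpha=0$ for every $X$ tangent to the fibres of $\rho$, i.e. for every $X\in K$. The first condition holds by the definition $K=\ker i^*\omega$. For the second, Cartan's formula together with $d(i^*\omega)=i^*(d\omega)=0$ gives $\mathcal{L}_X(i^*\omega)=i_X\,d(i^*\omega)+d(i_X\,i^*\omega)=0$; here the closedness of $\omega$ is precisely what makes the reduction work. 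Hence $i^*\omega=\rho^*\bar\omega$ for a unique $2$-form $\bar\omega$ on $\overline{S}$, uniqueness coming from the injectivity of $\rho^*$. Closedness of $\bar\omega$ follows since $\rho^*(d\bar\omega)=d(i^*\omega)=0$ and $\rho^*$ is injective; nondegeneracy is immediate once one identifies $T_{\rho(x)}\overline{S}$ with $T_xS/K_x$, for if $\bar\omega(\rho_*u,\cdot)$ vanishes then $u\in\ker i^*\omega_x=K_x$, so $\rho_*u=0$ — this is exactly the canonical symplectic form on $H/H^\perp$ from the linear statement.

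For item (iii), assume $\omega=-d\theta$ and that $\bar\theta$ is a $1$-form on $\overline{S}$ with $\rho^*\bar\theta=i^*\theta$. Then $\rho^*(-d\bar\theta)=-d(\rho^*\bar\theta)=-d(i^*\theta)=i^*(-d\theta)=i^*\omega=\rho^*\bar\omega$, and since $\rho$ is a surjective submersion $\rho^*$ is injective on forms, so $\bar\omega=-d\bar\theta$. Thus all three assertions reduce to the pointwise linear algebra plus the naturality of $d$ and of pullback, the only place requiring a little care being the constant-rank argument underpinning the smoothness of $K$ in item (i).
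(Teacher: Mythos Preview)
Your proof is correct and follows essentially the same approach as the paper: pointwise reduction to the linear-algebra fact for (i), the basic-form descent criterion (via $i_X(i^*\omega)=0$ and $\mathcal{L}_X(i^*\omega)=0$) for (ii), and the chain $\rho^*\bar\omega=i^*\omega=-di^*\theta=-d\rho^*\bar\theta=\rho^*(-d\bar\theta)$ plus injectivity of $\rho^*$ for (iii). If anything, you are more thorough than the paper, which does not spell out the constant-rank smoothness of $K$ nor the verification that $\bar\omega$ is closed and nondegenerate.
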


\begin{proof}
The proof of (i) is just the restriction of the algebraic statements above to $W = T_xS  \subset E = T_xP$.

To proof (ii), notice that a vector tangent to the leaves of $\mathcal{K}$ is in the kernel of $i*\omega$, then for any vector field $X$ on $S$ tangent to the leaves of $\mathcal{K}$, i.e, projectable to $0$ under $\rho$, we have $i_X(i^*\omega) = 0$, and $\mathcal{L}_X (i^*\omega) = 0$, then the 2-form $i^*\omega$ is projectable under $\rho$.

The statement (iii) is trivial because $\rho^*\bar{\omega} = i^*\omega = i^*(-d\theta) = - di^*\theta = -d\rho^*\bar{\theta} = \rho^*(-d\bar{\theta})$ and $\rho$ is a submersion.  
\hfill$\Box$\bigskip

\end{proof}

The previous theorem states that any hypersurface on a symplectic manifold is coisotropic and that, provided that the quotient space is a manifold, the space of leaves of its characteristic foliation, inherits a symplectic structure. Such space of leaves is thus the reduced symplectic manifold we are seeking for and it will be called the coisotropic reduction of the hypersurface $S$.    In addition to the previous reduction mechanism, we will also use the following passing to the quotient mechanism for hyperplane distributions.

\begin{theorem}\label{contact_red}  Let $(P, \omega = d\theta )$ be an exact symplectic manifold and $\pi \colon P \to N$ be a submersion on a manifold of dimension $\dim P - 1$ and such that it projects the hyperplane distribution $H = \ker \theta$, that is there exists a hyperplane distribution $H^N$ in $N$ such that for any $x\in P$, $\pi_*(x)H_x = H_{\pi(x)}^N$.  Then $H^N$ defines a contact structure on $N$.
\end{theorem}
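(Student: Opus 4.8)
The plan is to reduce the maximal non-integrability of $H^{N}$ first to a computation on $P$ and then to a purely pointwise linear-algebra fact about $\theta$ and $\omega$. Note at the outset that, since $H=\ker\theta$ is a hyperplane distribution, $\theta$ is nowhere vanishing, and that $\dim N=2m-1$ where $2m=\dim P$; thus, by Lemma~\ref{lemma-non-degen}, $H^{N}$ is a contact structure precisely when every local $1$--form $\eta$ with $H^{N}=\ker\eta$ satisfies $\eta\wedge(d\eta)^{m-1}\neq 0$. Fix such an $\eta$ on an open set $\mathcal{V}\subseteq N$. Because $\pi$ is a submersion, $\pi^{*}$ is injective on differential forms of degree $\leq 2m-1$, and $\pi^{*}\!\big(\eta\wedge(d\eta)^{m-1}\big)=(\pi^{*}\eta)\wedge(d\,\pi^{*}\eta)^{m-1}$; hence it suffices to show that this $(2m-1)$--form is nowhere zero on $\pi^{-1}(\mathcal{V})$.

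The next step is to recognise $\pi^{*}\eta$ as a nowhere-vanishing multiple of $\theta$. From $\pi_{*}H_{x}=H^{N}_{\pi(x)}$ together with $\dim H_{x}=2m-1>2m-2=\dim H^{N}_{\pi(x)}$ one deduces $\ker(d\pi_{x})\subset H_{x}$ (otherwise $d\pi_{x}|_{H_{x}}$ would be injective with image of dimension $2m-1$), and then a dimension count gives $H_{x}=(d\pi_{x})^{-1}\big(H^{N}_{\pi(x)}\big)=\ker(\pi^{*}\eta)_{x}=\ker\theta_{x}$. So $\pi^{*}\eta=f\,\theta$ for a smooth nowhere-vanishing function $f$ on $\pi^{-1}(\mathcal{V})$, and expanding the $(m-1)$-st power with the help of $\theta\wedge\theta=0$ and $(df\wedge\theta)^{2}=0$,
\[
(\pi^{*}\eta)\wedge(d\,\pi^{*}\eta)^{m-1}=(f\theta)\wedge(df\wedge\theta+f\,\omega)^{m-1}=f^{m}\,\theta\wedge\omega^{m-1}.
\]

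The only genuine obstacle is then to prove that $\theta\wedge\omega^{m-1}$ is nowhere zero: this is false for a generic primitive of a symplectic form, and it is exactly here that the hypothesis $\theta\neq 0$ is used. I would argue pointwise: at $x\in P$ choose $u\in T_{x}P$ with $\theta(u)=1$; since $\theta\wedge\omega^{m}$ is a $(2m+1)$--form on a $2m$--manifold it vanishes, and contracting with $u$ yields
\[
0=i_{u}\!\big(\theta\wedge\omega^{m}\big)=\omega^{m}-m\,\theta\wedge(i_{u}\omega)\wedge\omega^{m-1},
\]
so $\theta\wedge\omega^{m-1}\wedge(i_{u}\omega)=\tfrac1m\,\omega^{m}\neq 0$, which forces $\theta\wedge\omega^{m-1}\neq 0$ at $x$. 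Combined with $f^{m}\neq 0$ this gives $(\pi^{*}\eta)\wedge(d\,\pi^{*}\eta)^{m-1}\neq 0$, hence $\eta\wedge(d\eta)^{m-1}\neq 0$, and Lemma~\ref{lemma-non-degen} then shows $H^{N}$ is a contact structure on $N$. (The same identity can be used to check directly that $d\eta$ restricted to $H^{N}$ is nondegenerate, if one prefers to avoid invoking Lemma~\ref{lemma-non-degen} a second time.)
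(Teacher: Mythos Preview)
Your proof is correct and takes a genuinely different route from the paper. The paper argues geometrically via coisotropic reduction: it asserts that $\ker\pi_*(x)$ coincides with the symplectic orthogonal $H_x^{\perp}$, so that $H^N_{\pi(x)}\cong H_x/H_x^{\perp}$ inherits the reduced symplectic structure of Theorem~\ref{coisotropic}; it then chooses a local section $\sigma$ of $\pi$, sets $\eta=\sigma^{*}\theta$, and observes that $d\eta|_{H^N}$ realises this reduced symplectic form, hence is nondegenerate. Your approach, by contrast, is purely algebraic on $P$: you pull back an arbitrary local defining form $\eta$ for $H^N$ to obtain $\pi^{*}\eta=f\theta$, reduce the contact criterion (via Lemma~\ref{lemma-non-degen}) to the single statement $\theta\wedge\omega^{m-1}\neq 0$, and then prove the latter by the contraction identity $i_u(\theta\wedge\omega^{m})=0$.

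What each buys: the paper's argument makes explicit the conceptual link with coisotropic reduction that is the theme of the surrounding section, and identifies the induced $2$--form on each $H^N_{\pi(x)}$ as the reduced form $\bar\omega$. Your argument is more self-contained---it does not rely on the claim $\ker\pi_*(x)=H_x^{\perp}$, which the paper states without justification---and it isolates exactly which features of $\theta$ are needed (nowhere-vanishing with $d\theta$ symplectic), with no further compatibility between $\pi$ and $\omega$ beyond the projection hypothesis on $H$.
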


\begin{proof}  Notice that necessarily, $\ker \pi_*(x) = H_x^\perp$ and $\omega$ induces a symplectic form $\bar{\omega}_x$ in $H/H^\perp$ because Thm. \ref{coisotropic}.  Moreover $H_x/H_x^\perp \cong H_{\pi(x)}^N$ and it inherits a symplectic form $\bar{\omega}_x$.    Finally, if we pick up a local section $\sigma$ of the submersion $\pi$; then the 1-form $\sigma^*\theta$ is such that $H^N = \ker \sigma^*\theta$ and $d(\sigma^*\theta)$ coincides with the symplectic form $\bar{\omega}_x$ when restricted to $H_x^N$.
\hfill$\Box$\bigskip

\end{proof}

The two previous results, Thm. \ref{coisotropic} and \ref{contact_red}, hold the key to understand how the quotient space $\mathcal{N}$ inherits a canonical contact structure.   Consider again a spacetime $(M,\mathbf{g})$ and the canonical identification provided by the metric $\hat{\mathbf{g}} \colon \widehat{T}M \to \widehat{T}^*M$ (which is just the Legendre transform corresponding to the Lagrangian function $L_{\mathbf{g}}(x, v) = \frac{1}{2} \mathbf{g}_x(v,v)$ on $TM$).    As we discussed at the beginning of Sect. \ref{sec:contact}, Eqs. (\ref{thetag}), (\ref{omegag}), we can pull-back the canonical 1-form $\theta$ on $T^*M$ along $\hat{\mathbf{g}}$ as well the canonical symplectic structure $\omega$ (Sect. \ref{sec:T*M}), that is, we obtain:
$$
\theta_g = \widehat{\mathbf{g}}^*\theta \, , \qquad \omega_g = \widehat{\mathbf{g}}^* \omega = - d\theta_g \, ,
$$
and $(\widehat{T}M, \omega_{\mathbf{g}})$ becomes a symplectic manifold.
Moreover $\mathbb{N}^+ \subset \widehat{T}M$ defines an hypersurface, hence by Thm. \ref{coisotropic} we can construct its coisotropic reduction.

We will denote by $\mathcal{N}_s$ the space of equivalence classes of future-oriented null geodesics that differ by a translation of the parameter.  Thus two parametrized null geodesics $\gamma_1 (t)$, $\gamma_2(t')$ are equivalent if there exists a real number $s$ such that $\gamma_2(t' ) = \gamma_1(t+s)$.   The equivalence class of null geodesics containing the parametrized geodesic $\gamma(t)$ such that $\gamma'(0) = v$ will be denoted by $\gamma_v$.

Clearly there is a natural projection $\pi\colon \mathcal{N}_s \to \mathcal{N}$ 
defined as $\pi (\gamma_v) = [\gamma]$.
The space $\mathcal{N}_s$ is sometimes called the space of \emph{scaled null geodesic}\index{space!scaled null geodesic}\index{scaled!null geodesic}\index{null!scaled -- geodesic} and describes equivalence classes of null geodesics distinguishing different scale parametrizations.

\begin{theorem}  Let $(M, \mathbf{g})$ be a spacetime, then:
\begin{enumerate}
\item[i.]  The characteristic distribution $K  = \ker \omega_{\mathbf{g}}\mid_{\mathbb{N}^+}$ is generated by the restriction of the geodesic spray $X_{\mathbf{g}}$ to $\mathbb{N}^+$ and $\mathbb{N}^+/K$ can be identified naturally with the space of scaled null geodesics $\mathcal{N}_s$.
\item[ii.]  If $M$ is strongly causal, $\mathcal{N}_s$ is a quotient manifold of $\mathbb{N}^+$, and it becomes a symplectic manifold with the canonical reduced symplectic structure obtained by coisotropic reduction of $\omega_{\mathbf{g}}$.
\end{enumerate}

\end{theorem}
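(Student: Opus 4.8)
The plan is to prove (i) by a direct computation that identifies $\mathbb{N}^{+}$ as a regular level set of the energy and then reads off the characteristic distribution from Lemma \ref{lemma-spray-hamilton}, and to deduce (ii) as an application of Theorem \ref{coisotropic}(ii) once the leaf space is shown to be a manifold.

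For part (i), I would first record that $\mathbb{N}^{+}$ is the future connected component of $H_{\mathbf{g}}^{-1}(0)\setminus\{0\}$, where $H_{\mathbf{g}}(v)=\tfrac12\mathbf{g}(v,v)$ is the energy on $TM$; since the fibre derivative of $H_{\mathbf{g}}$ at $v\neq 0$ is $\mathbf{g}(v,\cdot)\neq 0$, the value $0$ is regular over $\widehat{T}M$, so $\mathbb{N}^{+}$ is an embedded hypersurface of the symplectic manifold $(\widehat{T}M,\omega_{\mathbf{g}})$, $\omega_{\mathbf{g}}=\widehat{\mathbf{g}}^{*}\omega$. Theorem \ref{coisotropic}(i) then gives that the characteristic distribution $K=\ker(i^{*}\omega_{\mathbf{g}})$ is one--dimensional. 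Next I would pull the Hamilton identity $i_{X_{H}}\omega=dH$ back along $\widehat{\mathbf{g}}$ and use $\widehat{\mathbf{g}}_{*}X_{\mathbf{g}}=X_{H}$ (Lemma \ref{lemma-spray-hamilton}) to get $i_{X_{\mathbf{g}}}\omega_{\mathbf{g}}=dH_{\mathbf{g}}$; restricting to $\mathbb{N}^{+}$, where $X_{\mathbf{g}}$ is tangent and $H_{\mathbf{g}}\equiv 0$, yields $i_{X_{\mathbf{g}}}(i^{*}\omega_{\mathbf{g}})=d(i^{*}H_{\mathbf{g}})=0$, so $X_{\mathbf{g}}|_{\mathbb{N}^{+}}$ is a section of $K$; as a geodesic spray it is nowhere zero on $\widehat{T}M$, hence it spans $K$ pointwise. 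Finally, the integral curve of $X_{\mathbf{g}}$ through $v$ is $t\mapsto\gamma_{v}'(t)$, so $w$ lies on the leaf through $v$ iff $w=\gamma_{v}'(s)$ for some $s$, iff $\gamma_{w}(\cdot)=\gamma_{v}(\cdot+s)$; this is exactly the relation defining $\mathcal{N}_{s}$, so $v\mapsto\gamma_{v}$ induces the natural bijection $\mathbb{N}^{+}/K\cong\mathcal{N}_{s}$.

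For part (ii), the remaining point is to show that, when $M$ is strongly causal, the one--dimensional foliation $\mathcal{K}$ integrating $K$ is regular, so that $\mathcal{N}_{s}=\mathbb{N}^{+}/\mathcal{K}$ is a smooth manifold and $\rho\colon\mathbb{N}^{+}\to\mathcal{N}_{s}$ a submersion. I would obtain this by the argument behind Proposition \ref{prop00150}: strong causality (with the standing null pseudo--convexity hypothesis) forces a null geodesic segment to meet any relatively compact neighbourhood in finitely many components and excludes periodic null geodesics, so each leaf of $\mathcal{K}$ is an injectively immersed line and $\mathcal{K}$ carries a regular foliated atlas; equivalently, $K\subset\mathcal{D}=\mathrm{span}\{X_{\mathbf{g}},\Delta\}$ and inside each (embedded) leaf of the regular foliation of Proposition \ref{prop00150}, which in coordinates (scale, affine parameter) is diffeomorphic to $\mathbb{R}_{>0}\times\mathbb{R}$, the foliation $\mathcal{K}$ is the trivial one by the lines at fixed scale, so the leaf space is Hausdorff, smooth, and fibred by $\mathbb{R}_{>0}$ over $\mathcal{N}$. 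Granting this, $\mathbb{N}^{+}$ is a hypersurface of $(\widehat{T}M,\omega_{\mathbf{g}})$ whose characteristic foliation has manifold quotient, and Theorem \ref{coisotropic}(ii) delivers the unique symplectic form $\bar{\omega}$ on $\mathcal{N}_{s}$ with $\rho^{*}\bar{\omega}=i^{*}\omega_{\mathbf{g}}$, the reduced symplectic structure sought.

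I expect the main obstacle to be precisely the regularity of $\mathcal{K}$ in part (ii) --- proving that $\mathcal{N}_{s}$ is a genuine Hausdorff manifold rather than a set--theoretic quotient --- since that is where the causality hypotheses must be invoked and it mirrors, rather than follows formally from, Proposition \ref{prop00150}. The rest is bookkeeping with the Legendre transform $\widehat{\mathbf{g}}$ and with interior products and restrictions.
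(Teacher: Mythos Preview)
Your proposal is correct and follows essentially the same route as the paper: the identity $i_{X_{\mathbf{g}}}\omega_{\mathbf{g}}=dL$ (with $L(u)=\tfrac12\mathbf{g}(u,u)$) restricted to the level set $\mathbb{N}^{+}=L^{-1}(0)$ shows $X_{\mathbf{g}}$ spans the characteristic line field, the flow of $X_{\mathbf{g}}$ gives the identification with $\mathcal{N}_s$, and part (ii) is obtained by invoking the regularity argument of Proposition~\ref{prop00150} together with Theorem~\ref{coisotropic}(ii). The only cosmetic difference is that you route the Hamilton identity through $T^{*}M$ via Lemma~\ref{lemma-spray-hamilton}, whereas the paper stays on $TM$ and uses the Lagrangian directly; your treatment is in fact somewhat more detailed than the paper's own sketch.
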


\begin{proof}   To prove [i] we just check that $ {\omega_{\mathbf{g}}}_v (X_{\mathbf{g}}, Y) = dL_v(Y) = Y_v(L) = 0$ for all  $Y\in T_v(\mathbb{N}^+)$ because $\mathbb{N}^+ = L^{-1}(\mathbf{0})$ 
where $L$ is the Lagrangian function $L\left(u\right)=\frac{1}{2}\mathbf{g}\left(u,u\right)$

Notice that the flow $\varphi_t$ of the geodesic spray $X_{\mathbf{g}}$ is such that $\varphi_s(\gamma(t)) = \gamma(t+s)$ where $\gamma(t)$ is a parametrized geodesic.  Then the quotient $\mathbb{N}^+/K$ corresponds exactly to the notion of scaled null geodesic before.  We will denote, as before, by $\rho \colon \mathbb{N}^+ \to \mathcal{N}_s$ the canonical projection and, with the notations above, we get simply that $\rho(v) = \gamma_v$.

As $M$ is strongly causal, the proof of [ii] mimics the proof of Prop. \ref{prop00150}
.  Hence because [ii] in Thm. \ref{coisotropic}, we  conclude that the quotient manifold inherits a canonical symplectic structure by coisotropic reduction of $\omega_{\mathbf{g}}$.
\hfill$\Box$\bigskip

\end{proof}


\thebibliography{C}

\bibitem[Ab87]{Ab87} R. Abraham, J. Marsden. \emph{Foundations of Mechanics}. Addison-Wesley, 1987.

\bibitem[Ab88]{Ab88} R. Abraham, J. Marsden, T. Ratiu. \emph{Manifolds, tensor analysis, and applications}. Springer-Verlag, 1988.

\bibitem[Ar89]{Ar89} V.I. Arnold. \emph{Mathematical methods of classical mechanics}.Springer Verlag, 1989.

\bibitem[Ba14]{Ba14}  A. Bautista, A. Ibort, J. Lafuente.  \textit{On the space of light rays of a spacetime and a reconstruction theorem by Low}. Cllass. Quantum Grav., \textbf{31} (2014) 075020.

\bibitem[Ba15]{Ba15}  A. Bautista, A. Ibort, J. Lafuente. \textit{Causality and skies: is non-refocussing necessary?}. Class. Quantum Grav., \textbf{32} (2015) 105002.  Doi:10.1088/0264-9381/32/10/105002.

\bibitem[Ba15b]{Ba15b}  A. Bautista. \textit{Causality, light rays and skies}. Ph. D. Thesis (2015).

\bibitem[BE96]{BE96} J.K. Beem, P.E.Ehrlich, K.L.Easley. \emph{Global Lorentzian Geometry}. Marcel Dekker, 1996.

\bibitem[Be03]{Be03} A.N. Bernal, M. S\'{a}nchez. \emph{On smooth Cauchy hypersurfaces and Geroch's splitting theorem}. Commun. Math. Phys. 243, 2003, 461-470.

\bibitem[Br93]{Br93} G.E. Bredon. \emph{Topology and Geometry}. Springer-Verlag, 1993.

\bibitem[Bri70]{Bri70} F. Brickell,  R.S. Clark. \emph{Differentiable manifolds. An Introduction}. Van Nostrand Reinhold, 1970.

\bibitem[Ca01]{Ca01} A. Cannas da Silva. \emph{Lectures on symplectic geometry}. Springer-Verlag, 2001.

\bibitem[Ca14]{Ca14}  J.F. Cari\~nena, A. Ibort, G. Marmo, G. Morandi. \textit{Geometry from dynamics: Classical and Quantum}.  Springer-Verlag (2014).

\bibitem[Ch10]{Ch10} V. Chernov, S. Nemirovski. \emph{Legendrian Links,
Causality, and the Low Conjecture}. Geom. Funct. Analysis, \textbf{19} (5)
1320-1333 (2010).

\bibitem[G08]{G08} H. Geiges. \emph{An introduction to contact topology}. Cambridge University Press, 2008.

\bibitem[HE73]{HE73} S.W. Hawking, G.F.R. Ellis. \emph{The large scale structure of space-time}. Cambridge University Press, Cambridge, 1973.

\bibitem[KT09]{KT09} B. Khesin, S. Tabachnikov. \emph{Pseudo-riemannian geodesics and billiards}. Adv. Math. 221, 2009, 1364-1396.

\bibitem[LM87]{LM87} P. Libermann, C.M. Marle. \emph{Symplectic geometry and analytical mechanic}. D. Reidel Publishing Company, Dordrecht, 1987.

\bibitem[Lo88]{Lo88} R. J. Low. \emph{Causal relations and spaces of null geodesics}. PhD Thesis, Oxford University (1988).

\bibitem[Lo89]{Lo89} R. J. Low. \emph{The geometry of the space of null geodesics}. J. Math. Phys. 30(4) (1989), 809-811.

\bibitem[Lo90]{Lo90} R. J. Low. \emph{Twistor linking and causal relations}. Classical Quantum Gravity 7 (1990), 177-187.

\bibitem[Lo90-2]{Lo90-2} R. J. Low. \emph{Spaces of causal paths and naked singularities}. Classical Quantum Gravity 7 (1990), 943-954.

\bibitem[Lo93]{Lo93} R. J. Low. \emph{Celestial Spheres, Light Cones and Cuts}. J. Math. Phys. 34 (1993), no. 1, 315-319.

\bibitem[Lo94]{Lo94} R. J. Low. \emph{Twistor linking and causal relations in exterior Schwarzschild space}. Classical Quantum Gravity 11 (1994), 453-456.

\bibitem[Lo98]{Lo98} R. J. Low, \emph{Stable singularities of wave-fronts in general relativity}. J. Math. Phys. 39 (1998), 3332-3335.

\bibitem[Lo01]{Lo01} R. J. Low. \emph{The space of null geodesics}. Proc. Third World Congress of Nonlinear Analysts, Part 5 (Catania, 2000). Nonlinear Anal. 47, 2001, no. 5, 3005--3017.

\bibitem[Lo06]{Lo06} R. J. Low. \emph{The space of null geodesics (and a new causal boundary)}. Lecture Notes in Physics 692, 2006, pp. 35-50.

\bibitem[MW74]{MW74} J.E, Marsden, A. Weinstein. \emph{Reduction of symplectic manifolds with symmetry}, Rep. Mathematical Phys., 5, 1, 121-130, (1974).

\bibitem[Mi08]{Mi08} E. Minguzzi, M. S\'{a}nchez. \emph{The causal hierarchy
of spacetimes}, Zurich: Eur. Math. Soc. Publ. House, vol. H. Baum, D. Alekseevsky (eds.), Recent
developments in pseudo-Riemannian geometry of ESI Lect. Math. Phys.,
pages 299--358 (2008). arXiv:gr-qc/0609119.

\bibitem[Na04]{Na04}  J. Natario and P. Tod. \textit{Linking, Legendrian linking and causality}. Proc. London Math. Soc. (3) \textbf{88} 251--272 (2004).

\bibitem[On83]{On83} B. O'Neill. \emph{Semi-Riemannian geometry with
applications to Relativity}. Academic Press. New York, 1983.

\bibitem[Pe72]{Pe72} R. Penrose. \emph{Techniques of Differential Topology in Relativity}. Regional Conference Series in Applied Mathematics, SIAM, 1972.

\bibitem[Pe79]{Pe79} R. Penrose. \emph{Singularities and time-asymmetry}. General Relativity: An Einstein Centenary (S.W.Hawking \& W. Israel, eds.), Cambridge University Press, 1979.

\bibitem[Wa83]{Wa83} F.W. Warner. \emph{Foundations of differentiable manifolds and Lie groups}. Springer-Verlag, 1983.

\endDocument 


\subsection{The contact structure of $\mathcal{N}$ and the reduction of the space of scaled null geodesics $\mathcal{N}_s$}\label{sec:contact_Ns}

We will prove first that the space of light rays $\mathcal{N}$ is the base manifold of a principal bundle with structural group $\mathbb{R}^+$ whose total space is the space of scaled null geodesics $\mathcal{N}_s$.
Notice that the Euler vector field $\Delta$ on $\mathbb{N}^+$ is projectable under the map $\rho \colon \mathbb{N}^+ \to \mathcal{N}_s$. The projected vector field will be denoted by $\Delta_s$ and its flow is defined by $\Phi_t(\gamma_v) = \gamma_{tv}$.   Clearly $\rho(tv) = \gamma_{tv}$, $\rho\circ \varphi_t = \Phi_t\circ \rho$, hence $\rho_*\Delta = \Delta_s$.

\begin{lemma}\label{Ns-N-prinbundle}

The map $\pi:\mathcal{N}_s \rightarrow \mathcal{N}$ is a principal bundle with structural group the multiplicative group $\mathbb{R}^+$.
\end{lemma}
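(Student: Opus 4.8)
The plan is to exhibit $\pi\colon\mathcal{N}_{s}\to\mathcal{N}$ as the orbit map of a free smooth action of $\mathbb{R}^{+}$ and then to build local trivialisations out of the local spacelike Cauchy surfaces used in Section~\ref{sec:seccion-estruc-dif}. First I would fix the action: for $\lambda\in\mathbb{R}^{+}$ set $\lambda\cdot\gamma_{v}:=\gamma_{\lambda v}$. It is well defined, since if $\gamma_{v}=\gamma_{v'}$, i.e.\ $v'=\gamma'(s)$ for the geodesic with $\gamma'(0)=v$, then the geodesics issuing from $\lambda v$ and $\lambda v'$ are $t\mapsto\gamma(\lambda t)$ and $t\mapsto\gamma(\lambda t+s)$, which differ by the translation $s$. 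It is smooth, being the projection under $\rho$ of the fibrewise scaling $v\mapsto\lambda v$ of $\mathbb{N}^{+}$ (equivalently, with $\lambda=e^{t}$, the flow of the projected Euler field $\Delta_{s}$), and $\rho$ is a submersion. Its orbits are exactly the fibres of $\pi$, because two scaled null geodesics project to the same light ray precisely when they are affine reparametrisations of one another, that is, differ by a translation and a positive rescaling of the parameter; the translation has already been factored out in $\mathcal{N}_{s}$, so the residual ambiguity is one $\mathbb{R}^{+}$-orbit. In particular $\pi$ is surjective, and it is a submersion because $p_{\mathbb{N}^{+}}=\pi\circ\rho$ is one by Proposition~\ref{prop00150}.

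Second, I would prove the action is free, which is where causality enters. If $\lambda\cdot\gamma_{v}=\gamma_{v}$, then $t\mapsto\gamma(\lambda t)$ is a parameter-translate of $\gamma$, so $\gamma(\lambda t)=\gamma(t+s)$ for some $s\in\mathbb{R}$; at $t=0$ this gives $\gamma(s)=\gamma(0)$, which for the strongly causal (hence causal) spacetime $M$ forces $s=0$, and then $\gamma(\lambda t)=\gamma(t)$ for all $t$ yields $\lambda v=v$ upon differentiating at $0$, so $\lambda=1$.

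Third, local triviality. Fix $[\gamma_{0}]\in\mathcal{N}$ and take the chart neighbourhood $\mathcal{U}\ni[\gamma_{0}]$ of Section~\ref{sec:seccion-estruc-dif}, with its globally hyperbolic $V$, spacelike Cauchy surface $C\subset V$ and future timelike field $T$. The key step is that $\rho|_{\mathbb{N}^{+}(C)}\colon\mathbb{N}^{+}(C)\to\pi^{-1}(\mathcal{U})$ is a diffeomorphism: it maps onto $\pi^{-1}(\mathcal{U})$ because $\mathcal{U}=p_{\mathbb{N}^{+}}(\mathbb{N}^{+}(C))$ and $\mathbb{N}^{+}(C)$ is stable under fibrewise scaling; it is injective because a null geodesic, being a causal curve, meets the acausal Cauchy surface $C$ at most once; and it is a local diffeomorphism because $\ker d\rho$ is spanned by $X_{\mathbf{g}}$, whose value at $v\in\mathbb{N}^{+}(C)$ projects under $d\pi_{M}^{TM}$ to the null vector $v$, transverse to the spacelike $C$ and hence not tangent to $\mathbb{N}^{+}(C)$, so $d\rho_{v}$ is injective on the $(2m-2)$-dimensional $T_{v}\mathbb{N}^{+}(C)$ and therefore an isomorphism onto $T_{\rho(v)}\mathcal{N}_{s}$. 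Composing $\rho|_{\mathbb{N}^{+}(C)}$ with the diffeomorphism $\Omega^{T}(C)\times\mathbb{R}^{+}\to\mathbb{N}^{+}(C)$, $(\omega,\lambda)\mapsto\lambda\omega$ (whose inverse $v\mapsto\bigl(-v/\mathbf{g}(v,T),\,-\mathbf{g}(v,T)\bigr)$ is smooth since $-\mathbf{g}(v,T)>0$ for future null $v$), and with the chart diffeomorphism $\mathcal{U}\simeq\Omega^{T}(C)$ in the chain \ref{cadena-espacios}, I obtain an $\mathbb{R}^{+}$-equivariant diffeomorphism $\Psi\colon\mathcal{U}\times\mathbb{R}^{+}\to\pi^{-1}(\mathcal{U})$ over $\mathcal{U}$, namely $\Psi([\gamma],\lambda)=\lambda\cdot s_{\mathcal{U}}([\gamma])$ with $s_{\mathcal{U}}=\rho\circ\bigl(p_{\mathbb{N}^{+}}|_{\Omega^{T}(C)}\bigr)^{-1}$ a smooth section of $\pi$. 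These trivialisations, together with the freeness of the action, show that $\pi$ is a principal $\mathbb{R}^{+}$-bundle. The hard part will be the diffeomorphism claim for $\rho|_{\mathbb{N}^{+}(C)}$: deducing bijectivity from the ``met at most once'' property of Cauchy surfaces inside the causally convex $V$, and upgrading the smooth bijection to a diffeomorphism via the spray transversality and the dimension count; throughout, the subtlety to keep in mind is that $\mathcal{N}_{s}$ genuinely records the scale of the affine parameter, which is precisely what makes the action free and its orbits the fibres of $\pi$.
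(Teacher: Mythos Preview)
Your argument is correct but follows a genuinely different route from the paper. The paper establishes the principal-bundle structure by showing that the $\mathbb{R}^{+}$-action $\Phi(t,\gamma_{v})=\gamma_{tv}$ is free and \emph{proper}, and then invokes the general theorem that a free proper Lie-group action gives a principal bundle over the orbit space. You instead prove freeness and then build explicit local trivialisations out of the Cauchy-surface charts of Section~\ref{sec:seccion-estruc-dif}, via the diffeomorphism $\rho|_{\mathbb{N}^{+}(C)}\colon\mathbb{N}^{+}(C)\to\pi^{-1}(\mathcal{U})$ and the splitting $\mathbb{N}^{+}(C)\cong\Omega^{T}(C)\times\mathbb{R}^{+}$.

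What each approach buys: the paper's properness argument is shorter and chart-free, but it is somewhat informal (it passes from convergence of classes $\gamma_{v_{n}}\to\gamma_{v}$ in $\mathcal{N}_{s}$ to convergence of representative vectors $v_{n}\to v$ without justification). Your approach is longer but entirely explicit, and it produces the local section $s_{\mathcal{U}}=\rho\circ\bigl(p_{\mathbb{N}^{+}}|_{\Omega^{T}(C)}\bigr)^{-1}$ that the paper needs anyway in the very next result (Proposition~\ref{prop-theta_0}) to define $\theta_{0}=\sigma^{*}\bar{\theta}$. Your freeness argument also makes visible the role of strong causality (no closed null geodesics, hence $\gamma(s)=\gamma(0)\Rightarrow s=0$), which the paper's one-line ``$\gamma_{tv}=\gamma_{v}\Rightarrow tv=v$'' leaves implicit.

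One tiny slip: in your well-definedness check, the two reparametrised geodesics $t\mapsto\gamma(\lambda t)$ and $t\mapsto\gamma(\lambda t+s)$ differ by the translation $s/\lambda$, not $s$; the conclusion is unaffected.
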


\begin{proof}
We will show that the flow $\Phi:\mathbb{R}^+\times \mathcal{N}_s \rightarrow \mathcal{N}_s$ defined by the vector field $\Delta_s \in\mathfrak{X}\left(\mathcal{N}_s\right)$ defines a free and proper right action.
It is clear that $\Phi$ is a right action and it can be written by
\[
\Phi\left(t,\gamma_v\right)=\gamma_{tv}
\]
where $\gamma_u$ denotes the null geodesic defined by the null vector $u\in \mathbb{N}^+$.
It is well-known that for any non-zero $\lambda\in \mathbb{R}$ it is verified that $\gamma_{\lambda u}\left(s\right)=\gamma_u\left(\lambda s\right)$.
Then, the equality $\Phi\left(t,\gamma_v\right)=\gamma_v$ implies that $tv=v$, whence $t=1$, and the action is free.

Now, consider two sequences $\left\{\gamma_{v_n}\right\}$ and $\left\{\Phi_{t_n}\left(\gamma_{v_n}\right)\right\}$ converging to $\gamma_v$ and $\gamma_u$ respectively.
Since $\Phi_{t_n}\left(\gamma_{v_n} \right) = \gamma_{t_n v_n}$ and again $\gamma_{t_n v_n}\left(s\right)=\gamma_{v_n}\left(t_n s\right)$
then $\gamma_u$ and $\gamma_v$ have the same image as a parametrized curve in $M$, then for every $s$ we have  $\gamma_u\left(s\right)=\gamma_{v}\left(\overline{t}s\right)$ for some $\overline{t}\in \mathbb{R}^+$.
So we have that $u=\overline{t}v$ and hence $t_n v_n \mapsto \overline{t}v$.
Since $v\neq 0$ and $\left\{
\begin{tabular}{l}
$v_n \mapsto v$ \\
$t_n v_n \mapsto \overline{t}v$
\end{tabular}
\right.$ then we have that $t_n \mapsto \overline{t}$.
This shows that the action $\Phi$ is proper.
Then we have that $\pi:\mathcal{N}_s \rightarrow \mathcal{N}$, $\pi (\gamma_v) = [\gamma]$, is a principal bundle with structural group $\mathbb{R}^+$.
\hfill$\Box$\bigskip

\end{proof}

The following theorem shows that the distribution of hyperplanes defined by the canonical 1-form $\bar{\theta}$ actually projects down to $\mathcal{N}$ defining a canonical contact structure.  First notice that since $\pi^*(\mathcal{L}_{\Delta_s} \overline{\omega}) =  \mathcal{L}_\Delta \pi^*\overline{\omega} = i^*\mathcal{L}_\Delta \omega = \pi^*\overline{\omega}$, then
$$
\mathcal{L}_{\Delta_s} \overline{\omega} = \overline{\omega} \, ,
$$
and $\Delta_s$ is a Liouville vector field. Then consider the 1--form
\begin{equation}\label{bartheta}
\overline{\theta} = -i_{\Delta_s}\overline{\omega} \, ,
\end{equation} in $\mathcal{N}_s$.   Clearly $i_{\Delta_s}\bar{\theta} = 0$ and
$d\bar{\theta} = - \bar{\omega}$.

\begin{theorem}\label{theo-contact-structure}
The 1-form $\bar{\theta}$ in $\mathcal{N}_s$ induces a distribution of hyperplanes $\overline{\mathcal{H}} = \ker \bar{\theta}$.  Moreover there exists a distribution of hyperplanes $\mathcal{H}$ in $\mathcal{N}$ such that $\pi_*(\overline{H}) = \mathcal{H}$ that defines a contact structure.  Hence $\mathcal{N}$ is equipped with a canonical contact structure.
\end{theorem}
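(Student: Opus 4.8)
The plan is to obtain $\mathcal{H}$ as the reduction of the hyperplane distribution $\overline{\mathcal{H}}=\ker\bar\theta$ along the principal $\mathbb{R}^{+}$--bundle projection $\pi\colon\mathcal{N}_s\to\mathcal{N}$ of Lemma \ref{Ns-N-prinbundle}, and then to read off the contact property directly from Theorem \ref{contact_red}. Two preliminary points have to be settled first: that $\bar\theta$ is a nowhere vanishing $1$--form (so that $\overline{\mathcal{H}}$ really is a distribution of hyperplanes), and that $\overline{\mathcal{H}}$ descends to $\mathcal{N}$.

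For the first point I would argue that $\bar\theta=-i_{\Delta_s}\bar\omega$ with $\bar\omega$ nondegenerate, so it suffices that $\Delta_s$ be nowhere zero; but $\Delta_s=\rho_*\Delta$, and on $\mathbb{N}^{+}$ the Euler field $\Delta$ is nowhere zero (there is no zero section in $\mathbb{N}^{+}$) and is not tangent to the characteristic foliation $\mathcal{K}=\mathrm{span}\{X_{\mathbf{g}}\}$, since $X_{\mathbf{g}}$ and $\Delta$ span the $2$--dimensional distribution $\mathcal{D}$. Hence $\rho_*\Delta\neq 0$, the form $\bar\theta$ is nowhere zero, and $\overline{\mathcal{H}}=\ker\bar\theta\subset T\mathcal{N}_s$ is a distribution of hyperplanes, of dimension $(2m-2)-1=2m-3$.

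For the descent I would show that $\overline{\mathcal{H}}$ contains the vertical subbundle $\ker\pi_*$ and is invariant under the $\mathbb{R}^{+}$--action. The fibres of $\pi$ are the orbits of the flow $\Phi_t$ of $\Delta_s$, so $\ker\pi_*$ is spanned by $\Delta_s$; and $\bar\theta(\Delta_s)=-\bar\omega(\Delta_s,\Delta_s)=0$ shows $\ker\pi_*\subset\overline{\mathcal{H}}$. For invariance, using $i_{\Delta_s}\bar\theta=0$ and $d\bar\theta=-\bar\omega$ one gets $\mathcal{L}_{\Delta_s}\bar\theta=i_{\Delta_s}d\bar\theta+d(i_{\Delta_s}\bar\theta)=-i_{\Delta_s}\bar\omega=\bar\theta$, hence $\Phi_t^{*}\bar\theta=e^{t}\bar\theta$ and $(\Phi_t)_*\overline{\mathcal{H}}=\overline{\mathcal{H}}$. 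A hyperplane field on the total space of a principal bundle which is invariant and contains the vertical line descends, via $d\pi$, to a well-defined smooth hyperplane field $\mathcal{H}:=\pi_*\overline{\mathcal{H}}$ on $\mathcal{N}$ (smoothness using local sections, which exist because $\pi$ is a principal bundle; codimension $1$ because $\overline{\mathcal{H}}$ contains a full fibre direction), of dimension $(2m-3)-1=2m-4=\dim\mathcal{N}-1$.

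With $\overline{\mathcal{H}}$ descended, Theorem \ref{contact_red} applies with $(P,\omega,\theta)=(\mathcal{N}_s,-\bar\omega,\bar\theta)$ --- note $-\bar\omega=d\bar\theta$ is symplectic and $\ker\bar\theta=\overline{\mathcal{H}}$ --- and to the submersion $\pi$, which has $\dim\mathcal{N}=\dim\mathcal{N}_s-1$ and projects $\overline{\mathcal{H}}$ onto $\mathcal{H}$; it yields at once that $\mathcal{H}$ is a contact structure on $\mathcal{N}$. (Equivalently, one may pick a local section $\sigma$ of $\pi$, put $\alpha=\sigma^{*}\bar\theta$ so that $\mathcal{H}=\ker\alpha$ locally, and observe that $d\alpha=-\sigma^{*}\bar\omega$ restricts nondegenerately to $\mathcal{H}$, because $\bar\omega$ restricted to the coisotropic hyperplane $\overline{\mathcal{H}}$ has radical exactly $\overline{\mathcal{H}}^{\perp}=\mathrm{span}\{\Delta_s\}=\ker\pi_*$ while $\sigma_*$ identifies $\mathcal{H}_{\gamma}$ with a complement of that line in $\overline{\mathcal{H}}$; then Lemma \ref{lemma-non-degen} finishes the argument.) I expect the only step needing genuine care to be the descent --- checking simultaneously the invariance of $\overline{\mathcal{H}}$ and that it contains the vertical directions --- after which the contact property is delivered by the coisotropic--reduction mechanism with no further computation. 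As a concluding remark I would note that $\rho^{*}\bar\theta=i^{*}\theta_{\mathbf{g}}$ (from $\rho^{*}\bar\omega=i^{*}\omega_{\mathbf{g}}$ and $i_{\Delta}\omega_{\mathbf{g}}=-\theta_{\mathbf{g}}$), so that $\mathcal{H}$ is precisely the distribution obtained in Section \ref{sec:contact-structure-1} by pushing $\ker(\theta_{\mathbf{g}}|_{\mathbb{N}^{+}})$ down along $p_{\mathbb{N}^{+}}=\pi\circ\rho$; in particular $\mathcal{H}$ is independent of the auxiliary metric chosen in the conformal class $\mathcal{C}$ (using Lemma \ref{JJbar}), which justifies calling it canonical.
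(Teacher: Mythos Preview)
Your proof is correct and follows essentially the same route as the paper: compute $\mathcal{L}_{\Delta_s}\bar\theta$ to see that $\overline{\mathcal{H}}=\ker\bar\theta$ is invariant under the $\mathbb{R}^{+}$--action, note that $\ker\pi_*=\mathrm{span}\{\Delta_s\}\subset\overline{\mathcal{H}}$, and then apply Theorem~\ref{contact_red}. You are in fact more careful than the paper on two small points --- you explicitly check that $\bar\theta$ is nowhere zero (via $\Delta_s\neq 0$), and your sign $\mathcal{L}_{\Delta_s}\bar\theta=\bar\theta$ is the correct one (the paper writes $-\bar\theta$, though the discrepancy is harmless for the invariance of the kernel).
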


\begin{proof}   Because of Eq. (\ref{bartheta}), $\mathcal{L}_{\Delta_s} \bar{\theta} = i_{\Delta_s} d\bar{\theta} = -\bar{\theta}$, then $(\Phi_t)^*\bar{\theta} = t \bar{\theta}$.   Then at each point $[\gamma] \in \mathcal{N}$, the hyperplanes obtained by projecting the hyperplanes $\mathcal{H}_{\gamma_{tv}}$ are the projection of the kernels of the family of proportional covectors $t\bar{\theta}_v$, hence they are the same.
Then because of Thm. \ref{contact_red}, this implies that $\mathcal{H}$ is a contact structure in $\mathcal{N}$.
\hfill$\Box$\bigskip

\end{proof}

\begin{proposition}\label{prop-theta_0}
Let $\pi:\mathcal{N}_s \rightarrow \mathcal{N}$ be the principal bundle of lemma \ref{Ns-N-prinbundle}, then there exists a 1--form $\theta_0$ in $\mathcal{N}$ such that $\mathcal{H} = \ker\theta_0$ and the contact structure is coorientable.
\end{proposition}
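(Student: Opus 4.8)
The plan is to exploit that $\pi\colon\mathcal{N}_s\to\mathcal{N}$ is a principal bundle with structure group $\mathbb{R}^+$, hence trivial, and to obtain $\theta_0$ by pulling $\overline{\theta}$ back along a global section. First I would produce a smooth global section $\sigma\colon\mathcal{N}\to\mathcal{N}_s$ of $\pi$. Choosing an open cover $\{U_i\}$ of $\mathcal{N}$ trivializing the bundle gives local sections $\sigma_i$ and transition functions $g_{ij}\colon U_i\cap U_j\to\mathbb{R}^+$ with $\sigma_j=\sigma_i\cdot g_{ij}$ and $g_{ij}g_{jk}=g_{ik}$. Since $\mathbb{R}^+$ is abelian and $\log$ identifies it with $(\mathbb{R},+)$, the functions $\log g_{ij}$ form an additive $1$--cocycle; picking a partition of unity $\{\chi_k\}$ subordinate to $\{U_k\}$ and setting $h_i=\sum_k\chi_k\log g_{ki}$ yields smooth functions with $h_j-h_i=\log g_{ij}$ on overlaps, so that the rescaled sections $\widetilde{\sigma}_i=\sigma_i\cdot e^{-h_i}$ coincide on overlaps and glue to a global section $\sigma$. (Alternatively, one may simply invoke that a principal bundle with contractible structure group over a paracompact base is trivial.)

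Next I would set $\theta_0=\sigma^{*}\overline{\theta}\in\mathfrak{X}^{*}\left(\mathcal{N}\right)$ and verify that $\ker\theta_0=\mathcal{H}$. Fix $\gamma\in\mathcal{N}$, put $w=\sigma(\gamma)\in\mathcal{N}_s$, and take $v\in T_{\gamma}\mathcal{N}$, so that $\theta_0(v)=\overline{\theta}_w(\sigma_{*}v)$. If $v\in\mathcal{H}_{\gamma}=\pi_{*}(\overline{\mathcal{H}}_w)$ (by Thm.~\ref{theo-contact-structure}), write $v=\pi_{*}u$ with $u\in\overline{\mathcal{H}}_w=\ker\overline{\theta}_w$; then $u-\sigma_{*}v$ lies in $\ker\pi_{*}(w)$, hence is a multiple of $\Delta_s(w)$, and since $i_{\Delta_s}\overline{\theta}=0$ we get $\overline{\theta}_w(\sigma_{*}v)=\overline{\theta}_w(u)=0$, i.e. $v\in\ker\theta_0$. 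Conversely, if $\theta_0(v)=0$ then $\sigma_{*}v\in\ker\overline{\theta}_w=\overline{\mathcal{H}}_w$, so $v=\pi_{*}\sigma_{*}v\in\pi_{*}(\overline{\mathcal{H}}_w)=\mathcal{H}_{\gamma}$. Thus $\mathcal{H}=\ker\theta_0$ with $\theta_0$ a globally defined $1$--form on $\mathcal{N}$ (nowhere vanishing, since $\overline{\theta}_w\neq 0$ and $\operatorname{im}\sigma_{*}(w)$ is a complement to $\langle\Delta_s(w)\rangle\subset\ker\overline{\theta}_w$), so the contact structure is coorientable; that $\theta_0$ is in addition a contact form follows from Lemma~\ref{lemma-non-degen} together with the fact, already established, that $\mathcal{H}$ is maximally non--integrable.

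I do not expect a genuine obstacle here: the two points requiring attention are the triviality of the $\mathbb{R}^+$--bundle, which is the routine partition--of--unity argument above (legitimate because $\mathcal{N}$ is a paracompact manifold under the standing hypotheses), and the identity $\ker\sigma^{*}\overline{\theta}=\mathcal{H}$, which is the short linear computation just given and rests only on $i_{\Delta_s}\overline{\theta}=0$ and on $\pi_{*}(\overline{\mathcal{H}})=\mathcal{H}$ from Theorem~\ref{theo-contact-structure}.
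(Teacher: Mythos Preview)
Your proof is correct and follows essentially the same route as the paper: invoke triviality of the $\mathbb{R}^+$--bundle to obtain a global section $\sigma$, and set $\theta_0=\sigma^{*}\overline{\theta}$. The paper's proof is in fact much terser---it simply cites contractibility of the structure group and asserts $\ker\theta_0=\mathcal{H}$ without further argument---so your explicit partition--of--unity construction of $\sigma$ and your verification of $\ker\sigma^{*}\overline{\theta}=\mathcal{H}$ via $i_{\Delta_s}\overline{\theta}=0$ are additional details that the paper omits.
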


\begin{proof}  Because the structure group of the principal bundle $\mathcal{N}_s \to \mathcal{N}$ is contractible, then is trivial. Hence there exists a smooth global section $\sigma$, then we can define the 1-form $\theta_0 = \sigma^*\bar{\theta}$ and, clearly, $\ker\theta_0 = \mathcal{H}$.
\hfill$\Box$\bigskip

\end{proof}

Notice that the bundle $\mathcal{N}_s \to \mathcal{N}$ is trivial because the group $\mathbb{R}^+$ is contractible, however if we were considering the space of non-oriented (future or past) unparametrized null geodesics instead, such space would be obtained by quotienting $\mathcal{N}_s$ with respect to the group $\mathbb{R}_* = \mathbb{R}-\{Ê0 \}$ which is not contractible and the corresponding contact structure will not be coorientable.

It is also noticeable, that like in the construction at the end of Sect. \ref{sec:contact}, the 1-form $\theta_0$ describing the canonical contact structure on $\mathcal{N}$ is not canonically determined (even if the hyperplane distribution is), in former case the 1-form $\theta_0$ depends on the choice of a partition of the unity, in the present case, it depends on a section of a principal bundle.

\bigskip

Next, we will look for an expression for the local contact forms defining the contact structure $\mathcal{H}\subset T\mathcal{N}$.
Recall that a coordinate chart $\psi:\mathcal{U}\subset \mathcal{N}\rightarrow \mathbb{R}^{2m-3}$ can be defined via the diffeomorphism $\mathcal{U}\rightarrow \Omega^{T}\left(C\right)$ of diagram \ref{cadena-espacios}, where $\Omega^{T}\left(C\right)$ is an embedded submanifold of $TV\subset TM$ with $V\subset M$ a globally hyperbolic and causally convex open set with Cauchy surface $C$.
Then we have the following diagram
\begin{equation}\label{diagrama008}
\begin{tikzpicture}[every node/.style={midway}]
\matrix[column sep={9em,between origins},
        row sep={3em}] at (0,0)
{  \node(N) {$\mathcal{N}\supset \mathcal{U}$}; & \node(N1) {$TV$}; \\
   \node(B) {$\mathbb{R}^{2m-3}\supset B_0$}; & \node(B1) {$B\subset \mathbb{R}^{2m}$};   \\};
\draw[->] (N) -- (N1) node[anchor=south]  {$\overline{z} $};
\draw[->] (B) -- (B1) node[anchor=north]  {$z$};
\draw[->] (N)   -- (B) node[anchor=east] {$\psi$};
\draw[->] (N1)   -- (B1) node[anchor=west] {$\phi$};
\end{tikzpicture}
\end{equation}
where $\overline{z}=\phi^{-1}\circ z \circ \psi $.
The image of the embedding $\overline{z} $ is contained in $\mathbb{N}^{+}\left(C\right)$, and moreover if $p_{\mathbb{N}}:\mathbb{N}\rightarrow \mathcal{N}$ is the canonical projection, then $p_{\mathbb{N}}\circ \overline{z}\left(\left[\gamma\right]  \right)=\left[\gamma\right]$ for all $\left[\gamma\right]\in \mathcal{U}\subset\mathcal{N}$.
Then $\overline{z}$ is a local section of $p_{\mathbb{N}}$.

By proposition \ref{prop-theta_0} and theorem \ref{theo-contact-structure}, we have that for $\xi \in T\mathbb{N}$
\[
\theta_{\alpha}\left(\xi\right) = \left( \theta_0 \right)_{p_{\mathbb{N}^{*}}\left(\alpha\right)}\left( \left( dp_{\mathbb{N}^{*}} \right)_{\alpha}\left(\xi\right) \right)
\]
then, by diagram in \ref{diagrama006}, $p_{\mathbb{N}}=p_{\mathbb{N}^{*}}\circ \widehat{\mathbf{g}}$, and hence we can write for $J\in T_{\left[\gamma\right]}\mathcal{N}\subset T\mathcal{U}$
\[
\theta_{\widehat{\mathbf{g}}\circ \overline{z}\left(\left[\gamma\right] \right)}\left( d\left( \widehat{\mathbf{g}}\circ \overline{z}\right)_{\left[\gamma\right]}\left(J \right)  \right) = \left( \theta_0 \right)_{\left[\gamma\right]}\left(J\right)
\]
On the other hand, by definition of the tautological 1--form $\theta$ and since $\pi^{TM}_{M}=\pi^{T^{*}M}_{M}\circ \widehat{\mathbf{g}}$, we have
\begin{align*}
\theta_{\widehat{\mathbf{g}}\circ \overline{z}\left(\left[\gamma\right] \right)}\left(  d\left( \widehat{\mathbf{g}}\circ \overline{z}\right)_{\left[\gamma\right]}\left(J \right)  \right) &= \widehat{\mathbf{g}}\circ \overline{z}\left(\left[\gamma\right] \right) \left(  d\left( \widehat{\mathbf{g}}\circ \overline{z}\right)_{\left[\gamma\right]}\left(J \right)  \right) = \\
&=\mathbf{g}\left( \left(d\pi^{T^{*}M}_{M}\right)_{\widehat{\mathbf{g}}\circ\overline{z}\left(  \left[\gamma\right] \right)}\left( d\left( \widehat{\mathbf{g}}\circ \overline{z}\right)_{\left[\gamma\right]}\left(J \right) \right), \overline{z}\left(\left[\gamma\right] \right)\right) =  \\
&= \mathbf{g}\left( \left(d\pi^{TM}_{M}\right)_{\overline{z}\left(  \left[\gamma\right] \right)}\left( d\overline{z}_{\left[\gamma\right]}\left(J \right) \right), \overline{z}\left(\left[\gamma\right] \right)\right)= \\
&=\mathbf{g}\left( J\left(0 \right), \gamma'\left(0\right)\right)
\end{align*}
where $\overline{z}\left(\left[\gamma\right] \right)\in \mathbb{N}^{+}\left(C\right)$ is a vector defining the light ray $\left[\gamma\right]\in \mathcal{U}$, so we have considered the null geodesic $\gamma$ such that $\gamma'\left(0\right)=\overline{z}\left(\left[\gamma\right] \right)$.
On the other hand, observe that if $J=\Gamma'\left(0\right)\in T_{\left[\gamma\right]}\mathcal{N}$ where $\Gamma$ is a smooth curve in $\mathcal{N}$ with $\Gamma\left(0\right)=\left[\gamma\right]$, then
\[
\left(d\pi^{TM}_{M}\right)_{\overline{z}\left(  \left[\gamma\right] \right)}\left( d\overline{z}_{\left[\gamma\right]}\left(J \right) \right)=
\left(d\pi^{TM}_{M}\right)_{\overline{z}\left(  \left[\gamma\right] \right)}\left( d\overline{z}_{\left[\gamma\right]}\left(\Gamma'\left(0\right) \right) \right)= \left(\pi^{TM}_{M}\circ \overline{z} \circ \Gamma \right)'\left(0 \right)
\]
where $\pi^{TM}_{M}\circ \overline{z} \circ \Gamma$ is the curve in $M$ where $\overline{z} \circ \Gamma\subset \mathbb{N}\left(C\right)$ rest.
By lemma \ref{lemmaDC92}, we have that $\left(\pi^{TM}_{M}\circ \overline{z} \circ \Gamma \right)'\left(0 \right)=J\left(0\right)$ and therefore we claim that
\[
\left( \theta_0 \right)_{\left[\gamma\right]}\left(J\right) = \mathbf{g}\left( J\left(0 \right), \gamma'\left(0\right)\right)
\]
and then
\[
J\in \mathcal{H} \Longleftrightarrow \mathbf{g}\left( J\left(0 \right), \gamma'\left(0\right)\right)=0.
\]
It is clear that this characterization does not depends neither on the representative metric of the conformal class $\mathcal{C}$ nor on the parametrization of $\gamma$ in virtue of lemma \ref{lem00215}.

Again, since the expression of the local 1--form $\theta_{0}$ defining the contact structure $\mathcal{H}$ coincides with the one constructed in section~\ref{sec:contact-structure-1}, then the same used argument to show that $\mathcal{H}$ is cooriented remains valid.